\documentclass[reqno,11pt]{amsart}
\usepackage{bbm}
\usepackage{url}
\usepackage{soul}
\usepackage{epstopdf}
\usepackage[ruled]{algorithm}
\usepackage[a4paper,bindingoffset=0.5cm,left=2cm,right=2cm,top=2.5cm,bottom=2cm,footskip=.8cm]{geometry}

\usepackage{stmaryrd}
\usepackage{caption}
\usepackage{pgfplots}
\usepackage{grffile}
\newlength\figureheight
\newlength\figurewidth
\pgfplotsset{%
        tick label style={font=\scriptsize},
        label style={font=\footnotesize},
        legend style={font=\footnotesize},
        every axis plot/.append style={very thick}
}

 \renewcommand{\leq}{\leqslant}
\renewcommand{\geq}{\geqslant}

\DeclareMathOperator*{\argmin}{arg\,min}

\allowdisplaybreaks

\makeatletter
\renewcommand{\fnum@figure}[1]{\textbf{\figurename~\thefigure}. }
\renewcommand{\fnum@table}[1]{\textbf{\tablename~\thetable}. }
\makeatother

\usepackage[utf8]{inputenc}
\usepackage{graphicx}
\usepackage{subfig}
\usepackage{type1cm}        
\usepackage{multicol}        
\usepackage[bottom]{footmisc}

\usepackage{newtxtext}       %
\usepackage{newtxmath}       
\usepackage{algorithm, algpseudocode, amsmath}

\newcommand{\vb}{\vspace{3.2mm}}

\usepackage[british,UKenglish,USenglish,english,american,dutch]{babel} 
\usepackage{mathrsfs}
\usepackage{lipsum}
\usepackage{verbatim}
\usepackage{amsmath, amsfonts, amsthm,mathtools}
\usepackage{syntonly}
\usepackage{float}
\usepackage{subcaption}
\usepackage{enumitem}
\usepackage{hyperref}

\usepackage{bbm}
\usepackage{eufrak}
\usepackage{tikz-cd} 
\usepackage{adjustbox}

\newtheorem{lemma}{Lemma}
\newtheorem{corollary}{Corollary}
\newtheorem{assumptions}{Assumptions}
\newtheorem{assumption}[assumptions]{Assumption}

\newtheorem{theorem}{Theorem}
\newtheorem{remark}{Remark}
\newtheorem{example}{Example}
\newtheorem{definition}{Definition}
\newtheorem{proposition}{Proposition}

\usepackage{hyperref}
\usepackage{multirow}
\usepackage{gensymb}

\begin{document}
\selectlanguage{USenglish}

        \title[Graphon Hawkes Processes: Sparse Estimation and Network Recovery
        ]{Graphon Hawkes Processes: Sparse Estimation and Network Recovery}       
\author{Justin R. Baars, Roger J.~A. Laeven, and Michel R.~H. Mandjes}
       
\begin{abstract}
Learning interaction structures from spatiotemporal event data is a key statistical problem in fields such as neuroscience, epidemiology, and seismology. 
We study this problem for Hawkes processes on a continuous spatial domain, where  events may trigger future events at different locations. 
Following the framework of \cite{Baars2024}, 
we model the spatial interaction mechanism using an underlying graphon that characterizes how the intensities' excitation strength varies across locations. 
By expanding the baseline intensity, temporal excitation kernel and graphon in orthogonal bases, we represent the problem as a high-dimensional parametric inference problem. 
We propose a sparse penalized least-squares estimator that recovers the effective interaction structure while controlling model complexity. 
Our analysis establishes non-asymptotic error bounds in a high-dimensional regime where the number of parameters may grow with the observation horizon, without requiring the conditional intensity to be almost surely bounded. 
We further establish sparsistency results, showing that the latent interaction network can be consistently recovered. 
Simulation studies demonstrate the good finite sample performance of the proposed method and an application to large-scale neuronal spike data, comprising 1.81 million events, showcases its practical effectiveness.

\vb

\noindent
 {\sc Keywords.} Spatiotemporal point processes $\circ$ Hawkes processes $\circ$ Graphons $\circ$ Sparse estimation $\circ$ Non-asymptotic error bound $\circ$ Selection consistency

\vb

\noindent
{\sc Affiliations.} 
JB and RL are with the Dept.~of Quantitative Economics, University of Amsterdam. 
RL is also with E{\sc urandom}, Eindhoven University of Technology, and with C{\sc ent}ER, Tilburg University. 
MM is with the Mathematical Institute, Leiden University, and is also affiliated with the Korteweg-de Vries Institute for Mathematics, University of Amsterdam; E{\sc urandom}, Eindhoven University of Technology, Eindhoven; Amsterdam Business School, University of Amsterdam. 
The research of JB and RL is funded in part by the Netherlands Organization for Scientific Research under an NWO VICI grant (2020--2027).
The research of MM is funded in part by the NWO Gravitation project N{\sc etworks}, grant number 024.002.003.

\vb

\noindent
{\sc Email.} \url{j.r.baars@uva.nl},\  \url{r.j.a.laeven@uva.nl}, and\  \url{m.r.h.mandjes@math.leidenuniv.nl}.

\vb

\noindent
\textit{Date}: \today.

\end{abstract}

\maketitle
 

\section{Introduction}
Large-scale point process data arise in many scientific applications where events occur in spatiotemporal domains. 
In such settings, events are seldomly stochastically independent: the occurrence of one event may affect the likelihood of future events along the temporal and spatial dimensions. 
Identifying these interaction mechanisms, and the latent network structure that governs them, has become an important statistical task.
A widely used framework for modeling such interactions is the Hawkes process (\cite{Hawkes, Hawkes2}). 
In a multivariate Hawkes process, each component point process represents a node in a network, and past events in one component may influence the future intensities of other components through a system of interaction kernels. 
Because of its flexibility and interpretability, the multivariate Hawkes process has been applied in numerous areas, including neuronal spike recordings 
(\cite{Bonnet2022, Bonnet2023, Zhang2016}), 
financial (transaction) data (\cite{ACL15, Boswijk2026, finance2, Large}), seismic activity (\cite{seismology, Ikefuji}), and the spread of infectious diseases (\cite{covid}). 
In these applications, the key object of interest is often the interaction structure itself: A directed edge between two nodes in the network indicates that events in one component influence the future occurrence rate in another.

Despite the success of multivariate Hawkes processes, most existing models rely on a finite collection of component processes and therefore implicitly assume that the spatial domain has been discretized. 
However, in many applications, such as neuroscience, epidemiology, and seismology, the events occur in a genuinely continuous spatial domain and the interaction strength between two locations varies smoothly with their distance or other spatial characteristics. 
Discretizing space in such situations  introduces modeling artifacts and may obscure important structural features of the interaction mechanism. 
Moreover, increasing the spatial resolution in order to reduce discretization error quickly leads to high-dimensional models.

In this work, we study the estimation of the latent interaction structure in spatiotemporal point processes defined on a continuous spatial domain. 
Our starting point is the graphon Hawkes process framework introduced by \cite{Baars2024}, in which the connectivity structure of the process is described by a graphon. 
Let $(\mathscr X,\mathscr A,\mu)$ be a measure space representing the spatial domain and suppose that we observe a time-ordered point pattern $\omega_T=\{(t_i,x_i)\}_{i\in[n(T)]}$ consisting of $n(T)$ events in the observed time window $[0,T]$, where $t_i\in[0,T]$ denotes the event time and $x_i\in\mathscr X$ the spatial location. 
The data are assumed to arise from a spatiotemporal point process $N$ with conditional intensity density
\begin{equation}
\lambda(t,x):=\nu(x)+\sum_{t_i<t}W(x,x_i)h(t-t_i)=\nu(x)+\int_{(-\infty,t)\times\mathscr X}W(x,y)h(t-s)\,N(\mathrm ds,\mathrm dy).\end{equation}
Here, $\nu$ is a baseline intensity on $\mathscr X$, $h$ is a temporal excitation kernel, and $W$ is a graphon describing the strength of the interaction between spatial locations. 
Their precise domains and regularity conditions are given in Section~\ref{setup section}. 
When the process starts from an empty history, the integration area reduces to $(0,t)\times\mathscr X$.
The function $W$ plays the role of a latent connectivity structure governing the excitation dynamics of the process.  
Our statistical objective is to estimate the unknown functions $\nu$, $h$, and $W$ from the observed point pattern $\omega_T$.

The estimation problem is challenging for several reasons. 
First, the conditional intensity density and, in particular, the corresponding connectivity structure represented by the graphon are not observed directly but must be inferred from the observed pattern of events in time and space. 
Second, the graphon describes a population-level interaction structure rather than a single finite network.
In this sense, the network is \emph{doubly latent}: both the dynamics represented by the spatiotemporal conditional intensity density and the underlying graphon that governs it are hidden within the observed point pattern. 
Finally, the parameters $\nu$, $h$, and  $W$ belong to infinite-dimensional function spaces, which makes the estimation problem intrinsically high-dimensional.

To address these challenges, our starting point is a model representation based on orthogonal basis expansions. 
Let $(\varphi_k)_{k\in\mathbb N}$ be an orthonormal basis of $L^2(\mathscr X)$ and $(\chi_m)_{m\in\mathbb N}$ an orthonormal basis of $L^2(\mathbb R_+)$. 
We expand the model components as
\begin{align*}
&\nu(x)=\sum_{k\in\mathbb N}\alpha_k^*\varphi_k(x), \qquad
&&W(x,y)=\sum_{k,l\in\mathbb N}\beta_{k,l}^*\varphi_k(x)\varphi_l(y), \qquad
&&&h(t)=\sum_{m\in\mathbb N}\gamma_m^*\chi_m(t).\\
&\alpha_k^*:=\langle \nu,\varphi_k\rangle_{L^2(\mathscr X)},
\qquad
&&\beta_{k,l}^*:=\langle W,\varphi_k\otimes\varphi_l\rangle_{L^2(\mathscr X^2)},
\qquad
&&&\gamma_m^*:=\langle h,\chi_m\rangle_{L^2(\mathbb R_+)}.
\end{align*}
After truncating the expansions to $M=M(T)$ spatial basis functions and $H=H(T)$ temporal basis functions, the problem reduces to estimating the coefficients $(\alpha_k)_{k\in[M]}$, $(\beta_{k,l})_{k,l\in[M]}$, and $(\gamma_m)_{m\in[H]}$. 
The number of parameters grows rapidly with the spatial resolution, in particular through the $M^2$ interaction coefficients associated with the graphon. 

Such high-dimensional parameterizations require regularization in order to obtain stable and interpretable estimates. 
In many applications, the effective interaction structure is expected to be (approximately) sparse, meaning that only a small fraction of the multiplicity of all possible interactions play a substantial role (see, e.g., \cite{Buhlmann2011}). 
Motivated by this observation, we estimate the model parameters using a penalized least-squares approach with an $\ell^1$-type penalty on the interaction coefficients. 
This penalization promotes sparsity in the effective excitation mechanism in order to control model complexity.


Penalization methods have become a central tool in high-dimensional inference for point processes. 
An influential early contribution is \cite{Hansen}, which develops $\ell_1$-penalized estimators for multivariate counting processes whose intensities depend linearly on an unknown function expanded over a dictionary. 
Their analysis establishes oracle inequalities and introduces data-driven choices of penalty weights using martingale concentration bounds. 
Subsequent work has extended these ideas in various directions: \cite{Zhang2016} incorporates structural information through structured sparsity penalties, whereas \cite{Donnet2020} studies Bayesian nonparametric inference for Hawkes processes. 
Concentration inequalities for Hawkes processes derived in \cite{Chen2019} have also played an important role in subsequent developments, including high-dimensional inference procedures such as the decorrelated score test of \cite{Wang2024}. 
Related problems such as the classification of Hawkes process trajectories (\cite{Denis2024}), sparsity testing (\cite{Lotz}), and models with covariates (\cite{Fallahi}) have been investigated as well.

Among these contributions, the work of \cite{Cai2024} considers high-dimensional multivariate Hawkes process estimation using penalized least squares with spline basis expansions. 
Their framework allows for both excitatory and inhibitory interactions and accommodates time-varying baseline intensities. 
Importantly, they establish non-asymptotic error bounds and network recovery guarantees in regimes where the dimension of the multivariate process may grow faster than the observation horizon. 

Our approach shares several methodological aspects with this line of work, in particular the use of basis expansions and sparsity-inducing penalties to control high-dimensional interaction structures. 
However, the setting we consider differs fundamentally from \cite{Cai2024} in that our spatial domain $\mathscr X$ is continuous and infinite-dimensional, and that the connectivity structure is modeled through a graphon rather than a finite adjacency matrix. 
These differences remain after basis expansion and truncation: whereas these yield a finite-dimensional parameterization, the underlying interaction structure is still described by a graphon on a continuous, infinite-dimensional spatial domain.
In our setting, the finite network structure is replaced by a continuous-space interaction operator, requiring non-asymptotic control of a random Gram matrix that inherits the temporal and spatial dependences induced by the continuous-space graphon Hawkes process. 
Moreover, unlike \cite{Cai2024}, we do not impose an almost-sure upper bound on the conditional intensity. 
This necessitates an intrinsically different treatment of the stochastic terms arising in the estimation problem and leads to additional technical challenges in establishing non-asymptotic error bounds.

We summarize the main contributions of this paper. 
In Section~\ref{setup section}, we introduce the statistical framework and formally define the estimation problem together with the assumptions required for our analysis. 
Although the model allows for a continuum of spatial locations $\mathscr X$, the assumptions imposed are in several respects weaker than those used in related high-dimensional Hawkes process literature. 
In particular, unlike \cite{Cai2024}, we do not assume that the conditional intensity is almost surely bounded from above, 
thus allowing for linear Hawkes dynamics.

Section~\ref{consistency section} establishes non-asymptotic error bounds for the estimated intensity functions obtained from the truncated basis representation. 
The analysis is carried out in a high-dimensional regime where the number of basis coefficients may grow with the observation horizon. 
We overcome the technical difficulties resulting from our possibly unbounded conditional intensity through a truncation argument combined with probabilistic control of the relevant stochastic terms. 
In order to establish the non-asymptotic error bound, we require concentration inequalities for first- and second-order statistics of the spatiotemporal Hawkes process. 
These bounds are obtained using a thinning-based representation of the process and are of potential independent interest for the study of spatiotemporal point process models.

In Section~\ref{sparsistency section}, we study recovery of the latent interaction structure and establish \emph{sparsistency} of the proposed estimator: under suitable regularity and sparsity assumptions, we show that it identifies the true network edges with probability tending to one as the observation horizon grows. 
Thus, the procedure consistently recovers the underlying connectivity structure despite the high-dimensional parameterization induced by the basis expansion.

Section~\ref{simulation study section} presents a simulation study assessing the finite-sample performance and robustness of the proposed estimator across a range of sparsity levels, interaction structures, and forms of misspecification. 
The results demonstrate accurate coefficient recovery, support recovery, and decreasing estimation error with increasing observation horizon, with performance persisting under substantial departures from the specified model class. 

In Section~\ref{data study section}, we apply our method to a Neuropixels extracellular data set maintained by the Allen Institute for Brain Science \cite{AllenNeuroData}, consisting of time-stamped neuronal spike events with associated probe-depth information from an awake mouse exposed to controlled visual stimuli. 
The estimated graphons are highly consistent across substantially different spatial bases, with Legendre-polynomial and Haar-wavelet specifications recovering the same dominant connectivity structure despite their different representations. 
Together, these results provide evidence that the proposed procedure is able to recover interpretable and basis-robust spatial interaction patterns in both controlled simulations and real data.

We finish in Section~\ref{discussion section} with possible extensions and suggestions for further research.


\subsection*{Conventions} 
Throughout, we write $\mathbb{N} := \{1,2,3,\dots\}$, $\mathbb{N}_0 := \mathbb{N} \cup \{0\}$, and for $k \in \mathbb{N}$, $[k] := \{1, \dots, k\}$. 
The positive half-line is denoted by $\mathbb{R}_+ := [0,\infty)$. 
In general, temporal variables are denoted by $s, t$, and spatial variables by $x, y$. 
For matrices $A \in \mathbb{R}^{m \times n}$ and $B \in \mathbb{R}^{p \times q}$, $A \otimes B \in \mathbb{R}^{mp \times nq}$ denotes the tensor (Kronecker) product.

In the proofs, the $c_i$ and $c_{i,j}$ denote constants that may depend on model parameters but are independent of $T$, and the $q_i$ denote constants appearing in probabilistic bounds. 
These constants may vary between proofs and may be redefined in the main results for readability.

\subsection*{Appendices}
The proof for the concentration inequalities for first- and second-order statistics of the spatiotemporal Hawkes process together with some auxiliary results are collected in Appendix~\hyperref[app A]{A}.
The proofs of our main results that are not given in the main text  are collected Appendix~\hyperref[app B]{B}.
Finally,  some supplementary results corresponding to the simulation study conducted in Section \ref{simulation study section} are provided in  Appendix~\hyperref[app C]{C}.

\section{Setup and main assumptions}\label{setup section}

We observe a realization $\omega_T=\{(t_i,x_i)\}_{i\in[n(T)]}$ of a spatiotemporal point process $N$ on $[0,T]\times\mathscr X$, where $0\leqslant t_1\leqslant t_2\leqslant \cdots\leqslant t_{n(T)}\leqslant T$. Here $n(T)$ denotes the number of observed events in the time interval $[0,T]$. The spatial locations $x_i$ take values in a measurable space $(\mathscr X,\mathscr A,\mu)$, which we assume to be a Euclidean space equipped with the Lebesgue measure $\mu$, so that $\mathrm d\mu(x)=\mathrm dx$. Our framework can be extended in a straightforward way to a Polish space $\mathscr X$ endowed with a $\sigma$-finite Borel measure.

The starting point of our analysis is the linear graphon Hawkes process introduced in \cite{Baars2024}. Assuming that the process starts from an empty history, the dynamics of the spatiotemporal point process $N$ are specified through the conditional intensity density
\begin{equation}\label{conditional int dens}
\lambda(t,x):=\frac{\mathbb E[N(\mathrm dt,\mathrm dx)]}{\mathrm dt\,\mathrm dx}=\nu(x)+\sum_{t_i<t}W(x,x_i)h(t-t_i)=\nu(x)+\int_{(0,t)\times\mathscr X}W(x,y)h(t-s)\,N(\mathrm ds,\mathrm dy).\end{equation}
Here $\nu\in L^1_{\mathrm{loc}}(\mathscr X)$ denotes the baseline intensity, $h\in L^1(\mathbb R_+)$ is an excitation kernel, and $W:\mathscr X^2\to[0,C_W]$ is a (possibly directed) graphon describing the interaction structure between spatial locations.  The probabilistic structure of the process $N$ is uniquely determined by the conditional intensity density $\lambda$; see \cite{DaleyVereJones}, Proposition~7.3.IV.

Our objective is to estimate the parameters $\nu$, $W$, and $h$ from the observed data. In particular, we are interested in recovering the network structure encoded by the graphon $W$. While the primary focus of this paper is on estimating the interaction structure, we estimate the excitation kernel $h$ simultaneously with $\nu$ and $W$.

Below, in Assumption \ref{stability ass}(i), we  restrict attention to a stable regime. Define the linear operator
\begin{equation}\label{operator Thom marked general X}T_{\mathrm{hom}}:L^1(\mathscr X,\mu)\to L^1(\mathscr X,\mu):f(\cdot)\mapsto\|h\|_{L^1(\mathbb R_+)}\int_{\mathscr X}W(\cdot,y)f(y)\,\mathrm dy.\end{equation}
Stability of the process is ensured if $\rho(T_{\mathrm{hom}})<1$; see \cite{Baars2024}, Theorem~1.

Since $\mathscr X$ is a continuous space, the functions $\nu$, $W$, and $h$ belong to the infinite-dimensional spaces $L^2(\mathscr X)$, $L^2(\mathscr X^2)$, and $L^2(\mathbb R_+)$, respectively. To obtain a tractable estimation problem we adopt a nonparametric representation based on orthogonal basis expansions.

Let $(\varphi_k)_{k\in\mathbb N}$ be an orthonormal basis of $L^2(\mathscr X)$ and $(\chi_m)_{m\in\mathbb N}$ an orthogonal basis of $L^2(\mathbb R_+)$. In particular, 
\begin{align}
\langle \varphi_k,\varphi_l\rangle
&=\int_{\mathscr X}\varphi_k(x)\varphi_l(x)\,\mathrm dx=\delta_{kl},\nonumber\\
\langle \chi_k,\chi_l\rangle
&=\int_{\mathbb R_+}\chi_k(t)\chi_l(t)\,\mathrm dt=c_k\delta_{kl},
\label{orthonormality condition}
\end{align}
where $(c_k)_{k\in\mathbb N}\subset(0,\infty)$, and where $\delta_{kl}$ denotes the Kronecker delta.

Using these bases we expand the model components as
\begin{equation}
\nu(x)=\sum_{k\in\mathbb N}\alpha_k^*\varphi_k(x),\qquad
W(x,y)=\sum_{k,l\in\mathbb N}\beta_{k,l}^*\varphi_k(x)\varphi_l(y),\qquad
h(t)=\sum_{m\in\mathbb N}\gamma_m^*\chi_m(t).
\label{expansion basis infinite}
\end{equation}

To obtain a finite-dimensional representation we truncate these expansions. Let $M=M(T)$ denote the number of spatial basis functions and $H=H(T)$ the number of temporal basis functions retained. The truncated parametrization becomes
\begin{align}
\nu_\alpha(x)=\sum_{l\in[M]}\alpha_l\varphi_l(x),\qquad
W_\beta(x,y)=\sum_{k,l\in[M]}\beta_{k,l}\varphi_k(x)\varphi_l(y),\qquad
h_\gamma(t)=\sum_{m\in[H]}\gamma_m\chi_m(t).
\label{expansion basis finite}
\end{align}
Let $\theta=(\alpha,\beta,\gamma)\in\mathbb R^{M}\times\mathbb R^{M\times M}\times\mathbb R^H$. The conditional intensity associated with parameter $\theta$ is
\begin{equation}\label{lambda_c def}
\lambda_{\theta}(t,x)=\nu_\alpha(x)+\int_{(0,t)\times\mathscr X}W_\beta(x,y)h_\gamma(t-s)\,N(\mathrm ds,\mathrm dy).\end{equation}
The number of parameters grows rapidly with the spatial resolution, in particular through the $M^2$ interaction coefficients contained in $\beta$.

To avoid overfitting we impose a sparsity-inducing penalty on the interaction structure. In particular, following the general framework of penalized least-squares estimation in high-dimensional models \cite{Buhlmann2011}, Chapter 6, we aim to estimate $\theta$ by minimizing the penalized least-squares contrast
\begin{equation}\label{pen LS contrast}
\mathcal C(\omega_T,\theta)=\frac1T\left(-\int_{(0,T)\times\mathscr X}\lambda_\theta(t,x)\,N(\mathrm dt,\mathrm dx)+\frac12\int_{(0,T)\times\mathscr X}\lambda_\theta(t,x)^2\,\mathrm dt\,\mathrm dx\right)+\mathcal P(\theta),
\end{equation}
for a penalty function $\mathcal P(\theta)$.
A natural choice is the $\ell^1$ penalty
$$\mathcal P(\theta)=\eta\|\beta\otimes\gamma\|_{\ell^1},$$
where $\eta=\eta(T)>0$ promotes sparsity of the tensor coefficients $\beta_{k,l}\gamma_m$, yielding a lasso-type estimator. 
Other penalties can also be considered. 
In particular, an elastic net penalty combines the sparsity-inducing $\ell^1$ penalty with an $\ell^2$ regularization term, while a ridge penalty uses $\ell^2$ regularization alone and therefore does not induce sparsity. 
The error analysis of Section~\ref{consistency section} extends to these alternative penalty choices, although sparsistency is specific to sparsity-inducing penalties; see Remark~\ref{possible extensions remark} for further details.

Accordingly, we define the estimator $\hat\theta$ as any minimizer of
\begin{equation}\label{lasso LS contrast}
\hat \theta\in\argmin_{\substack{\theta\in\mathbb R^{M}\times\mathbb R^{M\times M}\times\mathbb R^H\\\|\gamma\|_{\ell^1}=1}}\left\{\frac1T\left(-\int_{(0,T)\times\mathscr X}\lambda_\theta(t,x)\,N(\mathrm dt,\mathrm dx)+\frac12\int_{(0,T)\times\mathscr X}\lambda_\theta(t,x)^2\,\mathrm dt\,\mathrm dx\right)+\eta\|\beta\otimes\gamma\|_{\ell^1}\right\},\end{equation}
where the constraint $\|\gamma\|_{\ell^1}=1$ ensures identifiability.

Define the random symmetric Gram matrix $G\in\mathbb R^{(1+MH)\times(1+MH)}$ by
\begin{align}
G_{0,0}&:=1,\nonumber\\
G_{0,(l,m)}&:=\frac1T\int_{(0,T)}\int_{(0,t)\times\mathscr X}
\varphi_l(y)\chi_m(t-s)\,N(\mathrm ds,\mathrm dy)\,\mathrm dt,\label{G random matrix}\\
G_{(l,m),(l',m')}
&:=\frac1T\int_{(0,T)}
\Psi_{l,m}(t)\Psi_{l',m'}(t)\,\mathrm dt,\nonumber
\end{align}
where
\begin{equation}\label{psi def}
\Psi_0(t)\equiv1,\qquad
\Psi_{l,m}(t)=\int_{(0,t)\times\mathscr X}
\varphi_l(y)\chi_m(t-s)\,N(\mathrm ds,\mathrm dy).
\end{equation}
Hence $G$ can be interpreted as the Gram matrix generated by the random functions $(\Psi_n)_{n\in\{0\}\cup([M]\times[H])}$.
This Gram matrix plays a key role in controlling differences between conditional intensity functions corresponding to different parameters, which can be seen from Lemma \ref{integrated diff and SDP} below.

We show that the loss function appearing in (\ref{lasso LS contrast}) is convex, which is important for efficient computation of the parameter minimizing the loss function. For notational convenience, we construct a parameter vector containing a tensor product of $\beta,\gamma$.
Let $\beta_{k,\cdot}=(\beta_{k,l})_{l\in[M]}$. We define 
\begin{align}\theta^\otimes:=\begin{bmatrix}
\alpha\\\beta\otimes\gamma\end{bmatrix}\in\mathbb R^{M+M^2H};\qquad
\theta^\otimes_k:=\begin{bmatrix}
\alpha_k\\\beta_{k,\cdot}\otimes\gamma\end{bmatrix}\in\mathbb R^{1+MH}.\label{theta tens}
\end{align}

\begin{lemma}\label{integrated diff and SDP}
Let $\tilde\theta=(\tilde\alpha,\tilde\beta,\tilde\gamma)$ and $\hat\theta=(\hat\alpha,\hat\beta,\hat\gamma)$ be two admissable parameter vectors. 
Let $(\varphi_k)_{k\in\mathbb N}$ be an orthonormal sequence.
Set $\Delta\alpha:= \tilde\alpha-\hat\alpha$, and $\Delta (\beta\otimes\gamma):=\tilde\beta\otimes\tilde\gamma-\hat\beta\otimes\hat\gamma$.  Let $\smash{G\in\mathbb R^{(1+MH)\times (1+MH)}}$ be the random Gram matrix defined in (\ref{G random matrix}), and let $\theta^\otimes$ and $\theta^\otimes_k$ be as in (\ref{theta tens}), with $\smash{\Delta\theta^\otimes:=\tilde \theta^\otimes-\hat\theta^\otimes}$ and  $\smash{\Delta\theta^\otimes_k:=\tilde \theta^\otimes_k-\hat\theta^\otimes_k}$. Then it holds that 
\begin{align}
\frac1{T}\int_{(0,T)\times\mathscr X}(\lambda_{\tilde \theta}(t,x)-\lambda_{\hat \theta}(t,x))^2\,\mathrm dt\,\mathrm dx =\sum_{k\in[M]}\left(\Delta \theta_k^\otimes\right)^\top G\Delta\theta^\otimes_k\geq\sigma_{\min}(G)\frac{\left(\|\Delta\alpha\|_{\ell^1}+\|\Delta (\beta\otimes\gamma)\|_{\ell^1}\right)^2}{\mathscr S},\label{eq13}
\end{align}
 where $\mathscr S\leq 2M^2H$ is the number of nonzero elements of $\Delta\theta^\otimes$.
\end{lemma}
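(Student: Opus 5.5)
The plan is to prove the identity first and the inequality afterwards. For the identity, write the difference of intensities explicitly. By (\ref{lambda_c def}) and (\ref{expansion basis finite}),
\begin{align*}
\lambda_{\tilde\theta}(t,x)-\lambda_{\hat\theta}(t,x)
&=\sum_{k\in[M]}\varphi_k(x)\Big(\Delta\alpha_k+\sum_{l\in[M],m\in[H]}\big(\tilde\beta_{k,l}\tilde\gamma_m-\hat\beta_{k,l}\hat\gamma_m\big)\Psi_{l,m}(t)\Big)\\
&=\sum_{k\in[M]}\varphi_k(x)\,\big(\Delta\theta^\otimes_k\big)^\top\Psi(t),
\end{align*}
where $\Psi(t):=(\Psi_0(t),(\Psi_{l,m}(t))_{l,m})\in\mathbb R^{1+MH}$ is ordered consistently with (\ref{theta tens}). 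The rearrangement is valid because the expansion of $W_\beta(x,y)h_\gamma(t-s)$ is a finite sum, so the integral against $N$ can be taken term by term, producing exactly the $\Psi_{l,m}$ of (\ref{psi def}).

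Next, square this expression and integrate over $x$ first. Orthonormality of $(\varphi_k)$ kills all cross terms $k\neq k'$, leaving $\sum_k\big((\Delta\theta^\otimes_k)^\top\Psi(t)\big)^2$. Integrating in $t$ and dividing by $T$ gives $\sum_k(\Delta\theta^\otimes_k)^\top\big(\frac1T\int_0^T\Psi(t)\Psi(t)^\top\mathrm dt\big)\Delta\theta^\otimes_k$. One checks entrywise that this matrix equals $G$ in (\ref{G random matrix}). The $(0,0)$ entry is $\frac1T\int_0^T1\,\mathrm dt=1$, and the $(0,(l,m))$ entry is $\frac1T\int_0^T\Psi_{l,m}(t)\,\mathrm dt$. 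Fubini is justified because $\omega_T$ has finitely many points.

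For the inequality, use the Rayleigh bound $v^\top Gv\geq\sigma_{\min}(G)\|v\|_{\ell^2}^2$; note $G$ is positive semidefinite as a Gram matrix, so $\sigma_{\min}=\lambda_{\min}\ge0$. This gives
\[
\sum_k(\Delta\theta_k^\otimes)^\top G\,\Delta\theta_k^\otimes\;\geq\;\sigma_{\min}(G)\sum_k\|\Delta\theta^\otimes_k\|_{\ell^2}^2=\sigma_{\min}(G)\,\|\Delta\theta^\otimes\|_{\ell^2}^2,
\]
since the blocks $\Delta\theta^\otimes_k$ partition the coordinates of $\Delta\theta^\otimes$. Finally, Cauchy--Schwarz on the support of $\Delta\theta^\otimes$, which has cardinality $\mathscr S$, gives $\|\Delta\theta^\otimes\|_{\ell^1}^2\leq\mathscr S\|\Delta\theta^\otimes\|_{\ell^2}^2$. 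Moreover, $\|\Delta\theta^\otimes\|_{\ell^1}=\|\Delta\alpha\|_{\ell^1}+\|\Delta(\beta\otimes\gamma)\|_{\ell^1}$. If $\mathscr S=0$ both sides vanish and the statement is read with the convention $0/0=0$.

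The bound on $\mathscr S$ is only a count of coordinates: $\Delta\theta^\otimes$ has $M+M^2H$ entries, and $M+M^2H\leq 2M^2H$ since $M,H\geq1$.

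The only mildly delicate step is the index bookkeeping identifying $\frac1T\int\Psi\Psi^\top$ with $G$, together with ensuring the ordering in $\theta^\otimes_k$ matches that of $\Psi$. The rest is routine linear algebra.
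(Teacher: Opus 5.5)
Your proposal is correct and follows essentially the same route as the paper. Both expand $\lambda_{\tilde\theta}-\lambda_{\hat\theta}$ in the $\varphi_k$, use orthonormality to reduce the integral to $\sum_{k}(\Delta\theta_k^\otimes)^\top G\,\Delta\theta_k^\otimes$, and then apply the Rayleigh bound and Cauchy--Schwarz on the support of $\Delta\theta^\otimes$. Your extra remarks are correct refinements of details the paper leaves implicit: equality in the block decomposition, positive semidefiniteness of $G$ (so $\sigma_{\min}$ is the smallest eigenvalue), the $\mathscr S=0$ convention, and the count $M+M^2H\leq 2M^2H$.
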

The proof of Lemma \ref{integrated diff and SDP} can be found in Appendix~\hyperref[app B]{B}.
Note that $s$ may be of order lower than $M^2H$ in case $\Delta(\beta\otimes\gamma)$ is sparse.

The next result follows from expanding the loss function used in (\ref{lasso LS contrast}), applying the equality in (\ref{eq13}).
\begin{corollary}
 For $k\in[M]$ and $n\in\{0\}\cup([M]\times[H])$, let 
\begin{equation}
\vartheta_{k,n}:=\frac1T\int_{(0,T)\times\mathscr X}\varphi_k(x)\Psi_n(t)\,N(\mathrm dt,\mathrm dx),
\end{equation}
where $\Psi_n$ is defined in (\ref{psi def}).
Then $\vartheta\in\mathbb R^{M+M^2H}$ can be calculated from an observation of a spatiotemporal point process on $[0,T]\times\mathscr X$.
The loss function used in equation (\ref{lasso LS contrast})  can be rewritten to \begin{equation}\label{lasso LS contrast convex}
\theta\mapsto-\left(\theta^\otimes\right)^\top\vartheta+\frac12\sum_{k\in[M]}\left(\theta_k^ \otimes\right)^\top G\theta_k^\otimes+\eta\|\beta\otimes\gamma\|_{\ell^1}.
\end{equation} Hence, the loss function is convex.
\end{corollary}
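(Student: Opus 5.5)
The plan is to write $\lambda_\theta$ as a finite linear combination of the random functions $\Psi_n$ with spatial weights $\varphi_k$, and then read off both terms of the contrast in (\ref{lasso LS contrast}). By (\ref{expansion basis finite}) and (\ref{lambda_c def}), and since $\omega_T$ contains a.s.\ finitely many events, all sums and integrals below are finite and may be interchanged. This gives
\[
\lambda_\theta(t,x)=\sum_{k\in[M]}\varphi_k(x)\Big(\alpha_k\Psi_0(t)+\sum_{l\in[M],\,m\in[H]}\beta_{k,l}\gamma_m\Psi_{l,m}(t)\Big)=\sum_{k\in[M]}\varphi_k(x)\,(\theta^\otimes_k)^\top\Psi(t),
\]
where $\Psi(t):=(\Psi_n(t))_{n\in\{0\}\cup([M]\times[H])}$ is indexed consistently with (\ref{theta tens}).

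\emph{Linear term.} Integrating the display above against $N(\mathrm dt,\mathrm dx)/T$ and using the definition of $\vartheta_{k,n}$ gives
\[
\frac1T\int\lambda_\theta\,\mathrm dN=\sum_k(\theta^\otimes_k)^\top\vartheta_{k,\cdot}=(\theta^\otimes)^\top\vartheta,
\]
after ordering $\vartheta$ like $\theta^\otimes$.

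\emph{Quadratic term.} I would apply the equality part of Lemma~\ref{integrated diff and SDP} with $\tilde\theta=\theta$ and $\hat\theta$ the zero parameter, so that $\Delta\theta^\otimes_k=\theta^\otimes_k$. The zero parameter violates $\|\gamma\|_{\ell^1}=1$, but the equality in (\ref{eq13}) uses only the representation above and not that constraint. If one prefers to avoid this point, the computation can be redone directly: expand the square, integrate over $x$ first using $\langle\varphi_k,\varphi_{k'}\rangle=\delta_{kk'}$, and the remaining time integrals are exactly the entries of $G$. This is consistent with (\ref{G random matrix}): $G_{0,0}=T^{-1}\int_0^T\Psi_0^2=1$ and $G_{0,(l,m)}=T^{-1}\int_0^T\Psi_0\Psi_{l,m}$. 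The result is $\frac1{2T}\int\lambda_\theta^2=\frac12\sum_k(\theta^\otimes_k)^\top G\theta^\otimes_k$. Together with the penalty, this yields (\ref{lasso LS contrast convex}).

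\emph{Computability.} Each $\Psi_{l,m}(t_i)=\sum_{t_j<t_i}\varphi_l(x_j)\chi_m(t_i-t_j)$ is a finite sum over observed events. Hence
\[
\vartheta_{k,n}=T^{-1}\sum_i\varphi_k(x_i)\Psi_n(t_i)
\]
is a function of $\omega_T$ alone, and so is $G$, because its entries are time integrals of the explicitly known piecewise-defined functions $\Psi_n$.

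\emph{Convexity.} $G$ is positive semidefinite, since for any $v$
\[
v^\top Gv=\frac1T\int_0^T\Big(\sum_n v_n\Psi_n(t)\Big)^2\mathrm dt\geq0.
\]
Each map $\theta^\otimes\mapsto(\theta^\otimes_k)^\top G\theta^\otimes_k$ is therefore a convex quadratic. The term $-(\theta^\otimes)^\top\vartheta$ is linear, and $\|\beta\otimes\gamma\|_{\ell^1}$ is a norm of a sub-block of $\theta^\otimes$, hence convex. A sum of convex functions is convex.

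The main obstacle is stating precisely what ``convex'' refers to. The objective is convex as a function of the lifted variable $\theta^\otimes=(\alpha,\beta\otimes\gamma)$. It is not jointly convex in $(\alpha,\beta,\gamma)$, because of the bilinear coupling $\beta\otimes\gamma$ and the non-convex constraint $\|\gamma\|_{\ell^1}=1$. I would therefore state the conclusion explicitly as convexity in $\theta^\otimes$, equivalently in $\alpha$ and in $\beta$ for fixed $\gamma$, which is what efficient computation requires. Apart from this, the only care needed is the index bookkeeping, so that the quadratic form of $G$ matches the ordering of $\theta^\otimes_k$.
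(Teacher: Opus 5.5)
Your proposal is correct and follows the paper's own one-line argument: expand $\lambda_\theta$ in the basis, identify the linear term with $(\theta^\otimes)^\top\vartheta$, and obtain the quadratic term from the equality in (\ref{eq13}). Two small refinements. First, the admissibility concern disappears if you take $\hat\alpha=0$, $\hat\beta=0$ and any $\hat\gamma$ with $\|\hat\gamma\|_{\ell^1}=1$, since this still gives $\hat\theta^\otimes=0$. Second, your clarification that convexity holds in the lifted variable $\theta^\otimes$, and not jointly in $(\alpha,\beta,\gamma)$ under the nonconvex constraint $\|\gamma\|_{\ell^1}=1$, is accurate and makes precise what the paper's ``hence convex'' means.
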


Estimating $\nu$, $W$, and $h$ within the finite-dimensional parametrization in (\ref{expansion basis finite}) requires that the corresponding truncated model provides a sufficiently accurate approximation to the true intensity. This is essential for our consistency and sparsistency results: the estimator is defined on the finite-dimensional model, and hence these results can only hold if the approximation error induced by truncating the spatial and temporal expansions vanishes sufficiently rapidly as $M,H\to\infty$. We formalize this requirement as follows.
\begin{assumption}\label{ass approx}
There exists $\tilde \theta=\tilde\theta(T)\in\mathbb R^{M}\times\mathbb R^{M\times M}\times\mathbb R^H$ with $\|\tilde\gamma\|_{\ell^1}=1$ such that there exists a smoothness parameter $d\geq 2$ and positive constants $c_2,q_1,q_2$ such that with probability at least $1-q_1T\exp(-q_2T^{1/5})$,
\begin{equation}\label{eqn approx}
\frac1T\int_{(0,T)\times\mathscr X}(\lambda_{\tilde \theta}(t,x)-\lambda(t,x))^2\,\mathrm dt\,\mathrm dx\leq c_2M^{-2d}H^{-d},
\end{equation}
with $\tilde \beta_{k,l}=0$ whenever $\int_{\mathscr X}\int_{\mathscr X}W(x,y)\varphi_k(x)\varphi_l(y)\,\mathrm dx\,\mathrm dy=0$.
\end{assumption} 
The first part of the assumption controls the approximation error induced by truncating the spatial and temporal expansions and requires this error to decay sufficiently rapidly relative to the stochastic error. This is essential for both consistency and support recovery: the approximation error enters directly into the estimation bounds and, if it is not asymptotically negligible, neither convergence to the true model nor recovery of its support can in general be guaranteed. The additional support condition ensures that the approximating parameter $\tilde\theta$ does not introduce extraneous nonzero interaction coefficients, which is essential for the primal-dual witness argument underlying sparsistency.
The smoothness parameter $d$ governs the approximation rate and reflects the regularity of $\nu$, $W$, and $h$ relative to the chosen basis systems. The bound in (\ref{eqn approx}) can, for example, be established when these functions admit sufficiently accurate basis expansions; see \cite{Cai2024}, Assumption 1 and Appendix A10. Thus, Assumption~\ref{ass approx} imposes precisely the approximation rate required for the truncation error to be dominated by the statistical error while preserving the support structure of the true interaction kernel.

For our main results in Sections~\ref{consistency section} and~\ref{sparsistency section}, we make some structural assumptions on the conditional intensity $\lambda$ of the model and on both the orthonormal sequence $(\varphi_l)_{l\in\mathbb N}\subset L^2(\mathscr X)$ and on the orthogonal sequence $(\chi_m)_{m\in\mathbb N}\subset L^2(\mathbb R_+)$.

\begin{assumption}\label{stability ass}
Suppose that 
\begin{enumerate}[label=(\roman*)]
\item $\rho(T_{\mathrm{hom}})<1$;
\item there exists $\varrho_{T_{\mathrm{hom}}}\in(0,1)$ such that $\sup_{x\in\mathscr X}|T_{\mathrm{hom}}(\mathrm{id})(x)|\leq\varrho_{T_{\mathrm{hom}}}$;
\item $\mu(\mathscr X)<\infty$;
\item $\underline \nu:=\inf_{x\in\mathscr X}\nu(x)>0$ and $\overline\nu:=\sup_{x\in\mathscr X}\nu(x)<\infty$;
\item there exists a uniform bound $C_W$ such that $\sup_{x,y\in\mathscr X}W(x,y)\leq C_W$;
\item $h$ has bounded support contained in  $[0,b]$ and has $\|h\|_{L^p([0,b])}<\infty$ for $p=1,2,\infty$.
\end{enumerate}
\end{assumption}

We discuss the plausibility of Assumption \ref{stability ass}. Note that parts (i), (iii), and (v) have already been imposed in \cite{Baars2024} in order to prevent explosivity.
Next, parts (ii) and (iv) can be viewed as counterparts of \cite{Cai2024}, Assumptions~2 and 4. As noted in \cite{Baars2024}, the space of multivariate Hawkes processes can be embedded within the space of graphon Hawkes processes, making it natural to extend these assumptions to the present, more general framework.
In (vi) we assume that the excitation kernel $h$ has bounded support, an assumption that is quite common in the analysis of multivariate Hawkes processes \cite{Cai2024, Costa2020, Hansen}. Note that bounded support, together with $\|h\|_{L^\infty(\mathbb R_+)}<\infty$, already implies that $\|h\|_{L^1(\mathbb R_+)}<\infty$ and $\|h\|_{L^2(\mathbb R_+)}<\infty$. We nevertheless state these integrability conditions explicitly for clarity.

\begin{assumption}\label{min eigenvalue ass}
Suppose that
\begin{enumerate}[label=(\roman*)]
\item $(\varphi_k)_{k\in\mathbb N}\subset L^2(\mathscr X)$ is an orthonormal sequence for which there exists a constant $c>0$ such that $\max_{k\in[M]}\|\varphi_k\|_{L^\infty(\mathscr X)}^2\leq cM$;
\item $(\chi_m)_{m\in\mathbb N}\in L^1([0,b])\cap L^2([0,b])$ is an orthogonal sequence for which $\mathfrak h_1:=\|\chi_1\|_{L^1(\mathbb R_+)}=\|\chi_m\|_{L^1(\mathbb R_+)}$ for all $m\in\mathbb N$;
\item $\mathfrak h_p:=\sup_{m\in\mathbb N}\|\chi_m\|_{L^p([0,b]}<\infty$ for $p=2,\infty$;
\end{enumerate}
\end{assumption}
Assumption \ref{min eigenvalue ass}(i) holds, for instance, for multiscale wavelet families constructed from a bounded mother wavelet, such as the Haar or Daubechies wavelets. It also holds for typical families of orthogonal polynomials on $[0,1]$, such as (shifted and rescaled) Legendre, Jacobi, and Chebyshev polynomials; the former are used in Sections~\ref{simulation study section} and~\ref{data study section}. Assumption \ref{min eigenvalue ass}(ii) ensures that all functions $\chi_m$ have the same norm; for example, this common value could be chosen to equal $\|h\|_{L^1(\mathbb R+)}$. This requirement can be satisfied by a simple normalization and is therefore not restrictive. Assumption \ref{min eigenvalue ass}(iii) requires that Assumption \ref{stability ass}(vi) also holds for the collection $(\chi_m)_{m\in\mathbb N}\subset L^2([0,b])$, uniformly in $m\in\mathbb N$.
In summary, the approximating sequences $(\varphi_k)_{k\in\mathbb N}$ and $(\chi_m)_{m\in\mathbb N}$ are required to be as well behaved as the original parameters. 

\begin{remark}\label{nonorthogonal temporal functions}
It is possible to replace orthogonality in Assumption \ref{min eigenvalue ass}(ii)--(iii) by the assumption that  $H\in\mathbb N$ is independent of $T$, and that there exists a dominating function $\bar h\in L^1([0,b])$ such that $\sup_{m\in[H]}|\chi_m(x)|\leq \bar h(x)$, $[0,b]$--a.e. However, we choose to work with the orthogonality assumption, as that approach allows for greater flexibility.
\end{remark}

Under the measure $\mathbb P$ corresponding to the spatiotemporal point process $N$, which starts from the empty history, define the first- and second-order mean intensities $\bar\lambda^{(1)}(t,x)$ and $\bar\lambda^{(2)}(t_1,x_1,t_2,x_2)$ by, respectively, 
\begin{align}
\label{1st order mean intensity}
\bar\lambda^{(1)}(t,x)&:=\mathbb E[N(\mathrm dt,\mathrm dx)]/(\mathrm dt\,\mathrm dx);\\
\label{2nd order mean intensity}
\bar\lambda^{(2)}(t_1,x_1,t_2,x_2)&:=\mathbb E[N(\mathrm dt_1,\mathrm dx_1)N(\mathrm dt_2,\mathrm dx_2)]/(\mathrm dt_1\,\mathrm dx_1\,\mathrm dt_2\,\mathrm dx_2).
\end{align}
Furthermore, define the covariance $C^{(0)}(s,y,s',y')$ by 
\begin{equation}\label{covariance def}
C^{(0)}(t_1,x_1,t_2,x_2):=\bar\lambda^{(2)}(t_1,x_1,t_2,x_2)-\bar\lambda^{(1)}(t_1,x_1)\bar\lambda^{(1)}(t_2,x_2).
\end{equation}
For the covariance, we make the following assumption, which can be seen to be a counterpart of \cite{Cai2024}, Assumption 4.

\begin{assumption}\label{ass PSD C0}
Suppose that $C^{(0)}(t_1,x_1,t_2,x_2)$ is positive semi-definite, i.e, for any $f\in L^2(\mathbb R_+\times\mathscr X)$, 
\begin{equation}
\int_{(0,t)\times\mathscr X}\int_{(0,t)\times\mathscr X}f(t_1,x_1)\,C^{(0)}(t_1,x_1,t_2,x_2)\,f(t_2,x_2)\,\mathrm dt_1\,\mathrm dx_1\,\mathrm dt_2\,\mathrm dx_2\geq0.
\end{equation}
\end{assumption}
The positive semidefiniteness in Assumption~\ref{ass PSD C0} is not automatic. For a simple point process, the covariance measure contains both a singular, diagonal contribution, and the absolutely continuous off-diagonal component $C^{(0)}$; hence, positive semidefiniteness of the full covariance does not imply that of $C^{(0)}$. The assumption therefore rules out sufficiently strong negative dependence in the second-order structure. 
While the assumption is natural for self-exciting Hawkes processes with nonnegative interaction kernels, whose dependence is induced by the branching structure, it may fail for sufficiently inhibitory or repulsive processes, where the off-diagonal covariance can be negative.
Assumption~\ref{ass PSD C0} is used directly in the proof of Lemma~\ref{lemma min eigenvalue} to ensure a uniformly nondegenerate population Gram matrix. Without this condition, the required lower eigenvalue bound may fail, invalidating the subsequent error bounds and, consequently, the consistency and sparsistency results.

\section{Consistency}\label{consistency section}
In this section, we estimate $\lambda(t,x)$ using an observation $\omega_T$. We prove that we can consistently estimate the graphon from the spatiotemporal point process model by taking $\hat \theta$ as in (\ref{lasso LS contrast}), by showing that the estimated intensity $\lambda_{\hat\theta}$ converges to the true intensity $\lambda$, as $T\to\infty$. 

Our proof relies on concentration inequalities for first- and second-order statistics of the spatiotemporal Hawkes process. These inequalities are useful in the analysis of spatiotemporal Hawkes processes and may also be of independent interest.
Let $f_1:\mathbb R_+\times\mathscr X\to\mathbb R$ and $f_2:\mathbb R_+\times\mathscr X\times\mathscr X\to\mathbb R$ be bounded functions, with $\|f_1\|_{L^\infty},\|f_2\|_{L^\infty}<\infty$. Given a realization of $N$, we consider the empirical averages of these functions. Specifically, we study
\begin{align}
y_1:=y_1(T)&:=\frac1T\int_{(0,T)\times\mathscr X}f_1(t,x)\,N(\mathrm dt,\mathrm dx);\\
y_2:=y_2(T)&:=\frac1T\int_{(0,T)\times\mathscr X}\int_{(0,T)\times\mathscr X}f_2(t-s,x,y)\,N(\mathrm dt,\mathrm dx)\,N(\mathrm ds,\mathrm dy).
\end{align}
For these functionals of the point process $N$, we establish Bernstein-type concentration inequalities, as stated in Theorem \ref{concentration ineq}. The proof of Theorem \ref{concentration ineq} is provided in Appendix~\hyperref[app A]{A}.

\begin{theorem}[Concentration inequalities]\label{concentration ineq}
Grant Assumption \ref{stability ass}. Let $N$ be a Hawkes process with intensity given in (\ref{conditional int dens}), starting from the empty history. Let $f_1:\mathbb R_+\times\mathscr X\to\mathbb R$ and $f_2:\mathbb R_+\times\mathscr X\times\mathscr X\to\mathbb R$ be bounded functions, with $f_2$ having bounded support in its first variable; that is, there exists a bounded set $B$ such that $f_2(t,x,y)=0$ for all $t\notin B$. Then there exist positive constants $c_i>0$ and $q_j>0$, $i\in[2]$, $j\in[4]$, such that
\begin{align}
\mathbb P\left(|y_1-\mathbb Ey_1|\geq c_1T^{-2/5}\right)&\leq q_1T\exp\left(-q_2T^{1/5}\right); \label{1st order bernstein}\\
\mathbb P\left(|y_2-\mathbb Ey_2|\geq c_2T^{-2/5}\right)&\leq q_3T\exp\left(-q_4T^{1/5}\right).\label{2nd order bernstein}
\end{align}
\end{theorem}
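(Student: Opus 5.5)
The plan is to follow the classical route for Bernstein-type inequalities for Hawkes processes (Reynaud-Bouret--Roy; Hansen et al.; Chen et al.; Cai et al.): approximate the process on blocks of length of order $T^{a}$ by independent copies, using a thinning/cluster (Poisson embedding) construction. I will show three things: the approximating copies agree with $N$ except on an event of exponentially small probability, their block sums are independent, and the count on each block has exponential moments. A classical Bernstein inequality for bounded, independent, mean-zero variables then gives the result, after truncating the block counts at a level $T^{c}$. The exponents $-2/5$ and $1/5$ will come from balancing block length, truncation level and deviation size.

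\textbf{Step 1 (thinning representation and cluster length).} Realise $N$ by thinning a Poisson random measure $\Pi$ on $\mathbb R_+\times\mathscr X\times\mathbb R_+$ with Lebesgue intensity: a point $(t,x,u)$ is kept iff $u\le\lambda(t,x)$. Equivalently, use the Galton--Watson cluster representation. Immigrants arrive at rate $\nu(x)$, and each point at $y$ produces offspring with intensity $W(\cdot,y)h(\cdot)$. By Assumption~\ref{stability ass}(ii), the expected number of offspring is at most $\varrho_{T_{\mathrm{hom}}}<1$ uniformly in the parent location $y$. Hence the total cluster size $S$ is dominated by a subcritical branching process and has an exponential moment $\mathbb E e^{\kappa S}<\infty$ for some $\kappa>0$. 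By (vi), every cluster has temporal extent at most $bS$. Consequently, the probability that some cluster started in $[0,T]$ has length exceeding $\ell$ is at most of order $T e^{-\kappa\ell/b}$.

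\textbf{Step 2 (blocking and independence).} Cut $[0,T]$ into $2K$ alternating blocks of length $\ell\asymp T^{1/5}$, so $K\asymp T^{4/5}$. On each block $I_j$, define $N^{(j)}$ as the process generated only by immigrants in $I_j$ and the preceding buffer of length $\ell$, restricted to $I_j$. For blocks of the same parity, these processes are built from disjoint portions of $\Pi$, hence are independent. They coincide with $N$ on $I_j$ unless some cluster is longer than $\ell$, which by Step 1 happens with probability at most $q T e^{-q' T^{1/5}}$.

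\textbf{Step 3 (first-order statistic).} Write $Ty_1=\sum_j Z_j$ with $Z_j=\int_{I_j}f_1\,dN^{(j)}$. Then $|Z_j|\le\|f_1\|_\infty N^{(j)}(I_j\times\mathscr X)$. The block count is dominated by the total size of clusters of a Poisson$(\overline\nu\mu(\mathscr X)\cdot 2\ell)$ number of immigrants, so $\mathbb P(N^{(j)}(I_j)>T^{2/5})\le e^{-cT^{1/5}}$ once $\ell\asymp T^{1/5}$. Truncate the $Z_j$ at level $\|f_1\|_\infty T^{2/5}$. The variance of each truncated term is $O(\ell^2)$, and the truncation bias in the mean is exponentially small. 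Bernstein's inequality for the $K$ independent odd (resp.\ even) blocks, at deviation $c_1T^{3/5}$ of the sum, gives an exponent of order
\begin{equation*}
\frac{T^{6/5}}{K\ell^2+T^{2/5}T^{3/5}}\asymp T^{1/5}.
\end{equation*}
Finally, replacing $N^{(j)}$ by $N$ and $\mathbb E$ of the approximation by $\mathbb E y_1$ costs at most $q_1T e^{-q_2T^{1/5}}$.

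\textbf{Step 4 (second-order statistic).} Since $f_2$ vanishes for $|t-s|$ outside a bounded set $B$, the double integral $Ty_2$ decomposes into block contributions. Each contribution involves only points within distance $\sup B$ of $I_j$; I enlarge the buffers by $\sup B$ so that these contributions are still independent across blocks of the same parity. Each term is bounded by $\|f_2\|_\infty N^{(j)}(\tilde I_j)^2$. I therefore truncate the counts at $T^{1/5}$, so the squares are at most $T^{2/5}$. This still holds with probability $1-e^{-cT^{1/5}}$, because the counts have exponential moments in $\ell$-scaled form: the count exceeds $C\ell$ only with probability $e^{-c\ell}$. Taking the threshold as a constant multiple of $\ell\asymp T^{1/5}$ keeps the same exponents, and Bernstein is applied as before.

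\textbf{Main obstacle.} The delicate point is the exponential tail of $N^{(j)}(I_j\times\mathscr X)$ with the right scaling in $\ell$, combined with bookkeeping of the truncation bias of the means, since $\lambda$ is not bounded. To obtain the tail, I would bound the cumulant generating function of a subcritical multitype cluster uniformly in the type, using the sup bound in Assumption~\ref{stability ass}(ii). This shows $\log\mathbb E e^{\kappa N^{(j)}}\le C\ell$, and Chernoff's bound then finishes the argument.
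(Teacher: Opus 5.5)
\textbf{Verdict: genuine gap in the Bernstein step.} Your route (growing blocks, independent cluster copies, truncation, classical Bernstein) is a legitimate alternative to the paper's proof, but the exponent you display is miscomputed, and as written the argument gives only an $O(1)$ exponent.

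\textbf{How this differs from the paper.} The paper keeps blocks of fixed length $2\epsilon$. It controls their dependence through $\tau$-coefficients that decay exponentially by the coupling of Lemma~\ref{couplingdeviationslemma}. It then applies the Merlev\`ede--Peligrad--Rio inequality with tail exponents $1$ and $1/2$. There the block variances are $O(1)$ for free. Your long blocks buy exact independence, but the price is that you must control how the block variance grows with $\ell$.

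\textbf{Where the computation fails.} With $K\asymp T^{4/5}$ and $\ell\asymp T^{1/5}$ you have $K\ell^2\asymp T^{6/5}$, so
\[
\frac{T^{6/5}}{K\ell^2+T^{2/5}T^{3/5}}\asymp\frac{T^{6/5}}{T^{6/5}+T}\asymp 1,
\]
not $T^{1/5}$. Retuning $\ell$ cannot rescue a per-block variance of order $\ell^2$. Since $K\ell^2\asymp T\ell$, an exponent of order $T^{1/5}$ would force $\ell=O(1)$, and then the decoupling error $Te^{-c\ell}$ is useless. The $O(\ell^2)$ figure is the second moment of $Z_j$, whose mean is of order $\ell$; Bernstein needs the centred variance.

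\textbf{What the fix requires.} You need $\operatorname{Var}(Z_j)=O(\ell)$. Then $K\ell\asymp T$ and the exponent is $T^{6/5}/(T+T)\asymp T^{1/5}$.
\begin{enumerate}
\item For the first-order statistic this follows from the cluster representation. $Z_j$ is a sum, over the Poisson immigrants of a window of length $2\ell$, of independent cluster contributions bounded by $\|f_1\|_{L^\infty}S$. The compound-Poisson variance formula gives
\[
\operatorname{Var}(Z_j)\le 2\ell\,\overline\nu\,\mu(\mathscr X)\,\|f_1\|_{L^\infty}^2\sup_y\mathbb E S_y^2 .
\]
Clipping at level $\|f_1\|_{L^\infty}T^{2/5}$ is $1$-Lipschitz, so it does not increase the variance.
\item For the second-order statistic, ``Bernstein as before'' is not enough. $Z_j$ couples points from different clusters, and bounding it through $N(\tilde I_j)^2$ only yields a variance of order $\ell^4$. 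One way to get $O(\ell)$ is the Poincar\'e inequality for Poisson functionals of the marked immigrant process. The add-one cost of a cluster started at $z$ is at most $\|f_2\|_{L^\infty}\bigl(S_z^2+2S_zN(J_z)\bigr)$, where $J_z$ is a window of length $O(S_z+1)$. This cost has uniformly bounded second moment, and it vanishes unless $z$ lies in a window of length $O(\ell)$.
\end{enumerate}

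\textbf{Two smaller points.}
\begin{enumerate}
\item Assumption~\ref{stability ass}(ii) bounds $\sup_x\|h\|_{L^1(\mathbb R_+)}\int W(x,y)\,\mathrm dy$, which is an in-degree. For a directed graphon it does not give mean offspring at most $\varrho_{T_{\mathrm{hom}}}$ uniformly in the parent location. Uniform exponential moments of $S_y$ still hold, by iterating. Let $W_n$ be the $n$-fold composed kernel of $\|h\|_{L^1(\mathbb R_+)}W$; then
\[
\sup_y\int_{\mathscr X}W_n(x,y)\,\mathrm dx\le C_W\|h\|_{L^1(\mathbb R_+)}\mu(\mathscr X)\,\varrho_{T_{\mathrm{hom}}}^{n-1}.
\]
\item Independence across same-parity blocks should come from independent per-cluster randomness in the cluster representation. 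It does not follow from ``disjoint portions'' of a single thinning measure $\Pi$.
\end{enumerate}
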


\begin{remark}
The concentration rates in Theorem~\ref{concentration ineq} are not intrinsic to the argument. 
In fact, the proof of Theorem~\ref{concentration ineq} in Appendix~\hyperref[app A]{A} yields a {\em family} of exponential inequalities indexed by $\delta\in(0,1)$. 
More precisely, for the first- and second-order quantities, the deviations from the average of order $T^{-\delta}$ are controlled by exponential terms of order
$T\exp\left(-cT^{(1-\delta)/2}\right)$ and $T\exp\left(-cT^{(1-\delta)/3}\right),$
respectively, together with a term of order $\exp(-cT^{1-2\delta})$. 
Indeed, the choice $\delta=2/5$ in Theorem~\ref{concentration ineq} is made for convenience and can be modified to trade off the deviation rate against the strength of the tail bound. 
In particular, the resulting inequalities can be tailored to the rates required in subsequent arguments. 
The choice $\delta=2/5$ provides sufficient control for our purposes and is therefore fixed throughout the paper.
\end{remark}

We now state our first main theorem, which implies consistency of the penalized least-squares estimator to the true parameter, as $T\to\infty$. 
The proof of Theorem~\ref{consistency thm} can be found in Appendix~\hyperref[app B]{B}.

\begin{theorem}[Consistency]\label{consistency thm}
Consider an orthonormal sequence $(\varphi_l)_{l\in\mathbb N}\subset L^2(\mathscr X)$, an orthogonal sequence $(\chi_m)_{m\in\mathbb N}\in L^2([0,b])$ and a graphon Hawkes process $N$ on $[0,T]$ starting on an empty history, with a conditional intensity density as defined in (\ref{conditional int dens}), satisfying Assumptions \ref{ass approx}--\ref{ass PSD C0}. Let $\lambda_\theta$ be defined in (\ref{lambda_c def}), an suppose that we estimate $\theta$ by $\hat\theta$ as given in (\ref{lasso LS contrast}).
Select $M,H$ and $T$ such that $MH=o(T^{2/5})$ and $M^2H\log(T)/\sqrt T=o(1)$. Set $\eta:=c_1\log(T)/\sqrt T$ for some constant $c_1>0$. Then we can find constants $c_2,c_3>0$ and $q_1,q_2,q_3>0$ such that 
\begin{equation}
\frac1{T}\int_{(0,T)\times\mathscr X}(\lambda_{\hat \theta}(t,x)-\lambda(t,x))^2\,\mathrm dt\,\mathrm dx\leq c_2 M^{-2d}H^{-d}+c_3\frac{\log(T)^2M^2H}{T}
\label{thm consistency bound}
\end{equation}
holds with probability at least 
$1-q_1T\exp(-q_2T^{1/5})-q_3/T$.
\end{theorem}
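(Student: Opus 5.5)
We follow the standard lasso oracle-inequality route: a basic inequality from optimality of $\hat\theta$, control of a stochastic noise term via Theorem~\ref{concentration ineq}, and a restricted lower eigenvalue bound for $G$ via Lemma~\ref{integrated diff and SDP}.

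\emph{Basic inequality.} By the definition of $\hat\theta$ in (\ref{lasso LS contrast}), $\mathcal C(\omega_T,\hat\theta)\le\mathcal C(\omega_T,\tilde\theta)$, with $\tilde\theta$ from Assumption~\ref{ass approx}. We write $N(\mathrm dt,\mathrm dx)=\lambda(t,x)\,\mathrm dt\,\mathrm dx+\mathrm dM(t,x)$, where $M$ is the compensated martingale measure, and complete the square. This gives
\begin{equation*}
\frac1{2T}\|\lambda_{\hat\theta}-\lambda\|^2\le \frac1{2T}\|\lambda_{\tilde\theta}-\lambda\|^2+\frac1T\int(\lambda_{\hat\theta}-\lambda_{\tilde\theta})\,\mathrm dM+\eta\big(\|\tilde\beta\otimes\tilde\gamma\|_{\ell^1}-\|\hat\beta\otimes\hat\gamma\|_{\ell^1}\big),
\end{equation*}
where $\|\cdot\|$ denotes the $L^2((0,T)\times\mathscr X)$ norm. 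The first term on the right is at most $c\,M^{-2d}H^{-d}$ on an event of probability $1-q_1T\exp(-q_2T^{1/5})$, by Assumption~\ref{ass approx}. The martingale term equals $(\Delta\theta^\otimes)^\top\xi$ with $\xi_{k,n}=\frac1T\int\varphi_k(x)\Psi_n(t)\,\mathrm dM$. Hence it is bounded by $\|\xi\|_{\ell^\infty}\big(\|\Delta\alpha\|_{\ell^1}+\|\Delta(\beta\otimes\gamma)\|_{\ell^1}\big)$.

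\emph{Noise control.} This is the main obstacle, because $\lambda$ is not almost surely bounded. We would truncate. Using Theorem~\ref{concentration ineq}, applied to $y_1$ with the bounded integrand $f_1\equiv 1$ and to local counts on windows of length $b$, we obtain that with probability $1-q_1T\exp(-q_2T^{1/5})$ all counts $N([t-b,t]\times\mathscr X)$ for $t\le T$ are at most $C\log T$. A union bound over a grid, with tail control from the thinning/branching representation, can be used here. On that event, $|\Psi_n(t)|\le \|\varphi_l\|_\infty\mathfrak h_\infty C\log T\lesssim\sqrt M\log T$, so we may replace the integrand by its truncated, predictable version. We then apply a Bernstein inequality for martingales (or Markov's inequality on the second moment, which produces the $q_3/T$ term) to each coordinate of $\xi$ and take a union bound over the $M+M^2H$ coordinates. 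The goal is $\|\xi\|_{\ell^\infty}\le \eta/2$ with $\eta=c_1\log T/\sqrt T$. Here the quadratic variation is $\frac1{T^2}\int\varphi_k^2\Psi_n^2\lambda$, which is $O(1/T)$ up to logarithmic factors, and the $\sqrt M$ growth from $\varphi_k$ is absorbed via $MH=o(T^{2/5})$. We expect the careful bookkeeping of powers of $\log T$ and $M$ to take most of the effort.

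\emph{Eigenvalue and conclusion.} On $\{\|\xi\|_\infty\le\eta/2\}$ the standard lasso manipulation gives
\begin{equation*}
\tfrac1{2T}\|\lambda_{\hat\theta}-\lambda\|^2\le cM^{-2d}H^{-d}+\tfrac{3\eta}{2}\big(\|\Delta\alpha\|_{\ell^1}+\|\Delta(\beta\otimes\gamma)\|_{\ell^1}\big).
\end{equation*}
We need $\sigma_{\min}(G)\ge c>0$ with high probability. For the population Gram matrix $\mathbb E G$, the bound follows from Assumption~\ref{ass PSD C0}, $\underline\nu>0$ and the orthogonality of $(\varphi_l)$ and $(\chi_m)$: the diagonal (Poisson) part of the second moment contributes $\gtrsim\underline\nu\,\mathfrak h$-type quantities, and $C^{(0)}\succeq0$. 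For the deviation, each entry of $G-\mathbb EG$ is of the form $y_1$ or $y_2$ with bounded $f$, since $\chi_m$ has support in $[0,b]$. Theorem~\ref{concentration ineq} then gives entrywise deviations $O(MT^{-2/5})$ after rescaling by $\|\varphi\|_\infty^2\le cM$, so $\|G-\mathbb EG\|_{op}\lesssim M\cdot MH\,T^{-2/5}$; we would refine this with Gershgorin-type row sums and the condition $MH=o(T^{2/5})$ to make it $o(1)$. Lemma~\ref{integrated diff and SDP} then yields $\|\Delta\alpha\|_{\ell^1}+\|\Delta(\beta\otimes\gamma)\|_{\ell^1}\le\sqrt{\mathscr S/\sigma_{\min}}\,T^{-1/2}\|\lambda_{\hat\theta}-\lambda_{\tilde\theta}\|$ with $\mathscr S\le2M^2H$. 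We combine this with the triangle inequality and $2ab\le a^2/4+4b^2$, which gives the bound $c_2M^{-2d}H^{-d}+c_3\eta^2M^2H$, i.e. (\ref{thm consistency bound}). The failure probabilities of the events collect into $q_1T\exp(-q_2T^{1/5})+q_3/T$.
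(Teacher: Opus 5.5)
\textbf{Gap: the lower bound on $\sigma_{\min}(G)$.} This is the step that fails. Entrywise use of Theorem~\ref{concentration ineq} gives $\max_{i,j}|(G-\mathbb EG)_{ij}|\le CMT^{-2/5}$. With only entrywise information, any Frobenius or Gershgorin row-sum bound pays the full dimension, $\|G-\mathbb EG\|_{\mathrm{op}}\le C(1+MH)MT^{-2/5}$, so a ``Gershgorin refinement'' has nothing to gain. Under the theorem's hypotheses this bound need not vanish. Take $M\asymp T^{1/5}$ and $H\asymp T^{1/20}$: then $MH=o(T^{2/5})$ and $M^2H\log(T)/\sqrt T=o(1)$, but $M^2HT^{-2/5}\asymp T^{1/20}\to\infty$.

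The missing idea, used in Step~2 of Lemma~\ref{lemma min eigenvalue}, is to apply Theorem~\ref{concentration ineq} to the quadratic form $\Delta^\top(G-\mathbb EG)\Delta$ itself. Write it as a first-order plus a second-order functional of $N$, with test functions built from $\Delta$. By orthogonality of the $\chi_m$ and Cauchy--Schwarz, these test functions are bounded by $C\|\Delta\|_{\ell^2}^2\sup_y\sum_{l\le M}\varphi_l(y)^2$. There is no factor $H$ and no sum over $(1+MH)^2$ entries. The remaining factor is $CM$ for Haar-type bases, while Assumption~\ref{min eigenvalue ass}(i) alone only gives $CM^2$. The factor $MH$ appears only in the cross term with $\Delta_0$, which is where $MH=o(T^{2/5})$ enters.

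There is a trade-off. Your entrywise bounds are automatically uniform in $\Delta$. The quadratic-form bound holds for each fixed $\Delta$ and must still be made uniform over the sphere before you invoke the Rayleigh quotient.

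On the population side, the Poisson part and $C^{(0)}\succeq0$ only control $\Delta^{(MH)}$. The $\Delta_0$ direction requires splitting the mean part $\frac1T\int_0^T\bigl(\Delta_0+\sum_{l,m}\Delta_{l,m}\varpi_{l,m}(t)\bigr)^2\mathrm dt$ with $2|xy|\le\varsigma x^2+\varsigma^{-1}y^2$, as in Step~1 of that lemma.

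\textbf{Noise term: a different and partly cleaner route.} The rest of your skeleton matches the paper: the basic inequality, Lemma~\ref{integrated diff and SDP} with $\mathscr S\le 2M^2H$, and the closing algebra. Your handling of the noise term is genuinely different. The paper applies the exponential inequality to $Y_T=\int(\lambda_{\hat\theta}-\lambda_{\tilde\theta})(N(\mathrm dt,\mathrm dx)-\lambda\,\mathrm dt\,\mathrm dx)$ directly. It bounds the jumps and bracket through $\|\lambda_{\hat\theta}-\lambda_{\tilde\theta}\|$ and $\sigma_{\min}(G)$. That integrand depends on $\hat\theta$, a function of the whole path on $[0,T]$, so it is not predictable. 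Your vector $\xi$ has predictable integrands $\varphi_k\Psi_n$, and $\hat\theta$ enters only through the deterministic H\"older step. The price is a union bound over $M+M^2H\le 2\sqrt T$ coordinates. This is harmless if each coordinate has a polynomial tail $T^{-A}$ with $A$ large.

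To make it work, fix the following.
\begin{enumerate}
\item Theorem~\ref{concentration ineq} controls global averages, not counts in windows of length $b$. At level $B\asymp\log T$, the truncation event cannot be shown to fail with probability $T\exp(-qT^{1/5})$. Instead, dominate $N$ by a univariate Hawkes process and use its exponential moments, a Chernoff bound and a union over $\lceil T/b\rceil$ windows. For $B=(2/q)\log T$ this gives failure probability $O(Te^{-qB})=O(1/T)$, which is exactly the source of the $q_3/T$ term.
\item Markov's inequality on second moments does not survive the union over $M^2H$ coordinates. You need a Freedman/Bernstein inequality for the stopped martingale.
\item For the bracket of $T\xi_{k,n}$ to be of order $T\log T$, you need $\max_n G_{n,n}\le C$ with high probability. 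This follows from bounded $\mathbb EG_{n,n}$ plus entrywise concentration, which is fine since $MT^{-2/5}\to0$. The crude bound $\Psi_n^2\le CMB^2$ loses a factor $M$ that $MH=o(T^{2/5})$ does not absorb, and it would spoil the rate.
\item Since $c_1$ in $\eta$ is arbitrary, do not insist on $\|\xi\|_{\ell^\infty}\le\eta/2$. Without a cone condition you only need $(\|\xi\|_{\ell^\infty}+\eta)$ times the $\ell^1$ norm of the difference, which gives the same rate.
\end{enumerate}
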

\begin{remark}
The convergence rate in Theorem \ref{consistency thm} is optimized by matching the rates of the two contributing terms. This is achieved by choosing $$M^2H\asymp\left(\frac T{\log(T)^{2}}\right)^{\frac1{1+d}}.$$
Since $d\geq2$ (see Assumption \ref{ass approx}), this choice of $M^2H$ is compatible with the conditions of Theorem \ref{consistency thm}.
\end{remark}
\begin{remark}
The error bound in (\ref{thm consistency bound}) can be decomposed into two components. The first term corresponds to an {\em approximation error}, arising from representing the unknown background intensity and excitation kernels by a finite-dimensional basis; see Assumption \ref{ass approx}. 
The second term corresponds to the {\em estimation error}, capturing the stochastic variability in estimating the model parameters from a finite observation window. This decomposition closely parallels the bias–variance trade-off in high-dimensional regression, where the approximation error is driven by model complexity and smoothness assumptions, while the estimation error depends on the effective sample size and sparsity of the underlying model (\cite{Raskutti2012}). 
The different logarithmic factor in the second term is due to the concentration inequality for Hawkes processes that we use (rather than classical exponential inequalities for sub-Gaussian random variables in high-dimensional regression tasks).
\end{remark}

\begin{remark}
In the proof of Theorem \ref{consistency thm} in Appendix~\hyperref[app B]{B}, we define in \eqref{martingale Y_T} the martingale
$$Y_T := \int_{(0,T)\times\mathscr X} (\lambda_{\hat \theta}(t,x) - \lambda_{\tilde \theta}(t,x))(N(\mathrm dt,\mathrm dx) - \lambda(t,x)\,\mathrm dt\,\mathrm dx),$$ after which we aim to bound $I_2 = \frac{1}{T} Y_T$. 
It would have been considerably simpler if we could bound $I_2$ directly using exponential inequalities for martingales. However, doing so would require almost sure bounds on $\langle Y, Y \rangle_T$ and on the integrand $\lambda_{\hat \theta}(t, x) - \lambda_{\tilde \theta}(t, x)$, which in turn necessitate restrictive model assumptions such as $|\lambda(t, x)| \leq \lambda_{\max}$ for all $t, x$, a.s., cf.\ \cite{Cai2024}. 
We deliberately avoid imposing such assumptions. 
In contrast to \cite{Cai2024}, this choice entails a more intricate analysis involving truncation arguments, but it also allows for a substantially more general framework. 
Moreover, our approach can be applied to \cite{Cai2024} as well, showing that their assumption $\lambda(\cdot) \leq \lambda_{\max}$ a.s.\ is in fact unnecessary.
\end{remark}

\begin{remark}\label{possible extensions remark}
As the proof shows, Theorem \ref{consistency thm} allows for some variations and extensions. Consider $\theta^\otimes$ defined in (\ref{theta tens}).
Equation (\ref{Frob norm bound}) shows that if we penalize $\alpha$ as well, i.e., if $\hat \theta$ is calculated as 
\begin{align}
&(\hat\alpha,\hat\beta,\hat\gamma)=\hat \theta\nonumber\\
&\in\argmin_{\substack{\theta\in\mathbb R^{M}\times\mathbb R^{M\times M}\times\mathbb R^H\\\|\gamma\|_{\ell^1}=1}}\left\{\frac1T\left(-\int_{(0,T)\times\mathscr X}\lambda_\theta(t,x)\,N(\mathrm dt,\mathrm dx)+\frac12\int_{(0,T)\times\mathscr X}\lambda_\theta(t,x)^2\,\mathrm dt \,\mathrm dx\right)+\eta\|\theta^\otimes\|_{\ell^1}\right\},\label{lasso LS contrast alternative}
\end{align}
then we obtain the same probabilistic error bounds as in Theorem \ref{consistency thm}. 

Furthermore, the $\ell^1$ penalty may be replaced by an $\ell^2$-penalty, an elastic net penalty, or a standardized group lasso penalty (see \cite{Simon2013}), yielding similar probabilistic error bounds as in Theorem~\ref{consistency thm}.



\end{remark}

\section{Sparseness structure recovery}\label{sparsistency section}
In this section, we establish \emph{sparsistency}: consistency for the recovery of the sparsity structure of the graphon $W$, which characterizes the network structure of the graphon Hawkes process $N$. To formalize the notion of the ``true'' sparsity structure, we introduce a population analogue of the estimated coefficients $\beta_{k,l}$. Since our goal is to recover the sparsity structure of $W$, these population coefficients will serve as the basis for defining the underlying interaction structure.

\begin{definition}\label{active set def}
The \emph{active set} $\mathcal E$ is defined as the set of coordinates that have nonzero coefficients w.r.t.\ the true graphon $W$, i.e., 
\begin{equation}\label{active set eq}
\mathcal E:=\left\{(k,l)\in[M]^2: \int_{\mathscr X}\int_{\mathscr X}W(x,y)\varphi_k(x)\varphi_l(y)\,\mathrm dx\,\mathrm dy\neq0\right\}.
\end{equation}
Let $\mathcal S:=|\mathcal E|$ denote the cardinality of $\mathcal E$.
Furthermore, we write $\mathcal E_k:=\{l\in[M]:(k,l)\in\mathcal E\}$. 
Suppose that we estimate $\theta$ by $\hat\theta$ as given in (\ref{lasso LS contrast}).  We define the \emph{estimated active set} as 
\begin{equation}\label{est active set eq}
\hat{\mathcal E}:=\hat{\mathcal E}_T:=\left\{(k,l)\in[M]^2: \hat\beta_{k,l}\neq0\right\}.
\end{equation}
\end{definition}
In this section, we show that our proposed lasso method can correctly identify the true sparseness structure of the underlying graphon with probability tending to $1$ as $T\to\infty$. We introduce two more assumptions. The first is what is usually referred to as the \emph{minimal signal condition}; the second is what is known as the \emph{irrepresentable condition}.
\begin{assumption}\label{minimal signal}
Let $\tilde \theta=(\tilde\alpha,\tilde\beta,\tilde\gamma)\in\mathbb R^{M}\times\mathbb R^{M\times M}\times\mathbb R^H$ with $\|\tilde\gamma\|_{\ell^1}=1$ as in Assumption \ref{ass approx}. Suppose that there exist $\beta_{\min},\gamma_{\max}>0$ independent of $T$ such that $\min_{(k,l)\in\mathcal E}|\tilde \beta_{k,l}|\geq\beta_{\min}$ and $\max_{m\in[H]}|\tilde\gamma_m|\geq\gamma_{\max}$.
\end{assumption}

In the proof of Theorem \ref{recovery thm}, we argue that a small value of $|\Delta(\beta\otimes\gamma)_{k,l,m}| = |\tilde\beta_{k,l}\tilde\gamma_m - \hat\beta_{k,l}\hat\gamma_m|$ for $k,l \in \mathcal E$ implies that $\hat\beta_{k,l} \neq 0$. To this end, it is necessary to rule out the possibility that $\tilde\beta_{k,l}\tilde\gamma_m$ can be arbitrarily close to zero.
Note that the condition $\max_{m\in[H]}|\tilde\gamma_m|\geq\gamma_{\max}$ for $\gamma_{\max}:=1/H$ is immediate in case $H$ is independent of $T$. 

\begin{assumption}\label{irrepresentable cond}
We can find constants $c, q_1, q_2>0$ such that for all $k\in[M]$, \begin{equation}
\max_{\substack{l\in \mathcal E_k^{\mathsf{c}}\\ m\in[H]}}\left(G_{(l,m),0}^2+\sum_{\substack{l'\in\mathcal E_k\\m'\in[H]}}G_{(l,m),(l',m')}^2\right)^{1/2}\leq \frac c{\mathcal SH}
\end{equation}
on an event of probability at least  $1-q_1T\exp(-q_2T^{1/5})$.
\end{assumption}

This irrepresentable condition is a condition on the covariances between active and nonactive orthonormal functions.
Intuitively, this condition requires that irrelevant components for a given node cannot be well approximated by linear combinations of the relevant ones. 
This separation ensures that irrelevant components do not appear predictive merely because they are strongly correlated with the relevant processes, which is crucial for reliable support recovery with sparsity-inducing penalties; cf.\ e.g., \cite{
Ravikumar2009, ZhaoYu2006}.

We now state our second main theorem, which establishes \emph{sparsistency} of the proposed penalized least-squares estimator: it recovers the true sparsity structure of $W$ with probability tending to $1$ as $T\to\infty$. 
The proof of Theorem~\ref{recovery thm} can be found in Appendix~\hyperref[app B]{B}.

\begin{theorem}[Sparseness structure recovery]\label{recovery thm}
Consider an orthonormal sequence $(\varphi_l)_{l\in\mathbb N}\subset L^2(\mathscr X)$, an orthogonal sequence $(\chi_m)_{m\in\mathbb N}\in L^2([0,b])$ and a graphon Hawkes process $N$ on $[0,T]$ starting on an empty history, with a conditional intensity density as defined in (\ref{conditional int dens}) satisfying Assumptions \ref{ass approx}--\ref{irrepresentable cond}. Let $\lambda_\theta$ be defined in (\ref{lambda_c def}), an suppose that we estimate $\theta$ by $\hat\theta$ as given in (\ref{lasso LS contrast}). Let $d$ be as in Assumption \ref{ass approx}, and suppose that $d>2$. Select $M$, $H$, and $T$  such that
\begin{itemize}
\item $M^2H\log(T)/\sqrt T=o(1)$;
\item $MH=o(T^{2/5})$;
\item $\sqrt T/(M^d\log(T))=\mathcal O(1)$;
\item $M\to\infty$ as $T\to\infty$.
\end{itemize}
Then  we can find constants $c, q_1,q_2,q_3>0$ and a (large) $T^*$ such that if we select $\eta:=c\log(T)/\sqrt T$, then
\begin{equation}
\mathcal E=\hat{\mathcal E}
\label{thm recovery eq}
\end{equation}
holds for any $T\geq T^*$, with probability at least 
$1-q_1T\exp(-q_2T^{1/5})-q_3/T$.
\end{theorem}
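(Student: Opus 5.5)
We plan a primal-dual witness (PDW) argument, carried out on the high-probability event $\Omega_T$ on which Theorem~\ref{consistency thm}, the concentration bounds of Theorem~\ref{concentration ineq}, Assumption~\ref{ass approx}, and Assumption~\ref{irrepresentable cond} all hold. By a union bound, $\Omega_T$ has probability at least $1-q_1T\exp(-q_2T^{1/5})-q_3/T$.

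\textbf{Step 1 (restricted problem and witness).} Because the loss (\ref{lasso LS contrast convex}) decouples over $k\in[M]$, with block $k$ depending only on $\theta^\otimes_k=(\alpha_k,\beta_{k,\cdot}\otimes\gamma)$ up to the shared $\gamma$, we first fix $\gamma=\hat\gamma$. The problem is then a lasso in the product coordinates $\beta_{k,l}\hat\gamma_m$, with design Gram matrix $G$ and response $\vartheta_{k,\cdot}$. For each $k$ we solve the restricted problem in which $\beta_{k,l}=0$ for $l\in\mathcal E_k^{\mathsf c}$, obtaining $\check\theta$. By Assumption~\ref{ass approx} the true approximant $\tilde\beta$ satisfies the same support constraint, so $\tilde\theta$ is feasible for the restricted problem. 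The consistency argument (Theorem~\ref{consistency thm} together with Lemma~\ref{integrated diff and SDP}, using the lower bound on $\sigma_{\min}(G)$ from Lemma~\ref{lemma min eigenvalue}) then applies verbatim on the smaller support. It gives an $\ell^1$ bound on the active block:
\[
\|\Delta\alpha\|_{\ell^1}+\|\Delta(\beta\otimes\gamma)\|_{\ell^1}\lesssim \sqrt{\mathcal S H}\Bigl(M^{-d}H^{-d/2}+\log(T)\sqrt{\mathcal S H/T}\Bigr).
\]
From the stationarity conditions on the active set we then read off subgradients $\check z_{k,l,m}\in\partial|\cdot|$ there.

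\textbf{Step 2 (strict dual feasibility).} For $l\in\mathcal E_k^{\mathsf c}$ and $m\in[H]$ we define the candidate dual variable $\check z_{k,l,m}=\eta^{-1}\bigl(\vartheta_{k,(l,m)}-(G\check\theta^\otimes_k)_{(l,m)}\bigr)$ and must show $|\check z_{k,l,m}|<1$. Writing $\vartheta_{k,n}$ as the noise term $\frac1T\int\varphi_k\Psi_n(N(\mathrm dt,\mathrm dx)-\lambda\,\mathrm dt\,\mathrm dx)$ plus $(G\tilde\theta^\otimes_k)_n$ plus an approximation remainder, the residual splits into three pieces:
\begin{itemize}
\item[(a)] The martingale noise. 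Theorem~\ref{concentration ineq} with a truncation argument controls it at order $\log(T)/\sqrt T$ uniformly over the $M^2H$ coordinates, and this is $\le\eta/4$ for $c$ large.
\item[(b)] The approximation bias. By Cauchy--Schwarz and Assumption~\ref{ass approx}, with the boundedness of $G$'s diagonal from concentration, it is of order $M^{-d}H^{-d/2}$. The condition $\sqrt T/(M^d\log T)=\mathcal O(1)$ makes this $\lesssim\eta$, so it can be made $\le\eta/4$ by the choice of constants.
\item[(c)] The cross term $\sum_{n\in\mathrm{active}}G_{(l,m),n}(\tilde\theta^\otimes_k-\check\theta^\otimes_k)_n$. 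It is bounded by the row norm in Assumption~\ref{irrepresentable cond} times the $\ell^2$ error of Step~1, hence at most $\frac{c}{\mathcal SH}\cdot\sqrt{\mathcal SH}\cdot(\text{error})$, which is $o(\eta)$ using $d>2$ and $M^2H\log T/\sqrt T=o(1)$.
\end{itemize}
Together these give $|\check z|\le 3/4<1$. Convexity of (\ref{lasso LS contrast convex}), and strict convexity on the active block because $\sigma_{\min}(G)>0$ there, then yield uniqueness: any minimizer $\hat\theta$ equals $\check\theta$, so $\hat{\mathcal E}\subseteq\mathcal E$. Handling the shared $\gamma$ and the constraint $\|\gamma\|_{\ell^1}=1$ needs care; we would argue at the optimal $\hat\gamma$ and use that block-wise optimality in $\beta$ given $\hat\gamma$ is necessary.

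\textbf{Step 3 (no false negatives).} For $(k,l)\in\mathcal E$, Assumption~\ref{minimal signal} provides an index $m$ with $|\tilde\beta_{k,l}\tilde\gamma_m|\ge\beta_{\min}\gamma_{\max}$. Step~1 gives $\max_m|\Delta(\beta\otimes\gamma)_{k,l,m}|\to0$ on $\Omega_T$, so for $T\ge T^*$ we have $|\hat\beta_{k,l}\hat\gamma_m|>0$, hence $\hat\beta_{k,l}\neq0$. This gives $\mathcal E\subseteq\hat{\mathcal E}$.

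\textbf{Main obstacle.} The hardest part will be Step 2(a): uniform control of the noise vector without an almost-sure bound on $\lambda$. We intend to reuse the truncation device from the proof of Theorem~\ref{consistency thm}. On the event where $\sup\lambda\lesssim\log T$, which has probability $1-q_3/T$ by the concentration bounds, we apply a martingale Bernstein inequality to each coordinate and take a union bound over the polynomially many coordinates. The rate matching between $\eta$, the irrepresentable constant $c/(\mathcal SH)$, and the Step~1 error is the delicate bookkeeping.
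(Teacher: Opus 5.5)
Your architecture matches the paper's. You restrict $\beta$ to $\mathcal E$, split the dual residual on $\mathcal E^{\mathsf c}$ into noise, approximation bias and a cross term (the paper's $D_3$, $D_2$, $D_1$), control them by truncation plus an exponential martingale inequality, by Cauchy--Schwarz with Assumption~\ref{ass approx}, and by Assumption~\ref{irrepresentable cond}, and get $\mathcal E\subseteq\hat{\mathcal E}$ from Assumption~\ref{minimal signal}. Your coordinatewise treatment of the noise (fixed, predictable $\Psi_{l,m}$, then $\|\hat\gamma\|_{\ell^1}=1$ and a union bound) is in fact cleaner than the paper's $D_3$, whose integrand $\Psi_{l,\hat\gamma}$ depends on the whole path and so is not predictable.

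\textbf{The $\gamma$ issue.} The genuine gap is the one you flag about $\gamma$, and Step~1 is inconsistent on it.
\begin{itemize}
\item If you fix $\gamma=\hat\gamma$, then $\tilde\theta$ (which carries $\tilde\gamma$) is not feasible for the restricted problem. The consistency argument therefore does not transfer, and Step~2(c) has no $\mathcal SH$-rate bound on $\check\theta^\otimes_k-\tilde\theta^\otimes_k$.
\item If you leave $\gamma$ free, as the paper does in (\ref{lasso LS contrast PDW}), $\tilde\theta$ is feasible but the problem is not convex. The set of rank-one tensors $\beta\otimes\gamma$ is not convex, nor is the constraint $\|\gamma\|_{\ell^1}=1$. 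Moreover $(\beta,\gamma)\mapsto(-\beta,-\gamma)$ leaves the objective unchanged, so minimizers are never unique.
\end{itemize}
Strict dual feasibility in the $\beta$-block then only shows that the witness is optimal in $(\alpha,\beta)$ for its own $\gamma$. It does not show that an arbitrary minimizer of (\ref{lasso LS contrast}) is supported on $\mathcal E$. The paper's proof does not supply this step either: after checking $|\hat g_{k,l}|<1$ it simply asserts that the restricted solution solves the full problem.

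The repair you hint at is workable in principle. Take any global minimizer. For its $\hat\gamma$, the problem in $(\alpha,\beta)$ is strictly convex on $\{\sigma_{\min}(G)\geq\Sigma_{\min}\}$, since $\|\beta_{k,\cdot}\otimes\hat\gamma\|_{\ell^2}=\|\beta_{k,\cdot}\|_{\ell^2}\|\hat\gamma\|_{\ell^2}$, so you can run the witness there. But you must then separately control the misalignment of $\hat\gamma$ with $\tilde\gamma$, up to sign and scale, at the sparse rate. Theorem~\ref{consistency thm} only delivers the $M^2H$ rate, and nothing in your proposal provides more. Your Step~3 is fine if you base it on Theorem~\ref{consistency thm} applied to the global minimizer, as in (\ref{delta beta bound}), rather than on Step~1.

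\textbf{The cross term.} The cross term in Step~2(c) is not $o(\eta)$. The Step~1 error grows linearly with the constant in $\eta$ (in the proof of Theorem~\ref{consistency thm}, $c_8=4(c_6+c_7)$). So the irrepresentable row norm times the $\ell^2$ error is of order $c\,\eta/\sqrt{\mathcal SH}$, up to a factor depending on $\Sigma_{\min}$. Even your own bound contains the term $\frac{c}{\mathcal SH}\sqrt{\mathcal SH}\,\log(T)\sqrt{\mathcal SH/T}\asymp\eta$, which neither $d>2$ nor $M^2H\log(T)/\sqrt T=o(1)$ affects. Enlarging the constant in $\eta$ does not help, because the ratio to $\eta$ stays bounded away from zero. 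What is needed is a margin: the constant $c$ in Assumption~\ref{irrepresentable cond} must be small compared with $\Sigma_{\min}\sqrt{\mathcal SH}$, in the spirit of the classical irrepresentable condition with slack. The paper's Step~3A has the same looseness, since its constant $c_6$ there grows with $c_7$.
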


\begin{remark}
    In our spatiotemporal point-process setting, we do not impose a uniform upper bound on the conditional intensity; consequently, the martingale increments are not uniformly bounded. We therefore require the localization argument of Step~3C (which parallels that of Theorem~\ref{consistency thm}): we truncate on events where the number of points in each interaction window is logarithmically bounded, control the complement using exponential moment bounds for a dominating Hawkes process, and apply an exponential inequality to the resulting stopped martingale.
\end{remark}

\section{Simulation study}\label{simulation study section}
In this section, we investigate the finite-sample performance and robustness of our  estimation procedure along three dimensions. 
We first consider four unperturbed models, chosen to span a range of sparsity levels and interaction structures within the specified finite-dimensional model class.
We then consider three perturbed versions of the first model, introducing progressively different forms of misspecification. 
Finally, we vary the observation horizon $T$ to assess the finite-sample convergence underlying the consistency results.

We consider an unperturbed model with $M=8$ basis functions for the spatial structure on $\mathscr X=[0,1]$ and $H=3$ basis functions for the temporal structure, with an observation horizon of $T=500\,000$. A relatively long observation horizon is natural in this setting, as our goal is to estimate an infinite-dimensional self-exciting model from event times and their spatial marks. 

To capture genuinely continuous spatial dynamics, we use orthogonal polynomials as the spatial basis functions. Specifically, we take the first $M=8$ Legendre polynomials on $\mathscr X=[0,1]$:
\begin{align}
\varphi_{n+1}(x)
&=\sqrt{2n+1}\,P_n(2x-1) \notag\\
&=(-1)^n\sum_{k=0}^{n}
\binom{n}{k}\binom{n+k}{k}(-x)^k,
\qquad x\in[0,1],\quad n\in\mathbb N_0,
\label{Haar wavelets}
\end{align}
where $(P_n)_{n\in\mathbb N_0}$ denotes the sequence of classical Legendre polynomials on $[-1,1]$.

Furthermore, the $H=3$ temporal functions that we consider are 
\begin{equation}
\chi_m(t)=\exp(-mt),\quad m\in[3].\label{exp basis functions}
\end{equation}
Even though the exponential functions in Equation (\ref{exp basis functions}) are not orthogonal, our method remains applicable; see Remark \ref{nonorthogonal temporal functions}.

Observe that in our setting we have $MH=24<190\approx T^{2/5}$, whereas $M^2H\log(T)/\sqrt T\approx 3.6$. Hence, the assumptions of Theorem \ref{consistency thm} and Theorem \ref{recovery thm} are only partially fulfilled. We therefore operate in a genuine finite-sample regime rather than in a near-asymptotic setting.

In this setup, we will perform simulations for seven different graphons, of which the first four are spanned by the thus described model, and of which the last three are perturbations of the first graphon that we will describe. 
For each of the models we consider, we set 
\begin{align}
\alpha=\begin{bmatrix}0.05&0&0&0&0&0&0&0\end{bmatrix}^\top;\quad\label{fixedalpha}
\gamma=\begin{bmatrix}1&0&0\end{bmatrix}^\top.
\end{align}
For each of the unperturbed graphons that are spanned by the model, we specify the matrix $\beta\in\mathbb R^8$ of coefficients below.  
We keep $\alpha$ and $\gamma$ fixed to isolate variation in the spatial interaction structure, for which sparsity and support recovery are of primary interest. 
The four choices of $\beta$ range from highly sparse to substantially denser structures, while varying the location and signs of the nonzero coefficients. 
They contain, respectively, $4$, $6$, $8$, and $28$ nonzero coefficients, thereby spanning interaction structures of markedly different complexity. 
This allows us to assess both estimation error and the ability of the $\ell^1$ penalty to recover supports across sparsity levels.

Finite sample performance is evaluated using $100$ simulations using  $T=500\,000$, initialized with seed \texttt{74973836}.
For each simulation, we generate a sample path, and estimate the coefficients using the estimation procedure proposed in this paper. As in Theorem \ref{consistency thm} and Theorem \ref{recovery thm}, we select $\eta\propto\log(T)/\sqrt T$; more specifically, we set $\eta=\log(T)/(4\sqrt T)$, since in our synthetic data we have events at approximately $1/4\ \mathrm{Hz}$.

 To assess consistency (Theorem \ref{consistency thm}), we report the mean and standard deviation of the estimated coefficients $\hat\alpha$, $\hat\beta$ and $\hat\gamma$. Additionally, we present a histogram of the Frobenius norm $\|\hat\beta-\beta\|_F$ between the estimated and true coefficients. For the perturbed cases where the graphon is given by $W_\beta$ plus a perturbation, we also compute $\|\hat\beta-\beta\|_F$, treating $\beta$ as the ``true'' coefficient. 

To evaluate sparsity structure recovery of the graphon, we define true positives (TP) as the correctly identified nonzero coefficients of $\beta$, false positives (FP) as coefficients that are zero in $\beta$ but detected as nonzero, and false negatives (FN) as nonzero coefficients of $\beta$ that are not correctly identified. Furthermore, we consider the false negative rate (FNR) and false positive rate (FPR), defined as FN divided by the number of nonzero coefficients in $\beta$, and FP divided by the number of zero coefficients in $\beta$, respectively. The $F_1$-score is defined by $F_1=2\mathrm{TP}/(2\mathrm{TP}+\mathrm{FN}+\mathrm{FP})$. The maximum possible $F_1$-score is $1$, which corresponds to perfect selection. This measure is widely used in machine learning to evaluate selection and classification accuracy; see, for example, \cite{Cai2024, Forman2003, Ho2012}. We classify a coefficient $\beta_{k,l}$ as nonzero if $|\beta_{k,l}|\geq 2\,\mathrm{sd}(\hat\beta_{k,l})$, where $\mathrm{sd}(\hat\beta_{k,l})$ denotes the sample standard deviation of $\hat\beta_{k,l}$ across the $100$ simulations. For the perturbed cases in which the graphon is $W_\beta$ plus a perturbation, coefficients are compared to those of the unperturbed component of the graphon as specified in (\ref{beta ex1}). 

In the following experiments, data are generated from a (perturbation of a) conditional intensity density corresponding to $\nu_\alpha$, $W_\beta$ and $h_\gamma$ as defined in (\ref{expansion basis finite}), using the parameters $\alpha$ and $\gamma$ given in (\ref{fixedalpha}), and the matrix $\beta$ taking the shape of one of the following four matrices:
\begin{align}
\beta^{(1)}&=\begin{bmatrix}
\frac{3}{4} & \frac{1}{4} & 0 & 0 & 0 & 0 & 0 & 0 \\
0 & 0 & 0 & 0 & 0 & 0 & 0 & 0 \\
0 & 0 & 0 & 0 & 0 & 0 & 0 & 0 \\
0 & 0 & 0 & 0 & 0 & 0 & 0 & 0 \\
0 & 0 & 0 & 0 & 0 & 0 & 0 & 0 \\
0 & 0 & \frac{3}{20} & 0 & 0 & 0 & 0 & 0 \\
0 & 0 & 0 & 0 & 0 & 0 & 0 & 0 \\
0 & 0 & -\frac{3}{20} & 0 & 0 & 0 & 0 & 0
\end{bmatrix};\quad\label{beta ex1}
\beta^{(2)}&&=\begin{bmatrix}
\frac{7}{10} & 0 & 0 & \frac{1}{4} & 0 & 0 & 0 & 0 \\
0 & 0 & 0 & 0 & 0 & 0 & 0 & 0 \\
0 & 0 & 0 & 0 & 0 & 0 & 0 & 0 \\
0 & 0 & 0 & 0 & 0 & 0 & 0 & 0 \\
0 & -\frac{1}{5} & 0 & 0 & 0 & 0 & 0 & 0 \\
0 & -\frac{1}{5} & 0 & 0 & 0 & 0 & 0 & 0 \\
0 & -\frac{1}{5} & 0 & 0 & 0 & 0 & 0 & 0 \\
0 & -\frac{1}{5} & 0 & 0 & 0 & 0 & 0 & 0
\end{bmatrix};\\
\beta^{(3)}&=\begin{bmatrix}
\frac{3}{4} & 0 & 0 & \frac{1}{6} & 0 & 0 & 0 & 0 \\
0 & 0 & 0 & 0 & 0 & 0 & 0 & 0 \\
0 & 0 & 0 & 0 & 0 & 0 & 0 & 0 \\
0 & 0 & 0 & 0 & 0 & 0 & 0 & 0 \\
0 & -\frac{1}{6} & 0 & 0 & \frac{1}{12} & 0 & 0 & 0 \\
0 & -\frac{1}{6} & 0 & 0 & 0 & 0 & 0 & 0 \\
0 & -\frac{1}{6} & 0 & 0 & 0 & 0 & \frac{1}{12} & 0 \\
0 & -\frac{1}{6} & 0 & 0 & 0 & 0 & 0 & 0
\end{bmatrix};\quad
\beta^{(4)}&&=\begin{bmatrix}
\frac{4}{5} & \frac{1}{12} & -\frac{1}{12} & \frac{1}{12} & 0 & 0 & 0 & 0 \\
0 & 0 & 0 & 0 & 0 & 0 & 0 & 0 \\
0 & 0 & 0 & 0 & 0 & 0 & 0 & 0 \\
0 & 0 & 0 & 0 & 0 & 0 & 0 & 0 \\
0 & -\frac{1}{12} & -\frac{1}{12} & 0 & -\frac{1}{12} & \frac{1}{12} & \frac{1}{12} & \frac{1}{12} \\
0 & -\frac{1}{12} & -\frac{1}{12} & 0 & -\frac{1}{12} & \frac{1}{12} & \frac{1}{12} & \frac{1}{12} \\
0 & -\frac{1}{12} & -\frac{1}{12} & 0 & \frac{1}{12} & -\frac{1}{12} & -\frac{1}{12} & -\frac{1}{12} \\
0 & -\frac{1}{12} & -\frac{1}{12} & 0 & \frac{1}{12} & -\frac{1}{12} & -\frac{1}{12} & -\frac{1}{12}
\end{bmatrix}.\nonumber
\end{align} 

\textbf{Unperturbed examples 1--4.}
In unperturbed example $i$, we consider $W_\beta$ with $\beta=\beta^{(i)}$ as defined in (\ref{beta ex1}), for $i\in[4]$. In these unperturbed examples, Assumption \ref{ass approx} holds trivially, while Assumption \ref{stability ass} is satisfied by our choice of the parameters $\alpha$, $\beta$, and $\gamma$. Assumption \ref{min eigenvalue ass} follows from the choice of basis functions, and Assumption \ref{ass PSD C0} holds because we consider a self-exciting process; see the discussion following that assumption.

\textbf{Perturbed examples.} To assess robustness to misspecification, we perturb $W_{\beta^{(1)}}$ in three different ways. The perturbations are chosen to probe distinct forms of approximation error: a component outside the fitted spatial basis, a high-frequency deterministic component, and a rough stochastic component. Thus, the experiments move from relatively structured misspecification to increasingly irregular departures from the fitted model class.

\begin{enumerate}
    \item  We first consider a structured misspecification outside the fitted spatial basis. More specifically, with $\varphi_{k}(x)$ as in (\ref{Haar wavelets}), we consider the dynamics  driven by 
\begin{equation}\label{W 1st perturbed}
W_{9,\ldots,16}(x,y):=W_{\beta^{(1)}}(x,y)+\varphi_9(x)\left(\sum_{k=9}^{16}\varphi_k(y)\right)/3. 
\end{equation}
This means that the true graphon is actually spanned by the truncated basis $\{\varphi_k\}_{k\in[16]}$, but not by the smaller basis $\{\varphi_k\}_{k\in[8]}$. This is a misspecification as the chosen basis is too small.
    \item We next consider a high-frequency deterministic misspecification, which is less well aligned with the low-dimensional spatial basis. More specifically, we consider dynamics  driven by 
\begin{equation}\label{W 2nd perturbed}
W_{\mathrm{Trig}}(x,y)=W_{\beta^{(1)}}(x,y)+2\sin(20x)\sin(30y)/3. 
\end{equation}
Since the perturbation consists of a high-frequency trigonometric function, one would hope that we can detect the signal spanned by $\{\varphi_k\}_{k\in[8]}$ despite this periodic ``noise''.
    \item Finally, we consider a rough stochastic perturbation, representing a substantially less regular departure from the fitted model class.  We consider a misspecification similar to the previous two example, but we take it a bit further by considering a Gaussian perturbation. More specifically, for each simulation, we generate standard Brownian motions $B_1$ and $B_2$ on $[0,1]$, and we consider dynamics driven by 
\begin{equation}\label{W 3rd perturbed}
W_{\mathrm{BM}}(x,y)=W_{\beta^{(1)}}(x,y)+B_1(x)B_2(y)/3.
\end{equation}
One would hope that we can detect the signal spanned by $\{\varphi_k\}_{k\in[8]}$ despite the perturbation by this noise.
\end{enumerate}

In Appendix~\hyperref[app C]{C}, we report the sample mean of estimated coefficients together with their standard deviations, computed across the $100$ simulations. For each of the unperturbed examples, and for the first two perturbed examples, the true coefficients are recovered with good accuracy, although some parameter shrinkage is observed. This shrinkage disappears in the limit $T\to\infty$, by Theorem \ref{consistency thm}, but is well known to arise in finite samples when using lasso-type procedures. In the case of Brownian perturbations, the coefficients remain recoverable on average; however, the standard errors are considerably larger than in the other examples, making the estimates less precise and less informative for individual simulations. In Table \ref{consistency table}, we report the average and standard deviation of the Frobenius norm $\|\hat\beta-\beta\|_F$ across $100$ simulated sample paths for the first unperturbed example, using $T=1\,000, 5\,000, 10\,000, 50\,000, 100\,000, 500\,000$. The systematic decrease of the Frobenius error with $T$ provides a direct finite-sample illustration of the consistency result: although the observation horizon is not large enough for the asymptotic approximation to be particularly sharp in the most complex example, the estimation error decreases steadily as the amount of data increases.

In Figure \ref{fig1} we present a histogram of the Frobenius norm $\|\hat\beta-\beta\|_F$ across the $100$ sample paths for each of the four unperturbed examples. The distance between the estimate and the true matrix of coefficients is small, but  positive. As previously noted, finite-sample parameter shrinkage leads to small but nonzero values of $\|\hat\beta-\beta\|_F$. The histogram appears approximately bell-shaped, indicating regular behavior of the estimator. Furthermore, in the fourth example, where $\beta$ contains many nonzero coefficients, the Frobenius norm $\|\hat\beta-\beta\|_F$ is larger than in the other examples, which is consistent with shrinkage affecting a greater number of coordinates.

\begin{table}[ht]
\centering
\begin{tabular}{c|c|c}
\hline
$T$ & $\mathrm{mean}(\|\hat\beta-\beta\|_F)$ & $\mathrm{sd}(\|\hat\beta-\beta\|_F)$ \\
\hline
$1\,000$ & .4221 & .1264\\
$5\,000$ & .3404 & .0690\\
$10\,000$ & .3202 & .0673\\
$50\,000$ & .2246 & .0280\\
$100\,000$ & .1766 & .0308\\
$500\,000$ & .0953 & .0207\\
\hline
\end{tabular}
\caption{For different time horizons $[0,T]$, we estimate $\beta$ by $\hat{\beta}$ using $100$ simulated sample paths of the first unperturbed example. We report the sample mean and standard deviation of the Frobenius norm $\|\hat{\beta} - \beta\|_F$ across these paths. The results indicate that the Frobenius norm decreases as $T$ increases.}
\label{consistency table}
\end{table}

\begin{figure}[ht]\centering
\begin{tabular}{cc}
\subfloat[\scriptsize First unperturbed graphon]{\includegraphics[width = 3in]{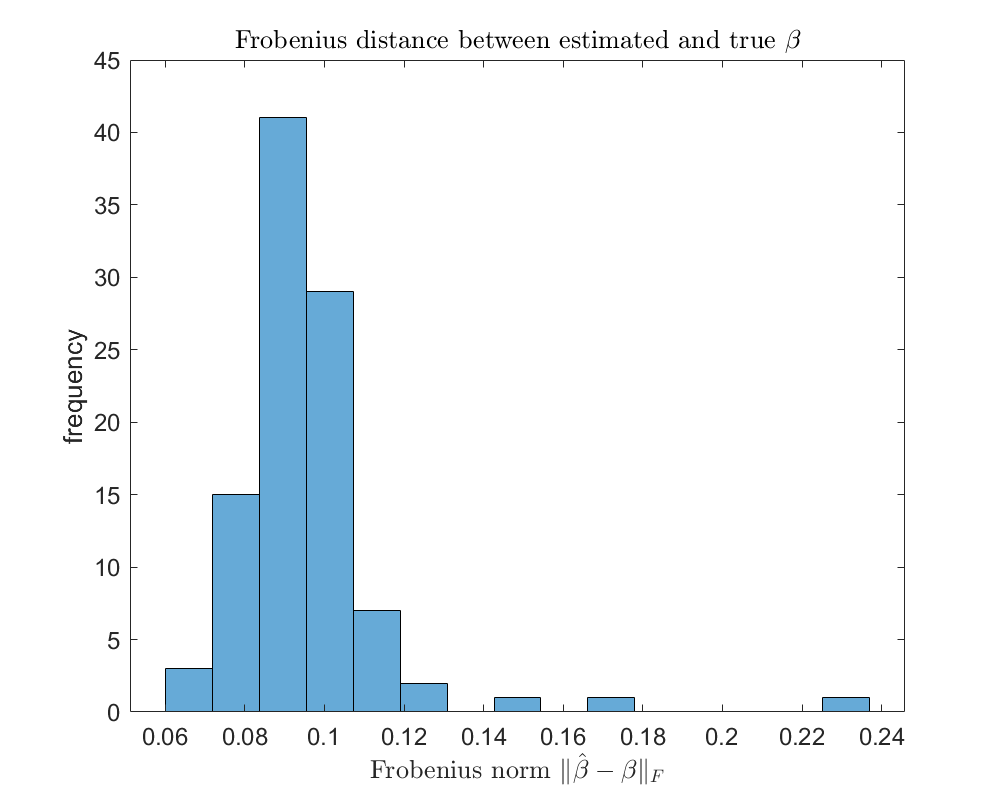}} &
\subfloat[\scriptsize Second unperturbed graphon]{\includegraphics[width = 3in]{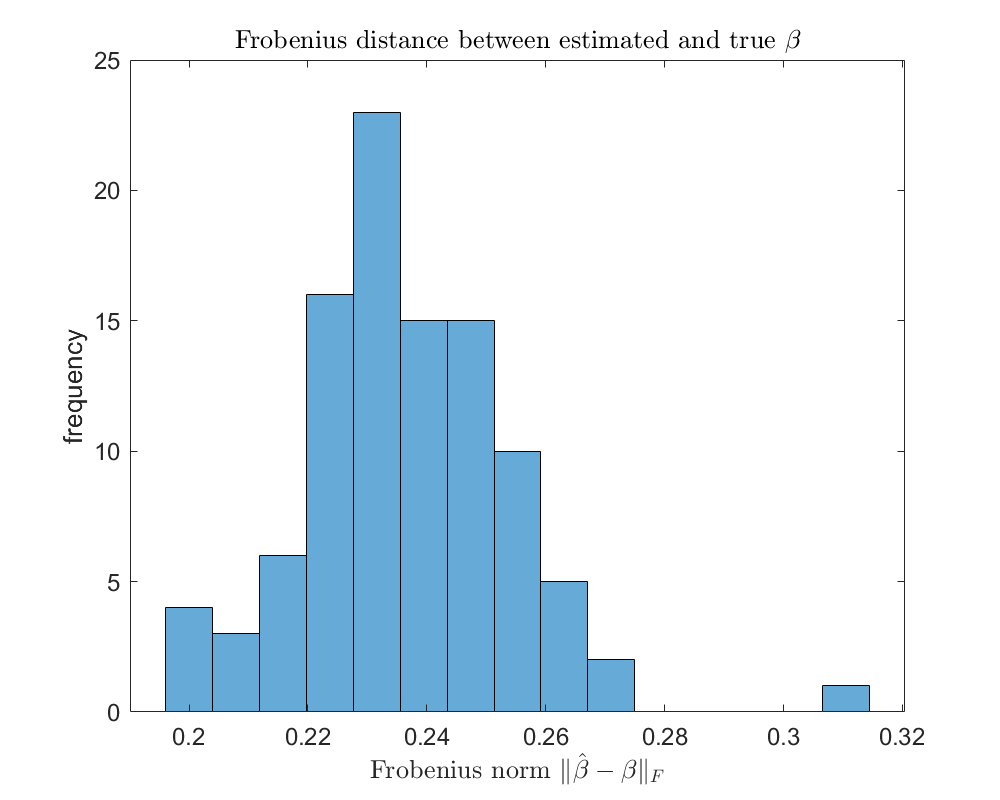}} \\
\subfloat[\scriptsize Third unperturbed graphon]{\includegraphics[width = 3in]{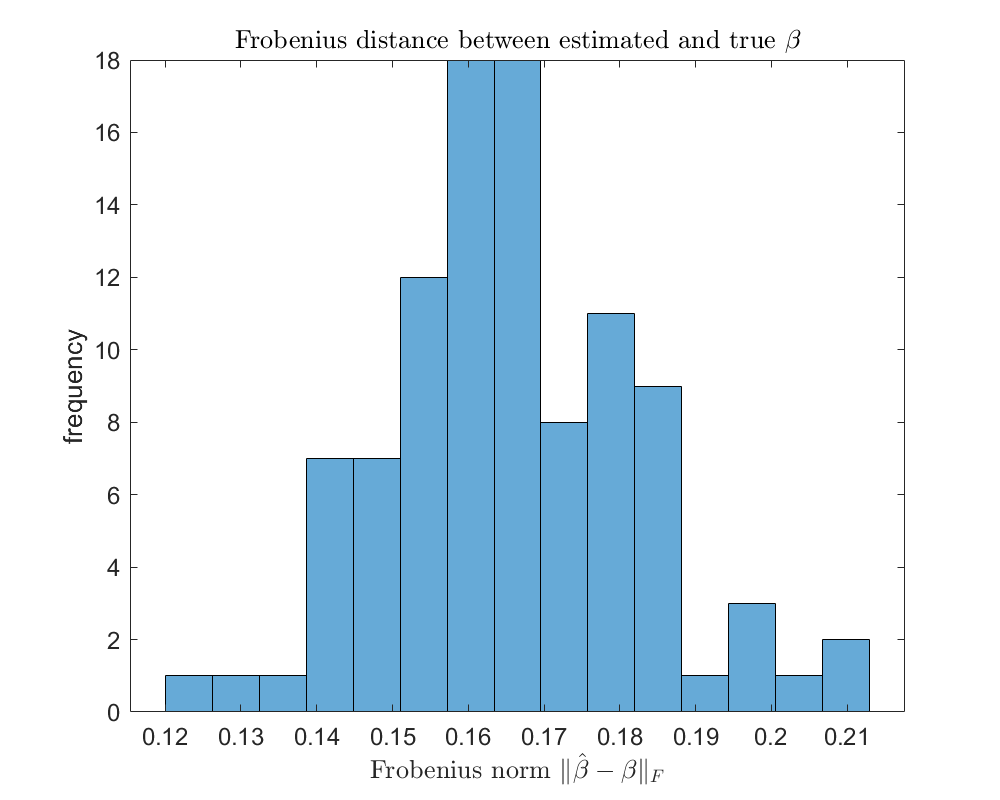}} &
\subfloat[\scriptsize Fourth unperturbed graphon]{\includegraphics[width = 3in]{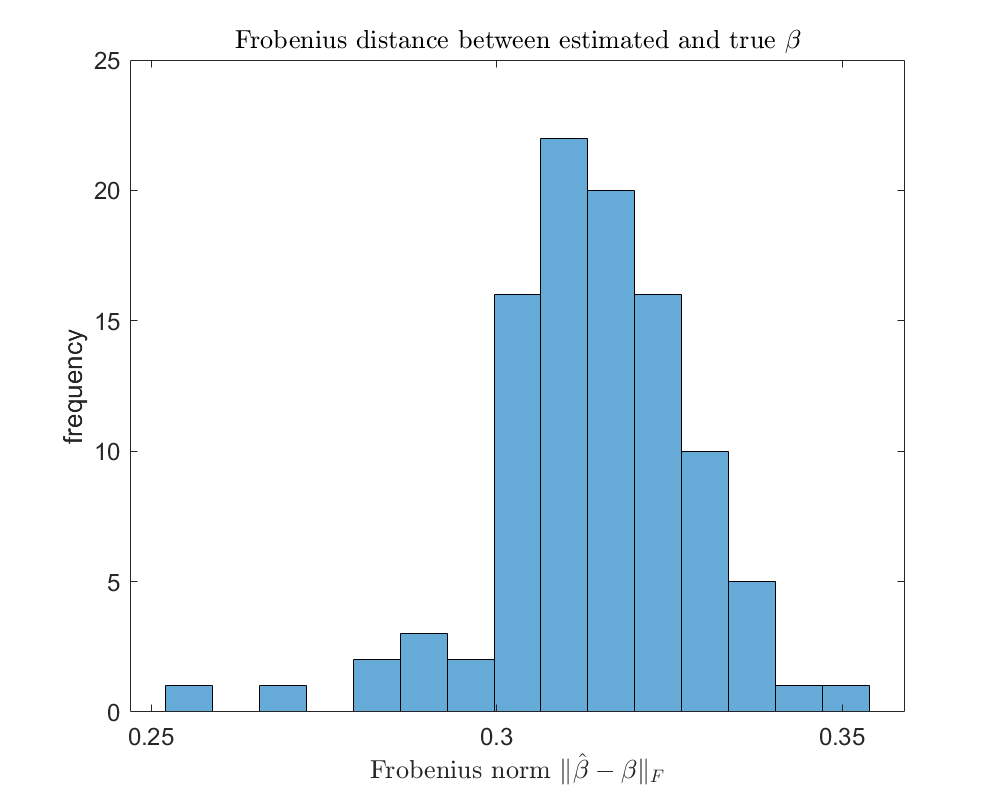}} 
\end{tabular}
\caption{\label{fig1} Using $T=500\,000$, for $100$ sample paths we estimate $\beta$ by $\hat\beta$. We present a histogram of the Frobenius norm $\|\hat\beta-\beta\|_F$ across these $100$ sample paths for each of the four unperturbed examples.}
\normalsize
\end{figure}

\begin{figure}[ht]
\centering
\begin{minipage}{0.48\textwidth}
\centering
\subfloat[\scriptsize First perturbed graphon]{\includegraphics[width=3in]{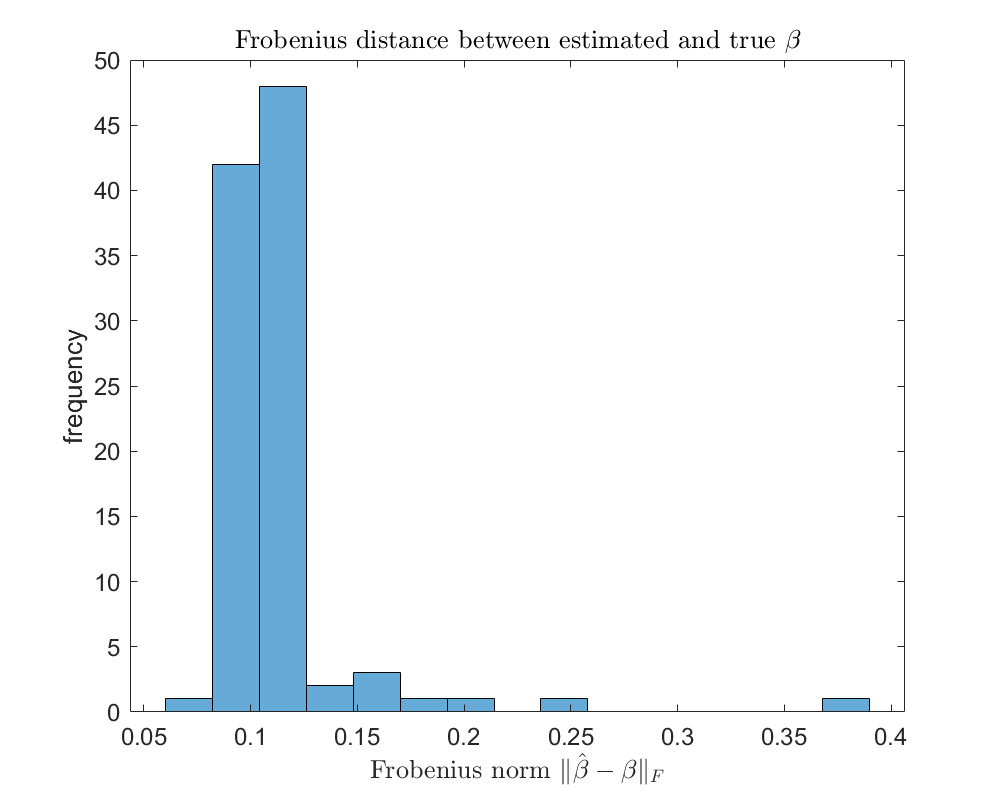}}
\end{minipage}
\hfill
\begin{minipage}{0.48\textwidth}
\centering
\subfloat[\scriptsize Second perturbed graphon]{\includegraphics[width=3in]{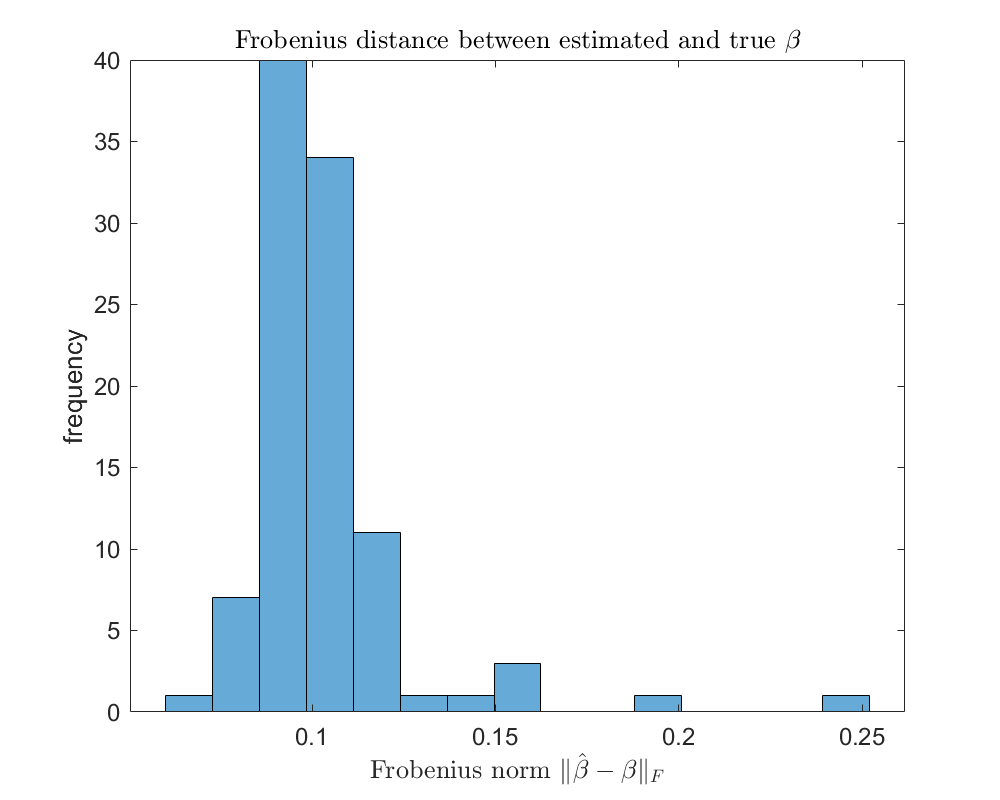}}
\end{minipage}
\vspace{0.8cm}
\begin{minipage}{0.48\textwidth}
\centering
\subfloat[\scriptsize Third perturbed graphon]{\includegraphics[width=3in]{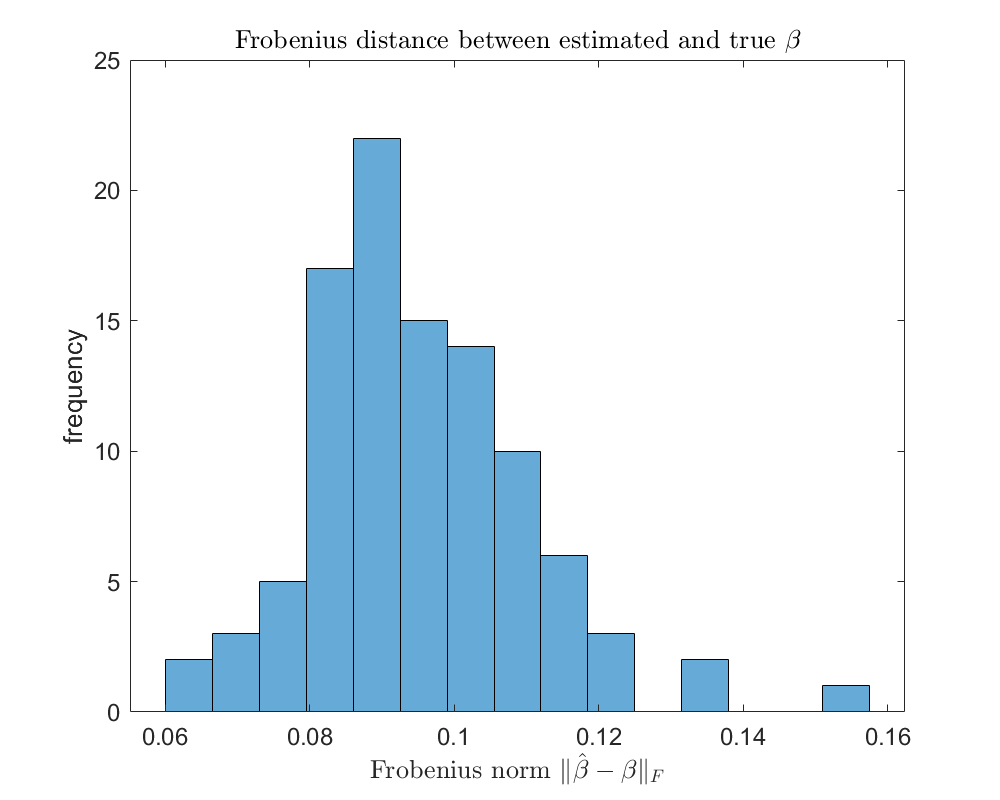}}
\end{minipage}
\caption{\label{fig2} Using $T=500\,000$, for $100$ sample paths we estimate $\beta$ by $\hat\beta$. We present a histogram of the Frobenius norm $\|\hat\beta-\beta\|_F$ across these $100$ sample paths for each of the three perturbed examples.}
\end{figure}

We present similar histograms for the three perturbed examples in Figure \ref{fig2}. For the first two examples, the Frobenius norms $\|\hat\beta-\beta\|_F$ appear to be slightly larger than in Figure \ref{fig1}(A), indicating that the perturbations introduce a modest loss in precision, but do not substantially deteriorate the overall estimation accuracy. For the third perturbed graphon, the Frobenius norms are of a similar magnitude as in the unperturbed case, suggesting that the impact of the perturbation is limited.
A natural explanation is that, in all three cases, the perturbations are approximately orthogonal to the chosen basis. Consequently, their contribution is only weakly represented in the coefficient matrix $\beta$, and do not cause a qualitative change in performance. This behavior is in line with the intuition underlying Assumption \ref{ass approx}, at least at a heuristic level. 

\textbf{Sparsity structure recovery of the graphon.}
In Table \ref{table1}, the mean false negative rate (FNR), false positive rate (FPR) and $F_1$-score over the $100$ simulations using $T=500\,000$ are reported. Between brackets, we also report the sample standard devations of those performance measures over the $100$ simulations.

\begin{table}[ht]
\centering
\begin{tabular}{l|l|ccc}
\hline
 & Model & FNR & FPR & $F_1$-score \\
\hline
\multirow{4}{*}{Unperturbed graphons}
 & $W_{\beta^{(1)}}$ & .0000 (.0000)  & .0250 (.0382)  & .8755 (.1477)  \\
 & $W_{\beta^{(2)}}$ & .0000 (.0000) & .0228 (.0351) & .9158 (.1019) \\
 & $W_{\beta^{(3)}}$ & .0425 (.0621)  & .0387 (.0983)  & .8977 (.1438)  \\
 & $W_{\beta^{(4)}}$ & .2779 (.0929)  & .0353 (.0981)  & .8169 (.0732)  \\
\hline
\multirow{3}{*}{Perturbed graphons}
 & $W_{9,\ldots,16}$ & .0000 (.0000)  & .0437 (.1141)  & .8670 (.2009)  \\
 & $W_{\mathrm{Trig}}$ & .0000 (.0000) & .0310 (.0535)  & .8511 (.1452)  \\
 & $W_{\mathrm{BM}}$ & .0000 (.0000) & .0363 (.0855)  & .8716 (.1903)  \\
\hline
\end{tabular}
\caption{Sample mean and standard deviation over the $100$ performed simulations for several performance measures for the unperturbed and perturbed examples described in this section, using $T=500\,000$.}
\label{table1}
\end{table}

We observe that false negatives are rare in the first three perturbed examples and in the first two unperturbed examples, indicating that the nonzero coefficients are typically identified with good accuracy. In the fourth (non-sparse) example, where the graphon contains $28$ nonzero coefficients out of $64$, some coefficients lie close to zero. Due to parameter shrinkage, these small coefficients are more difficult to detect, and a lasso-type procedure favouring sparsity may estimate small coefficients by zero, explaining a higher FNR. This should be interpreted as a finite-sample effect rather than a failure in terms of  support recovery.

False positives occur infrequently across all models. The $F_1$-score is high for the four unperturbed scenarios, indicating accurate variable selection. Although it decreases slightly for the models with perturbed graphons, it remains at a strong level. This suggests that our estimation procedure is robust, even when the true data-generating graphon only approximately belongs to the specified model class.

\section{Application to Allen Neuropixel data set}\label{data study section}
In this section, we apply our penalized least-squares estimation procedure for the graphon Hawkes process to real data. We analyze an electrophysiological experiment that originates from session \texttt{715093703} of the Neuropixels extracellular recording collection maintained by the Allen Institute for Brain Science
\cite{AllenNeuroData}. 
The dataset comprises time-stamped spike events and corresponding probe depths.  Let $\{(t_i, d_i)\}_{i=1}^{N}$ denote the set of spike events, where $t_i$ is the spike time (in seconds) and $d_i$ is the probe depth (in micrometers). 
The continuous spatial structure of the recording makes the graphon Hawkes framework particularly natural: unlike a multivariate Hawkes process, which requires assigning neurons to a finite collection of discrete dimensions, the graphon formulation allows neuronal interactions to vary continuously with spatial location. This avoids an arbitrary discretization of the neuronal population while providing a natural framework for capturing spatially structured network interactions.

The recording was obtained from a head-fixed laboratory mouse (\textit{Mus musculus}) in an awake state while the animal was presented with controlled visual stimuli on a monitor. Neuropixels probes were inserted into cortical and subcortical regions, enabling high-density measurements of extracellular voltage deflections associated with neuronal action potentials. These signals were subsequently spike-sorted to infer single-unit activity, yielding time-stamped spike events and depth annotations for each unit. The experimental paradigm is designed to study population-level responses to sensory input and to provide benchmark data for computational models of neural dynamics.


\begin{figure}[ht]\centering
\begin{tabular}{cc}
\subfloat[\scriptsize Normalized baseline intensity (Legendre polynomials)]{\includegraphics[width = 3in]{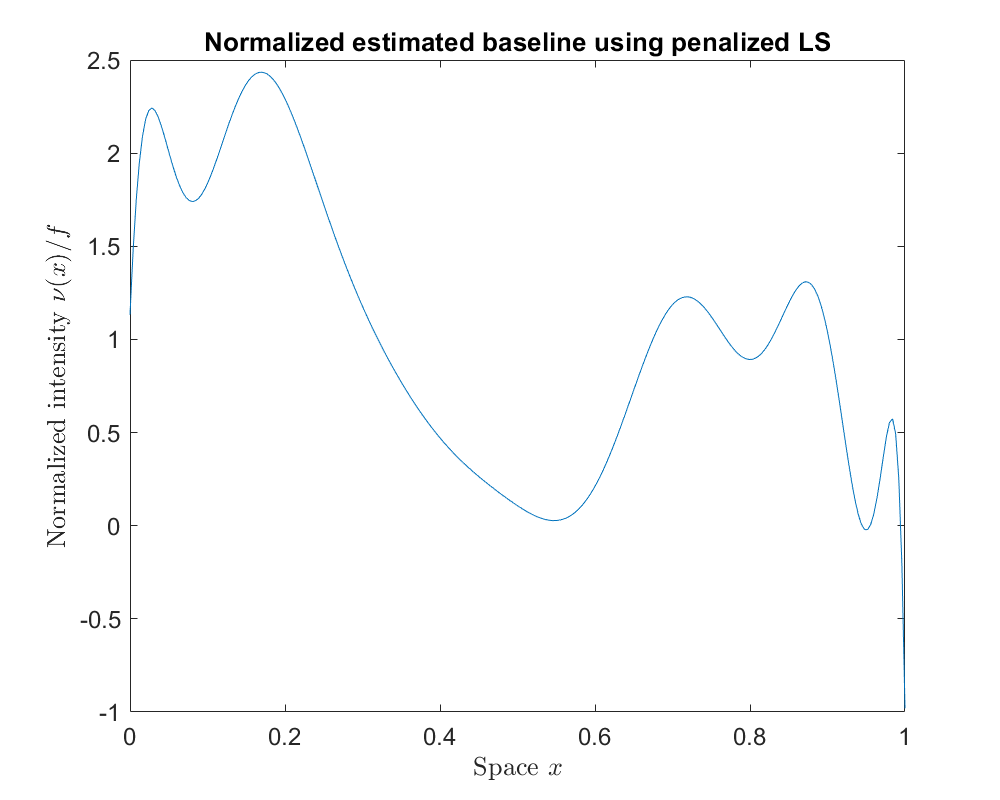}} &
\subfloat[\scriptsize Normalized graphon (Legendre polynomials)]{\includegraphics[width =3in]{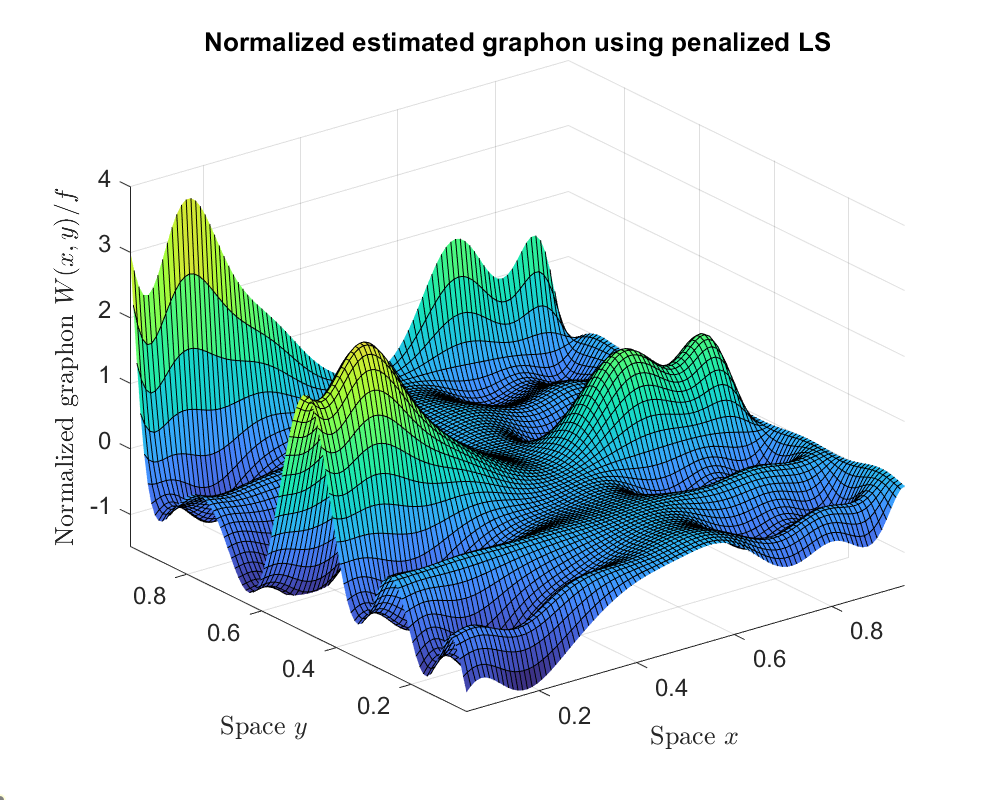}} \\
\subfloat[\scriptsize Normalized baseline intensity (Haar wavelets)]{\includegraphics[width = 3in]{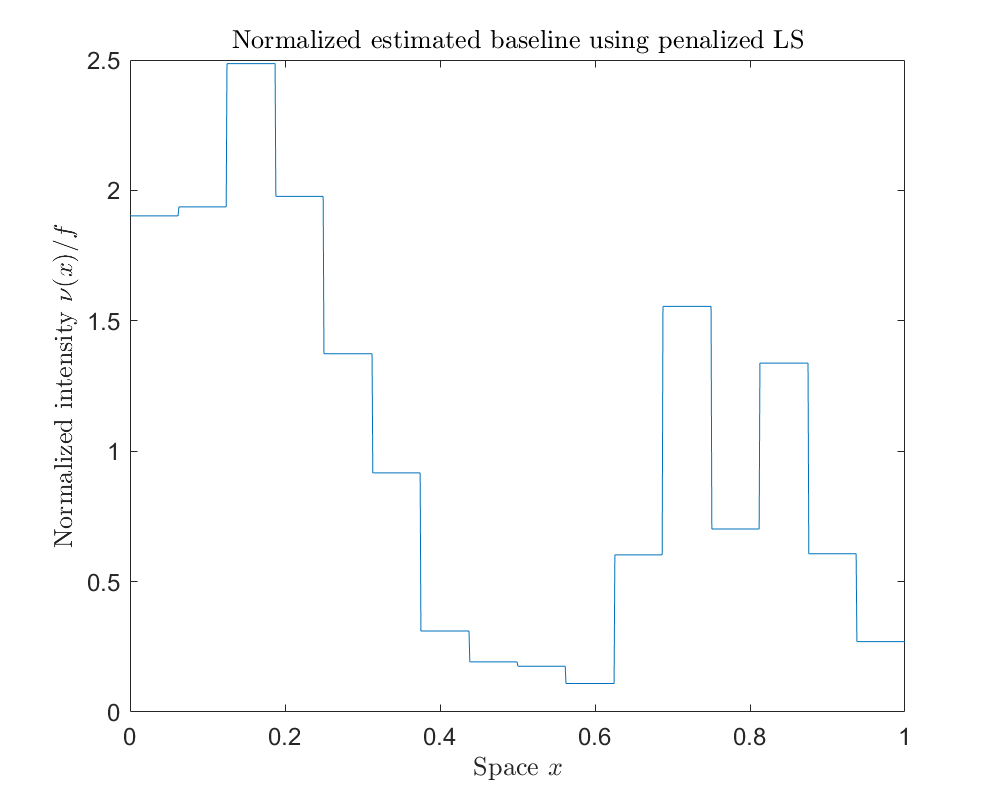}} &
\subfloat[\scriptsize Normalized graphon (Haar wavelets)]{\includegraphics[width = 3in]{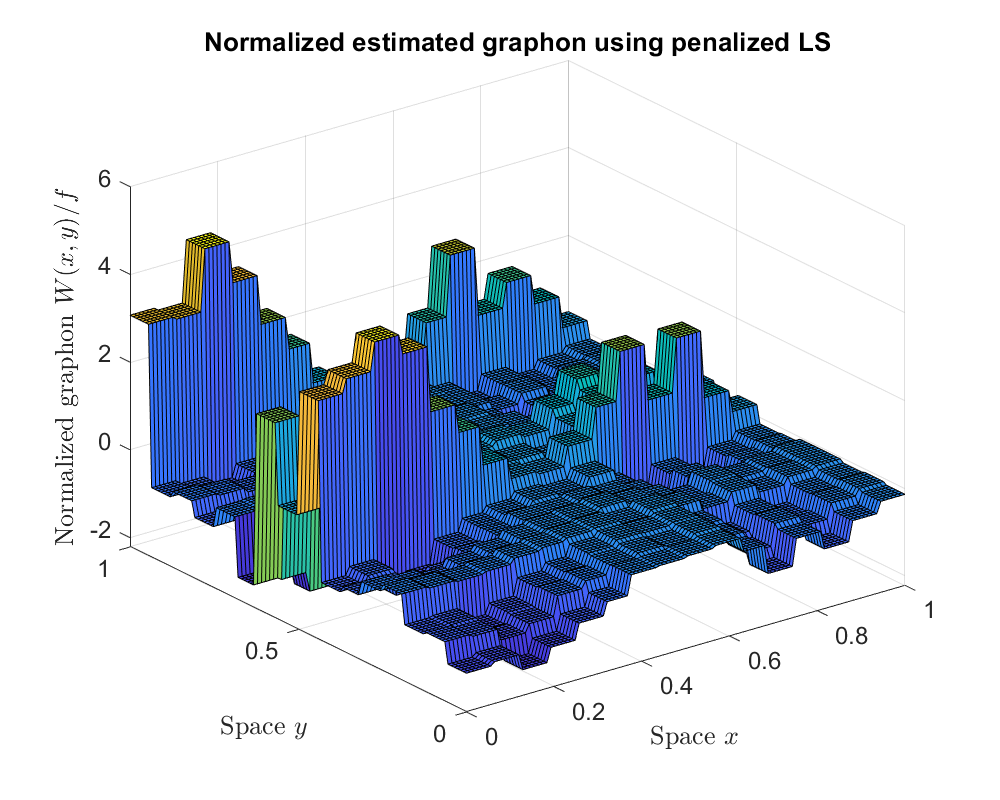}} 
\end{tabular}
\caption{\label{fig3} Comparison of penalized graphon Hawkes estimates for the Allen Neuropixels session \texttt{715093703}, based on spike events in the time interval $[5000,5250]$. Panels (A)–(B) display the estimated baseline intensity and graphon using a spatial basis of Legendre polynomials, whereas panels (C)–(D) use a spatial basis of Haar wavelets. All quantities are normalized by the empirical frequency $f$. The estimator based on Legendre polynomials produces smoother and more spatially regular estimates, while the results in panels (C)–(D) appear as discretized variants of those in panels (A)–(B). 
}
\normalsize
\end{figure}

\begin{figure}[ht]\centering
\begin{tabular}{cc}
\subfloat[\scriptsize Normalized baseline intensity (Legendre polynomials)]{\includegraphics[width = 3in]{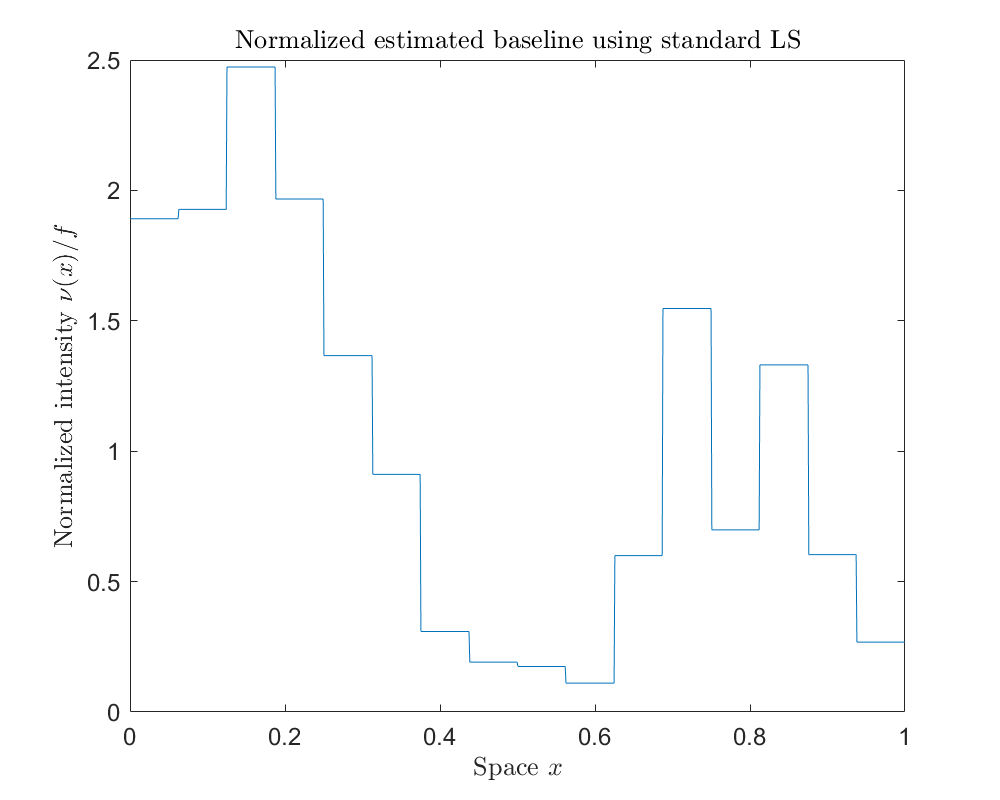}} &
\subfloat[\scriptsize Normalized graphon (Legendre polynomials)]{\includegraphics[width =3in]{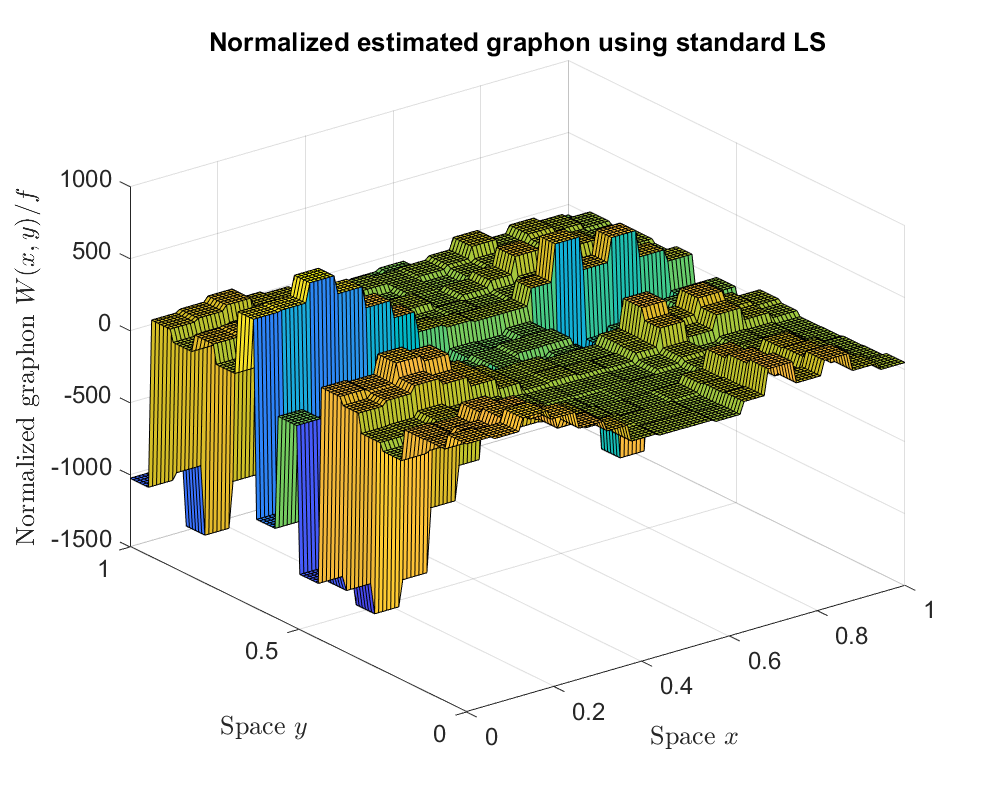}}
\end{tabular}
\caption{\label{fig4} Non-penalized least-squares estimates for the Allen Neuropixels session \texttt{715093703}, based on spike events in the interval $[5000,5250]$, are shown in Panels (A)–(B), where a spatial basis of Haar wavelets is used with $\eta=0$. All quantities are normalized by the empirical frequency $f$. In contrast to the penalized estimates in Figure \ref{fig3}(C)--(D), the non-penalized solution exhibits pronounced spatial irregularity, with large oscillations and substantial negative values of the graphon over parts of the domain. These findings indicate that, while the unregularized estimator can capture fine-scale variation, it is highly sensitive to sampling noise and produces estimates that are difficult to interpret in terms of underlying connectivity structure. 
}
\normalsize
\end{figure}

Fitting multivariate Hawkes processes to neuronal spike trains is by now standard practice in the analysis of neural population dynamics; see, for example, \cite{Bonnet2022, Bonnet2023, Cai2024, Chen2019, Donnet2020, Fallahi, Hansen, Wang2024, Zhang2016}. A spatiotemporal Hawkes process with a graphon-induced network structure was introduced in \cite{Baars2024} as a high-dimensional limit of multivariate Hawkes processes. In many neuroscience applications, the spatial domain is discretized so that each coordinate of a multivariate Hawkes process corresponds to a predefined spatial region or recording site. By contrast, the present dataset provides inherently continuous spatial information via probe depth. This motivates estimating the graphon Hawkes process directly to the non-discretized neuronal data, thereby avoiding an artificial partition of space. 

To control computational complexity and for illustrative purposes, we restrict the analysis to event times in the $250$-second interval $[5000,5250]$, comprising approximately $1.81$ million events. The spatial coordinate is observed on the discrete grid $\{20,40,\ldots,3280\}$, corresponding to probe-depth bins measured in micrometers. To obtain a continuous spatial coordinate, we apply a standard jittering procedure (cf.~\cite{Schoenberg2019}, Section 7) by adding independent $\mathrm{Uniform}(-10,10)$ perturbations to the recorded depths and linearly rescaling the resulting range $(10,3290)$ to the unit interval $\mathscr X=[0,1]$. This preserves the bin frequencies while avoiding artificial ties induced by the discretization and yields a continuous representation suitable for kernel-based estimation of the graphon connectivity function.

We fit the graphon Hawkes process to the data using a truncated spatial basis $(\varphi_k)_{k\in[M]}$ consisting of the first $M=16$ Legendre polynomials on $\mathscr X=[0,1]$, as given in (\ref{Haar wavelets}). 
The choice $M=16$ provides sufficient spatial resolution while keeping the resulting optimization problem computationally tractable; as a robustness check, we also considered $M=8$, for which the estimated patterns are qualitatively similar but somewhat less detailed. 
Let $f=1812778/250\approx7251.1\ \mathrm{Hz}$ denote the empirical event frequency in the observation window $[5000,5250]$. 
For the temporal component, we use $H=5$ exponential basis functions $t\mapsto\exp(-r_mft)$, with $r_1=1$, $r_2=2$, $r_3=5$, $r_4=1/2$, and $r_5=1/5$. 
This choice provides a range of decay rates, from relatively fast to relatively slow excitation, while keeping the temporal representation low-dimensional. 
For comparison, we also fit the model using a spatial basis consisting of Haar wavelets on $[0,1]$. The regularization parameter in the penalized specification is chosen to be $\eta=\log(T)/(4\sqrt{T})$, where $T=250$ is the length of the observation window. 
Estimates of the baseline intensity and graphon are normalized by the empirical event frequency $f$ to enable comparison of relative magnitudes across specifications. 


The baseline and graphon estimates are displayed in Figure \ref{fig3}. Remarkably, the Legendre- and Haar-based specifications yield consistent connectivity patterns, despite their substantially different spatial representations, indicating that the dominant features of the estimated graphon are not artifacts of the particular choice of basis. The Legendre basis represents these patterns as a smooth function of spatial location, whereas the Haar basis captures the same underlying structure through localized, piecewise-constant components. The qualitative agreement across the two specifications indicates that the estimated connectivity structure is robust to the choice of spatial basis. We note that the results were qualitatively similar when using $M=8$ Legendre polynomials instead of $M=16$, although the estimates were somewhat less detailed.

We interpret the estimated parameters of the graphon Hawkes process from a neurological perspective. 
The estimated baseline intensity $\nu(x)$ varies across $x\in[0,1]$, with high values for $x\in[0,1/4]$, near zero around $x=1/2$, and moderate values for $x\in[3/4,1]$. In the Hawkes framework, the baseline captures activity that is not explained by endogenous interactions, including spontaneous firing and stimulus-driven responses that are effectively independent of network activity. Accordingly, high $\nu(x)$ corresponds to neurons with substantial intrinsic or broadly responsive activity, whereas the near-zero baseline around $x=1/2$ indicates that little of the observed activity of these neurons is attributed to an exogenous baseline component by the fitted model.
This pattern suggests subpopulations with distinct firing characteristics, consistent with heterogeneity across cell types, cortical layers, or stimulus responsiveness.
Neurons around $x=1/2$ also exhibit weak incoming connectivity in the estimated graphon. Taken together, these estimates suggest that their activity is less strongly associated with recurrent network interactions. This interpretation is consistent with the broader perspective of low-dimensional and stimulus-related structure in neural population activity \cite{Archer2014}.

The estimated interaction kernel is approximately separable, $W(x,y)\approx W_X(x)W_Y(y)$, 
with $W_X(x)$ resembling $\nu(x)$. This factorization indicates that network interactions can be interpreted as the combination of target susceptibility and source influence. Populations with high $W_Y(y)$, near $y=1$ and $y=1/2$, act as stronger drivers of network activity. Such low-rank or separable structures are consistent with theoretical models in which population-level dynamics emerge from low-dimensional connectivity patterns \cite{Beiran2020, MastrogiuseppeOstojic2018}.



To assess the effect of penalization, we also implement the least-squares estimation procedure proposed in this paper using a spatial basis consisting of Haar wavelets, and using $\eta=0$, corresponding to the classical, non-penalized least-squares estimator. 
Figure \ref{fig4} displays the corresponding estimates of the baseline intensity and the graphon, each normalized through division by the frequency $f$. As can be seen from Figure \ref{fig4}, the non-penalized estimator produces erratic estimates with high spatial variability, reflected in large oscillations and substantial negative graphon values over parts of the domain. While this specification achieves a close empirical fit to the observed spike data, the resulting estimates are difficult to interpret from a neuroscientific perspective and suggest overfitting to sampling noise. The lack of regularization permits strong local variability, which undermines interpretability of the estimated connectivity pattern.

\section{Concluding remarks}\label{discussion section}
In this paper we studied statistical inference for linear graphon Hawkes processes observed on a continuous spatial domain. We proposed a penalized least-squares estimation procedure based on truncated orthogonal basis expansions for the baseline intensity, interaction graphon, and excitation kernel. The resulting estimator allows the number of parameters to grow with the observation horizon while promoting sparsity in the effective interaction structure. We established non-asymptotic error bounds for the estimated intensity, proved consistent recovery of the latent interaction network under suitable sparsity conditions, and illustrated the method through simulation experiments and an application to neuronal spike train data.

Several extensions and follow-up questions arise.
\begin{itemize}
\item
In this paper we considered graphon Hawkes processes starting from an empty history. The analysis can be extended in a straightforward way to the case where the process has a nonempty history on $(-b,0)$. Under Assumption~\ref{stability ass}(vi), the excitation kernel has bounded support, so that allowing a nonempty history on $(-b,0)$ effectively accommodates arbitrary past histories.
\item
The proof of Theorem~\ref{consistency thm} indicates that several variations of the proposed estimation procedure can be accommodated with only minor modifications. In particular, as noted in Remark~\ref{possible extensions remark}, the baseline coefficients $\alpha$ may be included in the regularization term without affecting the resulting probabilistic error bounds. Moreover, the $\ell^1$ penalty used in the present work can be replaced by other regularization schemes, such as ridge, elastic net, or standardized group lasso penalties \cite{Simon2013}. Since the key arguments rely primarily on the quadratic structure of the least-squares contrast and the bounds derived from the Gram matrix representation, extending the analysis to these alternative penalties is straightforward.
\item
The present work focuses on linear graphon Hawkes processes defined on a continuous spatial domain with a stationary baseline intensity. Our contribution lies in extending the framework to infinite-dimensional, spatiotemporal processes and removing the assumption that the conditional intensity is almost surely bounded. While frameworks such as \cite{Cai2024} can accommodate nonstationary and nonlinear intensities, we restricted attention to the linear and stationary setting to keep the scope manageable. Investigating estimation and recovery in more general nonlinear or nonstationary spatiotemporal Hawkes models is a natural direction for future work, but is beyond the scope of the present study.
\item
Another natural extension concerns statistical inference based on the estimated interaction structure. In particular, it would be interesting to develop formal hypothesis tests for the presence or absence of interactions, or for structural properties of the graphon $W$. Such procedures would rely on suitable limit theorems for the proposed estimator or for related score-type statistics. Developing these results would require a substantially more detailed asymptotic analysis and is therefore left for future work.
\item
Finally, while the present work focuses on theoretical guarantees and proof-of-concept applications, further work is needed to study the computational aspects of the proposed estimator in large-scale settings. In particular, developing scalable optimization algorithms and data-driven procedures for selecting the truncation levels and regularization parameters would be important for applying the methodology to very large datasets.

\end{itemize}

\section*{Acknowledgements}
The research of Justin Baars and Roger Laeven is funded in part by the Netherlands Organization for Scientific Research under an NWO VICI grant (2020--2027). The research of Michel Mandjes was funded in part by the NWO Gravitation project N{\sc etworks}, grant number 024.002.003. 
The authors report that there are no competing interests to declare.

\section*{Appendix A: Concentration inequalities and auxiliary technical results}\label{app A}

\subsection*{Concentration inequalities for first- and second-order functionals}
First, we establish establish auxiliary results to prove the concentration inequalities for functionals of the graphon Hawkes process, as given in Theorem \ref{concentration ineq}.

In the following coupling lemma, let $\bar\lambda$ be the solution to the Fredholm integral equation of the second kind $\bar\lambda=\nu +T_{\mathrm{hom}}(\bar\lambda)$, where   
$T_{\mathrm{hom}}$
is defined in \eqref{operator Thom marked general X}.
\begin{lemma}\label{couplingdeviationslemma}
Grant Assumption \ref{stability ass}. Let $N$ be a Hawkes process with intensity given in (\ref{conditional int dens}), starting from the empty history. 
For any positive time $z>0$, there exists a process $\tilde N\stackrel d= N$, such that $\tilde N$ is independent of the history of $N$ up to time $z$. 
Furthermore, for any $u\geq0$ and $n=\lfloor u/b\rfloor +1$ it holds that 
\begin{align}
\mathbb E\left|\tilde N(z+\mathrm du,\mathrm ds)-N(z+\mathrm du,\mathrm ds)\right|/(\mathrm du\,\mathrm ds)&\leq 2T_{\mathrm{hom}}^n(\bar\lambda)(s);\\
\mathbb E\,\Big|\tilde N(\mathrm dt,\mathrm ds_1)\tilde N(z+\mathrm du,\mathrm ds_2)&-N(\mathrm dt,\mathrm ds_1)N(z+\mathrm du,\mathrm ds_2)\Big|/(\mathrm du\,\mathrm dt\,\mathrm ds_1\,\mathrm ds_2)\nonumber\\
&\leq C \sum_{i=1}^{n+1}T_{\mathrm{hom}}^{i-1}(\mathrm{id})(s_1)\,T_{\mathrm{hom}}^{n-i+1}\,(\mathrm{\bar\lambda})(s_2),
\end{align}
where $C:=\max\{2(\overline\nu+C_W\|h\|_{L^\infty(\mathbb R_+)}),C_{\max}\}$, with $C_{\max}$ defined in (\ref{crude estimate}) and finite.
\end{lemma}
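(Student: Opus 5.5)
We will use the Poisson cluster (branching) representation of the linear graphon Hawkes process, built by thinning a common Poisson random measure. Let $\Pi$ be a Poisson random measure on $\mathbb R_+\times\mathscr X\times\mathbb R_+$ with Lebesgue intensity. Realize $N$ by thinning: a point $(t,x,v)$ of $\Pi$ is accepted when $v\leq\lambda(t,x)$. Equivalently, $N$ is a superposition of immigrants, which form a Poisson process with intensity $\nu$, and of their offspring generations. An event at $(s,y)$ produces children at $(t,x)$ with density $W(x,y)h(t-s)$.

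To construct $\tilde N$, we take an independent copy $\Pi'$ of the Poisson measure restricted to $(0,z]$, together with the original $\Pi$ on $(z,\infty)$. We let $\tilde N$ be the thinning of this spliced measure with the same rule. Then $\tilde N\stackrel d=N$, and $\tilde N$ restricted to $(0,z]$ is independent of $\mathcal F^N_z$. The statement only requires independence of the history up to $z$; the coupling with the future comes from sharing $\Pi$ after $z$. The two processes agree on $(z,\infty)$ except through descendants of points in $(0,z]$. Since $h$ is supported in $[0,b]$, any point at $z+u$ that differs between $N$ and $\tilde N$ must descend, through a chain of at least $n=\lfloor u/b\rfloor+1$ generations, from a point in $(0,z]$ of either $N$ or $\tilde N$.

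\textbf{First bound.} We dominate $|\tilde N-N|$ on $z+\mathrm du$ by the sum of two quantities: the descendants of $N|_{(0,z]}$ and the descendants of $\tilde N|_{(0,z]}$ that lie at time $z+u$. Take expectations generation by generation using the cluster structure, with each generation acting through the kernel $W(x,y)h(t-s)$. The starting generation has intensity at most $\bar\lambda^{(1)}\leq\bar\lambda$; the stationary solution of $\bar\lambda=\nu+T_{\mathrm{hom}}\bar\lambda$ dominates, by monotonicity of the Neumann series. Integrating out the times of the chain then gives an expected spatial density at $s$ of at most $T_{\mathrm{hom}}^n(\bar\lambda)(s)$, because time-integration of $h^{*n}$ yields $\|h\|_{L^1}^n$ and chains of fewer than $n$ steps cannot reach beyond $z+u$. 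There are two copies, which gives the factor $2$.

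\textbf{Second bound.} We expand
\[
\tilde N_1\tilde N_2-N_1N_2=\tilde N_1(\tilde N_2-N_2)+(\tilde N_1-N_1)N_2
\]
and condition on the branching structure. Two cases must be controlled:
\begin{itemize}
\item[(a)] The point at $(t,s_1)$ is itself an ancestor in the chain leading to the discrepancy at $(z+u,s_2)$.
\item[(b)] The two points are in different lineages.
\end{itemize}
In case (b), the factor for $(t,s_1)$ is bounded by $\sup\lambda$-type constants: the first-order density is at most $\overline\nu+C_W\|h\|_\infty\,\mathbb E[\cdot]$. This yields the constant $2(\overline\nu+C_W\|h\|_{L^\infty})$ times $T_{\mathrm{hom}}^n(\bar\lambda)(s_2)$, which is the $i=1$ term.

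In case (a), we split the chain of length at least $n$ at the position $i$ where $(t,s_1)$ occurs. The portion of the chain before $(t,s_1)$, traced from $(t,s_1)$ back to its root, contributes a factor $T_{\mathrm{hom}}^{i-1}(\mathrm{id})(s_1)$; here $\mathrm{id}$ means the constant function $1$, arising from bounded kernel densities and time integrals. The portion after $(t,s_1)$, which feeds the discrepancy at $(z+u,s_2)$, contributes $T_{\mathrm{hom}}^{n-i+1}(\bar\lambda)(s_2)$. Summing over $i\in[n+1]$ gives the stated sum. The second-order (diagonal-free) densities at a single point are bounded using the crude estimate $C_{\max}$ from (\ref{crude estimate}), which uses Assumption \ref{stability ass}(ii) so that $T_{\mathrm{hom}}^j(\mathrm{id})\leq\varrho^j$ stays bounded.

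\textbf{Main obstacle.} The main difficulty is the combinatorics in case (a): correctly accounting for how a point at $(t,s_1)$ can sit on the ancestral chain of the discrepancy, and bounding the joint factorial-moment densities via the cluster representation without overcounting. We would handle this by writing the second factorial moment measure of a Poisson cluster process explicitly, as the sum of a pairs-in-the-same-cluster term and a pairs-in-different-clusters term, and bounding each term by Neumann-series iterates of $T_{\mathrm{hom}}$. The time integrals are absorbed through $\|h\|_{L^1}$ and $\|h\|_{L^\infty}$.
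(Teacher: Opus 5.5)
There are genuine gaps in both bounds. A smaller point first. Your coupling is the paper's: fresh Poisson input on $(0,z]$, shared input afterwards. But you state its independence property too weakly. The lemma claims, and the later $\tau$-dependence argument needs, that the \emph{whole} process $\tilde N$ is independent of $\mathcal F^N_z$, not just $\tilde N|_{(0,z]}$. This does hold, because $\tilde N$ is a functional of $\Pi'|_{(0,z]}$ and $\Pi|_{(z,\infty)}$, both independent of $\Pi|_{(0,z]}$.

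\textbf{First-order bound.} The gap is in the count. The thinning coupling carries no common genealogy. What it gives is
$\mathbb E|\tilde N-N|(z+\mathrm du,\mathrm ds)=\mathbb E|\tilde\lambda(z+u,s)-\lambda(z+u,s)|\,\mathrm du\,\mathrm ds$, together with
$|\tilde\lambda-\lambda|(t,x)\le\int W(x,y)h(t-r)\,|\tilde N-N|(\mathrm dr,\mathrm dy)$.
If you unfold this back to roots in $(0,z]$ with density at most $2\bar\lambda$, you must sum over \emph{all} chain lengths $k\ge n$. You only integrated $h^{*n}$, but every length $k\ge n$ reaches $z+u$. The result is $2\sum_{k\ge n}T^k_{\mathrm{hom}}(\bar\lambda)(s)$, which is weaker than $2T^n_{\mathrm{hom}}(\bar\lambda)(s)$. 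The missing idea is truncation, as in the paper's Step 2:
\begin{itemize}
\item Iterate the intensity inequality exactly $n$ times. The intermediate times never drop below $z$, since $u\ge(n-1)b$ and $h$ is supported in $[0,b]$.
\item At the landing time, bound $\mathbb E|\tilde N-N|\le\mathbb E\tilde N+\mathbb E N\le 2\bar\lambda$. This already absorbs all remaining generations.
\end{itemize}
The genealogical analogue is to charge each point to its $n$-th ancestor rather than to its root. Your weaker bound would still suffice for Lemma~\ref{expdecaylemma}, but it is not the stated inequality.

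\textbf{Second-order bound.} This part is a plan rather than a proof, and parts of it cannot work as described.

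First, cases (a)/(b) only treat $\tilde N_1(\tilde N_2-N_2)$; the term $(\tilde N_1-N_1)N_2$ is never bounded. In the configuration of case (a) it cannot be bounded as required. Take $t$ well before $z+u$, say $t\in(z,z+b)$ and $u\to\infty$. Then
$\mathbb E|\tilde N_1\tilde N_2-N_1N_2|\ge\mathbb E\bigl[|\tilde N_1-N_1|\,\mathbf 1\{\tilde N_2=N_2=1\}\bigr]$.
By decorrelation this is asymptotically of order $\mathbb E|\tilde\lambda(t,s_1)-\lambda(t,s_1)|\,\bar\lambda^{(1)}(z+u,s_2)$, which does not decay in $u$. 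The right-hand side, by contrast, is $\mathcal O((n+1)\varrho_{T_{\mathrm{hom}}}^n)$. So the bound is only attainable with $(t,s_1)$ the later point. That is what the paper's proof tacitly uses: it thins at $(t,s_1)$ given a past containing $(z+u,s_2)$.

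Second, the factorization in (a) is not what the cluster computation gives. A chain through $(t,s_1)$ into $(z+u,s_2)$ yields a factor at $s_1$ times an iterated kernel $W^{\circ(n-i+1)}(s_2,s_1)$ linking the two sites. That is not $T^{i-1}_{\mathrm{hom}}(\mathrm{id})(s_1)\,T^{n-i+1}_{\mathrm{hom}}(\bar\lambda)(s_2)$.

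Third, in (b), distinct lineages of one cluster are correlated through common ancestors. Moreover, $\overline\nu+C_W\|h\|_{L^\infty}$ does not bound a first-order density; that density is bounded by $\bar\lambda$, which can be much larger.

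The paper avoids factorial-moment combinatorics entirely:
\begin{itemize}
\item For $t>z+u$, conditionally on the past, the expected absolute difference equals $|\tilde\lambda(t,s_1)\tilde N_2-\lambda(t,s_1)N_2|$.
\item Expand $\lambda(t,s_1)=\nu(s_1)+\int W(s_1,l)h(t-t')N(\mathrm dt',\mathrm dl)$.
\item The baseline and the atom $(t',l)=(z+u,s_2)$ give $(\overline\nu+C_W\|h\|_{L^\infty})$ times the first-order deviation at $(z+u,s_2)$. This is the $i=1$ term.
\item The off-diagonal part gives $\int W(s_1,l)h(t-t')$ times a second-order deviation at $((t',l),(z+u,s_2))$. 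This is iterated, with the crude bound $C_{\max}$ as base case.
\end{itemize}
Applying $T_{\mathrm{hom}}$ repeatedly in the variable of the later point, starting from a constant, is what produces $T^{i-1}_{\mathrm{hom}}(\mathrm{id})(s_1)$.
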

\begin{proof} We first prove the existence of the coupling $\tilde N$ of $N$. Then, we study the first-order deviations between $N$ and $\tilde N$. We finish by studying the second-order deviations.

\emph{Step 1}. We prove the existence of $\tilde N$.
Let $\breve N$ be a Poisson point process (PPP) on $\mathbb R^2\times\mathscr X$ with measure $\mathrm{Leb}\times \mathrm{Leb}\times \mu$, independent of the PPP $\bar N$ that is used for the canonical construction of $N$, see \cite{Baars2024}, Section 3, and the proof of \cite{Baars2024}, Theorem 1. Note that by $\mu(\mathscr X)<\infty$, we need only a single process $\bar N$ to construct $N$. Similarly to the Picard scheme given from \cite{Baars2024}, we construct $\tilde N^{(n)}$ with corresponding conditional intensity density $\tilde \lambda^{(n)}$. First let $\tilde N^{(0)}$ be the empty process consisting of no events. Then, for $n\in\mathbb N_0$, construct $\tilde N^{(n)}$ as 
\begin{align}
\tilde\lambda^{(n+1)}(t,x)&:=\nu(x)+\int_{(0,t)\times\mathscr X}W(x,y)h(t-s)\, \tilde N^{(n)}(\mathrm ds,\mathrm dy);\nonumber\\
\label{picard coupled}\tilde N^{(n+1)}(\mathrm dt,\mathrm dx)&:=\begin{cases}
  \breve N\left(\left[0,\tilde\lambda^{(n+1)}(t,x)\right]\times \mathrm dt\times \mathrm dx\right) & \text{if } t \leq z; \\
  \bar N\left(\left[0,\tilde\lambda^{(n+1)}(t,x)\right]\times \mathrm dt\times \mathrm dx\right) & \text{if } t > z.
\end{cases}
\end{align}
Note that one could write $$\tilde N^{(n+1)}(\mathrm dt,\mathrm dx)=\tilde N'\left(\left[0,\tilde\lambda^{(n+1)}(t,x)\right]\times \mathrm dt\times \mathrm dx\right),$$ where $$\tilde N'(\mathrm dt,\mathrm dx):=\mathbf1\{t\leq z\}\breve N(\mathrm dt,\mathrm dx)+\mathbf1\{t>z\}\bar N(\mathrm dt,\mathrm dx)$$ is a PPP on $\mathbb R^2\times\mathscr X$ with measure $\mathrm{Leb}\times \mathrm{Leb}\times \mu$. Therefore, the representation given in (\ref{picard coupled}) is equivalent to the standard Picard construction, whence $\tilde N\stackrel d= N$. 

In order to show that $\tilde N$ is independent of the history of $N$ up to time $z$, note that $N$ up to time $z$ is constructed using only $\bar N$ up to time $z$. Hence it suffices to show that $\tilde N$ is independent of $\mathcal H_z$, where $\mathcal H_t:=\sigma(\bar N(s):s\leq t)$. To this end, note that $\tilde N'$ is independent of $\mathcal H_z$, hence $\tilde N^{(1)}$ is independent of $\mathcal H_z$. Now suppose that $\tilde N^{(n)}$ is independent of $\mathcal H_z$. As $\tilde\lambda^{(n+1)}(t,x)$ is a function of $\tilde N^{(n)}$, it follows that $\tilde\lambda^{(n+1)}(t,x)$ is independent of $\mathcal H_z$. As $\tilde N^{(n+1)}(\mathrm dt,\mathrm dx)$ is a function of $\tilde\lambda^{(n+1)}(t,x)$ and of $\tilde N'$, which are both independent of $\mathcal H_z$, it follows that $\tilde N^{(n)}$ is independent of $\mathcal H_z$. By induction, $\tilde N^{(n)}$ is independent of $\mathcal H_z$ for each $n\in\mathbb N$. Therefore, the same holds for their limit $\tilde N$.

\emph{Step 2}. We study the first-order deviations between $N$ and $\tilde N$. We aim to show that for any $u>(n-1)b$, $n\in\mathbb N$, it holds that 
\begin{equation}\label{first order deviation IH}
\mathbb E\left|\tilde N(z+\mathrm du,\mathrm ds)-N(z+\mathrm du,\mathrm ds)\right|/(\mathrm du\,\mathrm ds)\leq 2T_{\mathrm{hom}}^n(\bar\lambda)(s),
\end{equation}
where $T_{\mathrm{hom}}$ is defined in (\ref{operator Thom marked general X}), and where $\mathrm{id}\in L^1(\mathscr X,\mu)$ is the identity operator. 

We prove the claim by induction. First consider $n=1$. We have 
\begin{align}
&\mathbb E\left|\tilde N(z+\mathrm du,\mathrm ds)-N(z+\mathrm du,\mathrm ds)\right|/(\mathrm du\,\mathrm ds)\nonumber\\
&=\mathbb E\left|\tilde N'\left(\left[0,\tilde\lambda(z+u,s)\right]\times \mathrm dt\times \mathrm ds\right)-\bar N\left(\left[0,\lambda(z+u,s)\right]\times \mathrm dt\times \mathrm ds\right)\right|/(\mathrm du\,\mathrm ds)\nonumber\\
&=\mathbb E\left|\tilde\lambda(z+u,s)-\lambda(z+u,s)\right|\nonumber\\
&\leq \int_{[0,b]\times\mathscr X}W(s,y)h(t)\, \mathbb E\left|\mathrm d\tilde N(z+u-t,y)-N(z+u-t,y)\right|\nonumber\\
&\leq2\int_{[0,b]\times\mathscr X}W(s,y)h(t)\bar\lambda(y)\, \mathrm dy\, \mathrm dt\leq2\|h\|_{L^1(\mathbb R_+)}\int_{\mathscr X}W(s,y)\bar\lambda(y) \, \mathrm dy\nonumber\\
&=2T_{\mathrm{hom}}(\mathrm{\bar\lambda})(s).\label{ineq 1st order dev}
\end{align}
In the second inequality, we use the triangle inequality and $$\mathbb E\left|\mathrm dN(z+u-t,y)\right|/(\mathrm dt\,\mathrm dy)\leq\mathbb E\left|\mathrm dN_{\mathrm{stat}}(z+u-t,y)\right|/(\mathrm dt\,\mathrm dy)=\bar\lambda(y),$$ by coupling our process starting in the empty history to the process $N_{\mathrm{stat}}$ starting in stationarity. 

Now suppose that (\ref{first order deviation IH}) holds true for some $n-1$, $n\geq2$, so for $u>(n-2)b$. The first inequality from (\ref{ineq 1st order dev}) now reads 
\begin{align}
&\mathbb E\left|\tilde N(z+\mathrm du,\mathrm ds)-N(z+\mathrm du,\mathrm ds)\right|/(\mathrm du\,\mathrm ds)\nonumber\\
&\leq \int_{[0,b]\times\mathscr X}W(s,y)h(t)\, \mathbb E\left|\mathrm d\tilde N(z+u-t,y)-N(z+u-t,y)\right|\nonumber.
\end{align}
Then, for $u>(n-1)b$, we may apply the induction hypothesis for $n-1$, since $t\in[0,b]$, so $u-t>(n-2)b$. This gives 
\begin{align}
\mathbb E\left|\tilde N(z+\mathrm du,\mathrm ds)-N(z+\mathrm du,\mathrm ds)\right|/(\mathrm du\,\mathrm ds)&\leq\nonumber2\int_{[0,b]\times\mathscr X}W(s,y)h(t)T_{\mathrm{hom}}^{n-1}(\mathrm{\bar\lambda})(y)\, \mathrm dy\, \mathrm dt\\
&=2T_{\mathrm{hom}}(T_{\mathrm{hom}}^{n-1}(\mathrm{\bar\lambda}))(s)=2T_{\mathrm{hom}}^n(\bar\lambda)(s),
\end{align}
providing the induction step. This proves the bound for the first-order devations between $\tilde N$ and $N$.

\emph{Step 3}. We study the second-order deviations between $N$ and $\tilde N$. 

\emph{Step 3A}. We start by giving a crude bound on the second-order deviations, which we will use in Step 3B. For $z$ as given in the theorem, and $\mathbb P_{\mathrm{stat}}$ the law of the process $N_{\mathrm{stat}}$ starting in stationarity, we have 
\begin{align}
&\mathbb E\left|\tilde N(\mathrm dt,\mathrm ds_1)\tilde N(z+\mathrm du,\mathrm ds_2)-N(\mathrm dt,\mathrm ds_1)N(z+\mathrm du,\mathrm ds_2)\right|/(\mathrm du\,\mathrm dt\,\mathrm ds_1\,\mathrm ds_2)\nonumber\\
&\leq2\mathbb E\left|N(\mathrm dt,\mathrm ds_1)N(z+\mathrm du,\mathrm ds_2)\right|/(\mathrm du\,\mathrm dt\,\mathrm ds_1\,\mathrm ds_2)\nonumber\\
&=2\mathbb E\left|\bar N([0,\lambda(t,s_1)]\times\mathrm dt\times\mathrm ds_1)\bar N([0,\lambda(z+u,s_2)]\times\mathrm d(z+u)\times\mathrm ds_2)\right|/(\mathrm du\,\mathrm dt\,\mathrm ds_1\,\mathrm ds_2)\nonumber\\
&=2\mathbb E[\lambda(t,s_1)\lambda(u+z,s_2)]\nonumber\\
&\leq2\max_{x\in\{s_1,s_2\}}\mathbb E_{\mathrm{stat}}[\lambda(t,x)^2]
\nonumber\\
&\leq2\max_{x\in\{s_1,s_2\}}\bigg\{\nu(x)^2+2\nu(x)\int_{(0,t)\times\mathscr X}W(x,y)h(t-s)\,\mathbb E_{\mathrm{stat}}[N(\mathrm ds,\mathrm dy)]\nonumber\\
&+\int_{(0,t)\times\mathscr X}\int_{(0,t)\times\mathscr X}W(x,y_1)W(x,y_2)h(t-s_1)h(t-s_2)\,\mathbb E_{\mathrm{stat}}[N(\mathrm ds_1,\mathrm dy_1)]\,\mathbb E_{\mathrm{stat}}[N(\mathrm ds_2,\mathrm dy_2)]\nonumber\\
&+\int_{(0,t)\times\mathscr X}W(x,y)^2h(t-s)^2\,\mathbb E_{\mathrm{stat}}[N(\mathrm ds,\mathrm dy)]
\bigg\}\nonumber\\
&\leq 2\max_{x\in\{s_1,s_2\}}\left\{(\nu(x)+T_{\mathrm{hom}}(\mathrm{\bar\lambda})(x))^2+\frac{\|h\|_{L^2(\mathbb R_+)}^2}{\|h\|_{L^1(\mathbb R_+)}}C_WT_{\mathrm{hom}}(\mathrm{\bar\lambda})(x)\right\}=:C_{\max}.\label{crude estimate}
\end{align}
The first inequality follows by the triangle inequality and since $\tilde N\stackrel{d}=N$; the second inequality follows by Cauchy-Schwarz and by coupling our process starting in the empty history to the process $N_{\mathrm{stat}}$ starting in stationarity; the third equality follows by separating the on- and off-diagonal contributions. Recalling that $\bar\lambda$ satisfies $\bar\lambda=\nu +T_{\mathrm{hom}}(\bar\lambda)$, and since $\rho(T_{\mathrm{hom}})<1$, the operator $(\mathrm{id}-T_{\mathrm{hom}})^{-1}\in B(L^1(\mathscr X))$ is bounded. Next, because $\|\nu\|_{L^1(\mathscr X)}<\infty$, it holds that $\|T_{\mathrm{hom}}(\mathrm{\bar\lambda})\|_{L^1(\mathscr X)}<\infty$. 
From $\sup_{x\in\mathscr X}|T_{\mathrm{hom}}(\mathrm{id})(x)|\leq\varrho_{T_{\mathrm{hom}}}$, it follows that $\|T_{\mathrm{hom}}\|_{L^\infty(\mathscr X)\to L^\infty(\mathscr X)}<1$, hence $\|\bar\lambda\|_{L^\infty(\mathscr X)}<\infty$. From this, it follows that the right-hand side of (\ref{crude estimate}) is bounded; i.e., $C_{\max}<\infty$.

\emph{Step 3B}. We look for a more refined bound on the second order deviations. We aim to show that for any $u>(n-1)b$, $n\in\mathbb N$, it holds that 
\begin{align}\nonumber
&\mathbb E\left|\tilde N(\mathrm dt,\mathrm ds_1)\tilde N(z+\mathrm du,\mathrm ds_2)-N(\mathrm dt,\mathrm ds_1)N(z+\mathrm du,\mathrm ds_2)\right|/(\mathrm du\,\mathrm dt\,\mathrm ds_1\,\mathrm ds_2)\\
&\leq C \sum_{i=1}^{n+1}T_{\mathrm{hom}}^{i-1}(\mathrm{id})(s_1)T_{\mathrm{hom}}^{n-i+1}(\mathrm{\bar\lambda})(s_2).\label{IH 2nd order dev}
\end{align}
We again use induction. First consider $n=1$. Let $u>0$ and $t\geq z$. We have 
\begin{align}
&\mathbb E\left|\tilde N(\mathrm dt,\mathrm ds_1)\tilde N(z+\mathrm du,\mathrm ds_2)-N(\mathrm dt,\mathrm ds_1)N(z+\mathrm du,\mathrm ds_2)\right|/(\mathrm du\,\mathrm dt\,\mathrm ds_1\,\mathrm ds_2)\nonumber\\
&=\mathbb E\left|\tilde N'([0,\lambda(t,s_1)]\times\mathrm dt\times\mathrm ds_1)\tilde N(z+\mathrm du,\mathrm ds_2)-\bar N([0,\tilde\lambda(t,s_1)]\times\mathrm dt\times\mathrm ds_1)N(z+\mathrm du,\mathrm ds_2)\right|/(\mathrm du\,\mathrm dt\,\mathrm ds_1\,\mathrm ds_2)\nonumber\\
&=\mathbb E\left|\tilde N(z+\mathrm du,\mathrm ds_2)\tilde \lambda(t,s_1)-N(z+\mathrm du,\mathrm ds_2)\lambda(t,s_1)\right|/(\mathrm du\,\mathrm ds_2)\nonumber\\
&\leq\overline\nu\ \mathbb E\left|\tilde N(z+\mathrm du,\mathrm ds_2)-N(z+\mathrm du,\mathrm ds_2)\right|/(\mathrm du\mathrm ds_2)\nonumber\\
&+\mathbb E\left|\int_{(0,t)\times \mathscr X}W(s_1,l)h(t-t')\left[\tilde N(\mathrm dt',\mathrm dl)\tilde N(z+\mathrm du,\mathrm ds_2)-N(\mathrm dt',\mathrm dl) N(z+\mathrm du,\mathrm ds_2)\right]\right|/(\mathrm du\,\mathrm ds_2)\nonumber\\
&\leq\overline\nu\ \mathbb E\left|\tilde N(z+\mathrm du,\mathrm ds_2)-N(z+\mathrm du,\mathrm ds_2)\right|/(\mathrm du\,\mathrm ds_2)\nonumber\\
&+\left|W(s_1,s_2)h(t-u-z)\right|\mathbb E\left|\tilde N(z+\mathrm du,\mathrm ds_2)- N(z+\mathrm du,\mathrm ds_2)\right|/(\mathrm du\,\mathrm ds_2)\nonumber\\
&+\int_{(0,t)\times \mathscr X}\left|W(s_1,l)h(t-t')\right|(1-\mathbf1\{l=s_2,t=u+z\})\nonumber\\
&\quad\times\mathbb E\left|\tilde N(\mathrm dt',\mathrm dl)\tilde N(z+\mathrm du,\mathrm ds_2)-N(\mathrm dt',\mathrm dl) N(z+\mathrm du,\mathrm ds_2)\right|/(\mathrm du\,\mathrm ds_2)\nonumber\\
&\leq2(\overline\nu+C_W\|h\|_{L^\infty(\mathbb R_+)})T_{\mathrm{hom}}(\mathrm{\bar\lambda})(s_2)+C_{\max}T_{\mathrm{hom}}(\mathrm{id})(s_1)\nonumber\\
&\leq C\left(T_{\mathrm{hom}}(\mathrm{id})(s_1)+T_{\mathrm{hom}}(\mathrm{\bar\lambda})(s_2)\right),\label{2nd order induction start}
\end{align}
where $C:=\max\{2(\overline\nu+C_W\|h\|_{L^\infty(\mathbb R_+)}),C_{\max}\}$.
The second equality uses that $\mathrm d\bar N(t)=\mathrm d\tilde N'(t)$ for $t\geq z$; note that we decompose into on- and off-diagonal contributions in the second inequality. The second to last inequality uses the crude bound from Step 3A.

Now suppose that (\ref{IH 2nd order dev}) holds true for some $n-1$, $n\geq2$, so for $u>(n-2)b$. Then for $n$, so $u>(n-1)b$, we have, using the inequality given in  (\ref{2nd order induction start})
\begin{align}
&\mathbb E\left|\tilde N(\mathrm dt,\mathrm ds_1)\tilde N(z+\mathrm du,\mathrm ds_2)-N(\mathrm dt,\mathrm ds_1)N(z+\mathrm du,\mathrm ds_2)\right|/(\mathrm du\,\mathrm dt\,\mathrm ds_1\,\mathrm ds_2)\nonumber\\
&\leq(\overline\nu+C_W\|h\|_{L^\infty(\mathbb R_+)})\mathbb E\left|\tilde N(z+\mathrm du,\mathrm ds_2)-N(z+\mathrm du,\mathrm ds_2)\right|/(\mathrm du\,\mathrm ds_2)\nonumber\\
&+\int_{(t-b,t)\times \mathscr X}\left|W(s_1,l)h(t-t')\right|(1-\mathbf1\{l=s_2,t=u+z\})\nonumber\\
&\quad\times\mathbb E\left|\tilde N(\mathrm dt',\mathrm dl)\tilde N(z+\mathrm du,\mathrm ds_2)-N(\mathrm dt',\mathrm dl) N(z+\mathrm du,\mathrm ds_2)\right|/(\mathrm du\,\mathrm ds_2)\nonumber\\
&\leq C T_{\mathrm{hom}}^n(\bar\lambda)(s_2)+C\int_{(t-b,t)\times \mathscr X}\left|W(s_1,l)h(t-t')\right|\sum_{i=1}^{n}T_{\mathrm{hom}}^{i-1}(\mathrm{id})(l)T_{\mathrm{hom}}^{n-i}(\mathrm{\bar\lambda})(s_2)\,\mathrm dt'\,\mathrm dl\nonumber\\
&\leq C\left(T_{\mathrm{hom}}^n(\bar\lambda)(s_2)+\sum_{i=1}^{n}T_{\mathrm{hom}}^{n-i}(\mathrm{\bar\lambda})(s_2)T_{\mathrm{hom}}(T_{\mathrm{hom}}^{i-1}(\mathrm{id}))(s_1)\right)\nonumber\\
&=C \sum_{i=1}^{n+1}T_{\mathrm{hom}}^{i-1}(\mathrm{id})(s_1)T_{\mathrm{hom}}^{n-i+1}(\mathrm{\bar\lambda})(s_2),\label{2nd order induction result}
\end{align}
where the second inequality follows by the induction hypothesis, which is applicable since $h$ has finite memory of length at most $b$.
This gives the induction step, hence we proved (\ref{IH 2nd order dev}).
\end{proof}

For later use, we summarize the findings of Step 3A of the proof of Lemma \ref{couplingdeviationslemma} in a separate result.
\begin{lemma}\label{step3Alemma}
Grant Assumption \ref{stability ass}. Let $N$ be a Hawkes process with intensity given in (\ref{conditional int dens}), starting from the empty history. 
Consider the first- and second-order mean intensities $\bar\lambda^{(1)}(t,x)$ and $\bar\lambda^{(2)}(t_1,x_1,t_2,x_2)$ defined in (\ref{1st order mean intensity})--(\ref{2nd order mean intensity}). It holds that we can find constants $c_i>0$ such that $\|\bar\lambda^{(i)}\|_{L^\infty}\leq c_i$ for $i=1,2$. Furthermore, it holds that $\|\bar\lambda\|_{L^\infty(\mathscr X)}<\infty$.
\end{lemma}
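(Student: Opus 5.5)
The plan is to extract the three claims directly from Step~3A of the proof of Lemma~\ref{couplingdeviationslemma}, in the order $\|\bar\lambda\|_{L^\infty(\mathscr X)}<\infty$, then $\bar\lambda^{(1)}$, then $\bar\lambda^{(2)}$.

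\emph{Boundedness of $\bar\lambda$.} Here $\bar\lambda$ solves $\bar\lambda=\nu+T_{\mathrm{hom}}(\bar\lambda)$. Assumption~\ref{stability ass}(ii), together with $W\geq0$, gives
\[
|T_{\mathrm{hom}}f(x)|\leq \|f\|_{L^\infty}\,T_{\mathrm{hom}}(\mathrm{id})(x)\leq\varrho_{T_{\mathrm{hom}}}\|f\|_{L^\infty}.
\]
Hence $\|T_{\mathrm{hom}}\|_{L^\infty\to L^\infty}\leq\varrho_{T_{\mathrm{hom}}}<1$. The Neumann series $\sum_{n\geq0}T_{\mathrm{hom}}^n\nu$ therefore converges in $L^\infty(\mathscr X)$, and by Assumption~\ref{stability ass}(iv),
\[
\|\bar\lambda\|_{L^\infty}\leq \overline\nu/(1-\varrho_{T_{\mathrm{hom}}}).
\]
Since $\mu(\mathscr X)<\infty$, this solution also lies in $L^1$. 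By $\rho(T_{\mathrm{hom}})<1$ the $L^1$ solution is unique, so it coincides with the stationary mean intensity of \cite{Baars2024}.

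\emph{First-order mean intensity.} I would use the monotone coupling of the process started from the empty history with the stationary process $N_{\mathrm{stat}}$. Both are built by thinning the same Poisson process $\bar N$, and since $W,h\geq0$ the intensities are ordered by induction over the Picard scheme. This gives $\bar\lambda^{(1)}(t,x)=\mathbb E\lambda(t,x)\leq \mathbb E_{\mathrm{stat}}\lambda(t,x)=\bar\lambda(x)$, so $c_1:=\|\bar\lambda\|_{L^\infty}$ works. Alternatively, iterating $\bar\lambda^{(1)}(t,\cdot)\leq \nu+T_{\mathrm{hom}}(\sup_{s\leq t}\bar\lambda^{(1)}(s,\cdot))$ gives the same bound without the coupling.

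\emph{Second-order mean intensity.} This is where the real work lies. Off the diagonal, by the thinning representation, $\bar\lambda^{(2)}(t_1,x_1,t_2,x_2)=\mathbb E[\lambda(t_1,x_1)\lambda(t_2,x_2)]$. Cauchy--Schwarz and the same monotone coupling bound this by $\max_{x}\mathbb E_{\mathrm{stat}}[\lambda(t,x)^2]$.

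Expanding $\lambda^2$ splits the integral against $N\otimes N$ into an off-diagonal part and a diagonal part. Two points need care here:
\begin{itemize}
\item The off-diagonal part involves the stationary second moment again, not just the product of first moments as written in (\ref{crude estimate}). I would therefore set up a closed inequality for $m_2:=\sup_{t,x}\mathbb E_{\mathrm{stat}}[\lambda(t,x)^2]$. After a Cauchy--Schwarz step, the off-diagonal term is at most $m_2\cdot(\sup_x T_{\mathrm{hom}}(\mathrm{id})(x))^2\leq\varrho_{T_{\mathrm{hom}}}^2 m_2$. The remaining terms are bounded by constants involving $\overline\nu$, $\|\bar\lambda\|_{L^\infty}$, $C_W$ and $\|h\|_{L^2}^2/\|h\|_{L^1}$, which is exactly $C_{\max}$-type.
\item Solving the closed inequality requires $m_2<\infty$ a priori. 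I would obtain this by truncation, for instance working with the Picard iterates $N^{(n)}$, which are dominated by finite Poisson clusters with finite second moments. Monotone convergence then passes the bound to the limit.
\end{itemize}
Solving the closed inequality gives $m_2\leq C/(1-\varrho_{T_{\mathrm{hom}}}^2)$, and setting $c_2:=2m_2$ completes the proof.
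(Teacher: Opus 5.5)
\textbf{There is a genuine gap in the $\bar\lambda^{(2)}$ part.} Your treatment of $\bar\lambda$ and $\bar\lambda^{(1)}$ matches the paper: the Neumann series under $\|T_{\mathrm{hom}}\|_{L^\infty\to L^\infty}\leq\varrho_{T_{\mathrm{hom}}}$, and the monotone coupling with $N_{\mathrm{stat}}$. You are also right that the expansion in (\ref{crude estimate}) wrongly replaces the off-diagonal second moment by a product of first moments.

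The problem is the step you keep from the paper, namely $\bar\lambda^{(2)}(t_1,x_1,t_2,x_2)=\mathbb E[\lambda(t_1,x_1)\lambda(t_2,x_2)]$ off the diagonal (the second equality in (\ref{crude estimate})). This identity is false, and your argument uses it twice: once to get $\bar\lambda^{(2)}\leq m_2$, and again inside the recursion to bound the off-diagonal term by $\varrho_{T_{\mathrm{hom}}}^2 m_2$.

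The thinning representation only decouples the later time. For $t_1<t_2$, conditioning on $\mathcal F_{t_2-}$ gives $\bar\lambda^{(2)}\,\mathrm dt_1\,\mathrm dx_1=\mathbb E[N(\mathrm dt_1,\mathrm dx_1)\lambda(t_2,x_2)]$. But $\lambda(t_2,x_2)$ is not independent of the slab of $\bar N$ at time $t_1$: a point at $(t_1,x_1)$ raises $\lambda(t_2,x_2)$, both directly and through its descendants. Writing $N(\mathrm dt,\mathrm dx)=\lambda(t,x)\,\mathrm dt\,\mathrm dx+M(\mathrm dt,\mathrm dx)$, one gets
\[
\bar\lambda^{(2)}(t_1,x_1,t_2,x_2)=\mathbb E[\lambda(t_1,x_1)\lambda(t_2,x_2)]+g(t_2,x_2),\qquad g(t,x):=\frac{\mathbb E[M(\mathrm dt_1,\mathrm dx_1)\lambda(t,x)]}{\mathrm dt_1\,\mathrm dx_1},
\]
where $g$ solves
\[
g(t,x)=W(x,x_1)h(t-t_1)\bar\lambda^{(1)}(t_1,x_1)+\int_{(t_1,t)\times\mathscr X}W(x,y)h(t-s)g(s,y)\,\mathrm ds\,\mathrm dy.
\]
In other words, $g$ is $\bar\lambda^{(1)}(t_1,x_1)$ times the resolvent of the kernel $W(x,y)h(t)$. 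This term is nonnegative and generally nonzero. So $\mathbb E[\lambda(t_1,x_1)\lambda(t_2,x_2)]$ is only a lower bound for $\bar\lambda^{(2)}$, and bounding it from above does not control $\bar\lambda^{(2)}$. For a univariate Hawkes process with kernel $ae^{-\beta t}$, one gets $g=\bar\lambda\,a e^{-(\beta-a)(t_2-t_1)}$. This is exactly the difference between Hawkes' covariance density and the autocovariance of the intensity.

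\textbf{The repair is short.} By Assumption \ref{stability ass}(ii),
\[
\sup g\leq C_W\|h\|_{L^\infty(\mathbb R_+)}\|\bar\lambda\|_{L^\infty(\mathscr X)}+\varrho_{T_{\mathrm{hom}}}\sup g,
\]
so $\|g\|_{L^\infty}\leq C_W\|h\|_{L^\infty(\mathbb R_+)}\|\bar\lambda\|_{L^\infty(\mathscr X)}/(1-\varrho_{T_{\mathrm{hom}}})$, uniformly in $(t_1,x_1)$. Hence $\bar\lambda^{(2)}\leq m_2+\|g\|_{L^\infty}$. In your recursion the off-diagonal term becomes at most $\varrho_{T_{\mathrm{hom}}}^2(m_2+\|g\|_{L^\infty})$, and the inequality still closes.

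You can also avoid the recursion, and its a priori finiteness issue, entirely. Note that $\lambda(t,x)\leq\overline\nu+C_W\|h\|_{L^\infty(\mathbb R_+)}N((t-b,t)\times\mathscr X)$. This window count is dominated by that of the one-dimensional Hawkes process with intensity (\ref{intensity uni Hawkes1d}), which has exponential moments. That gives $\sup_{t,x}\mathbb E[\lambda(t,x)^2]<\infty$ directly.
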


\begin{lemma}\label{expdecaylemma}
Grant Assumption \ref{stability ass}. Let $N$ be a Hawkes process with intensity given in (\ref{conditional int dens}), starting from the empty history. 
For any given positive time $z>0$, let $\tilde N\stackrel d= N$ be the coupling process constructed in Lemma~\ref{couplingdeviationslemma}.
Then it holds for $u\geq0$ 
\begin{align}
&\mathbb E\left|\tilde N(z+\mathrm du,\mathrm ds)-N(z+\mathrm du,\mathrm ds)\right|/(\mathrm du\,\mathrm ds)\leq c_1\exp(-c_2u);\label{exp 1st order}\\
&\mathbb E\left|\tilde N(\mathrm dt,\mathrm ds_1)\tilde N(z+\mathrm du,\mathrm ds_2)-N(\mathrm dt,\mathrm ds_1)N(z+\mathrm du,\mathrm ds_2)\right|/(\mathrm du\,\mathrm dt\,\mathrm ds_1\,\mathrm ds_2)\leq c_3\exp(-c_4u),\label{exp 2nd order}
\end{align}
where $c_i$, $i\in[4]$, are constants depending on $b,\varrho_{T_{\mathrm{hom}}}, h, W, \nu$.

\end{lemma}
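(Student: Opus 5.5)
The plan is to deduce both exponential bounds directly from Lemma~\ref{couplingdeviationslemma}, by showing that the iterates $T_{\mathrm{hom}}^n(\bar\lambda)$ and $T_{\mathrm{hom}}^{n}(\mathrm{id})$ decay geometrically in $n$, uniformly in the spatial variable. Since $n=\lfloor u/b\rfloor+1\geq u/b$, geometric decay in $n$ is exponential decay in $u$.

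\emph{Geometric decay of the iterates.} By Assumption~\ref{stability ass}(ii), $\sup_x|T_{\mathrm{hom}}(\mathrm{id})(x)|\leq\varrho_{T_{\mathrm{hom}}}<1$, where $\mathrm{id}$ is the constant function $1$. Since $W\geq0$ and $h\geq 0$ (so that $\|h\|_{L^1}$ appears multiplicatively), $T_{\mathrm{hom}}$ is a positive operator. For any bounded $f$ we have $|f|\leq\|f\|_{L^\infty}\mathrm{id}$, hence
\[
\|T_{\mathrm{hom}}f\|_{L^\infty}\leq\|f\|_{L^\infty}\sup_x T_{\mathrm{hom}}(\mathrm{id})(x)\leq\varrho_{T_{\mathrm{hom}}}\|f\|_{L^\infty}.
\]
Iterating gives $\|T_{\mathrm{hom}}^n f\|_{L^\infty}\leq\varrho_{T_{\mathrm{hom}}}^n\|f\|_{L^\infty}$. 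We apply this with $f=\mathrm{id}$ and with $f=\bar\lambda$; the latter is bounded by Lemma~\ref{step3Alemma}.

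\emph{First-order bound.} From the first inequality of Lemma~\ref{couplingdeviationslemma},
\[
2T_{\mathrm{hom}}^n(\bar\lambda)(s)\leq2\|\bar\lambda\|_{L^\infty}\varrho_{T_{\mathrm{hom}}}^{n}\leq 2\|\bar\lambda\|_{L^\infty}\exp\!\big(-u\log(1/\varrho_{T_{\mathrm{hom}}})/b\big),
\]
using $n\geq u/b$. This yields (\ref{exp 1st order}) with $c_1=2\|\bar\lambda\|_{L^\infty}$ and $c_2=\log(1/\varrho_{T_{\mathrm{hom}}})/b$.

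\emph{Second-order bound.} Each summand in the second inequality of Lemma~\ref{couplingdeviationslemma} is bounded by
\[
C\varrho_{T_{\mathrm{hom}}}^{i-1}\,\varrho_{T_{\mathrm{hom}}}^{n-i+1}\|\bar\lambda\|_{L^\infty}=C\|\bar\lambda\|_{L^\infty}\varrho_{T_{\mathrm{hom}}}^n.
\]
Summing the $n+1$ terms gives $C\|\bar\lambda\|_{L^\infty}(n+1)\varrho_{T_{\mathrm{hom}}}^n$. The polynomial prefactor is absorbed by a slightly smaller rate: for any $\varrho'\in(\varrho_{T_{\mathrm{hom}}},1)$ we have $(n+1)\varrho_{T_{\mathrm{hom}}}^n\leq c\,\varrho'^{\,n}$, and therefore (\ref{exp 2nd order}) holds with $c_4=\log(1/\varrho')/b$. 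The constants depend only on $b$, $\varrho_{T_{\mathrm{hom}}}$, and, through $C$ and $\|\bar\lambda\|_{L^\infty}$, on $h$, $W$, $\nu$.

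The only delicate point is the regime of small $u$, where the Lemma~\ref{couplingdeviationslemma} bounds are stated for the corresponding $n$. For $u<b$ we have $n=1$, so the bounds above are already constants, and these are absorbed by enlarging $c_1$ and $c_3$. I also need to check that $C_{\max}$ in $C$ is finite, which is established in Step~3A. The main obstacle is therefore only confirming the positivity of $T_{\mathrm{hom}}$ (i.e., $h\geq0$, implicit in the linear self-exciting model) so that the $L^\infty$ contraction argument is valid.
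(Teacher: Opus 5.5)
Your proposal is correct and follows the paper's proof essentially verbatim. Both arguments bound $T_{\mathrm{hom}}^n(\bar\lambda)\leq\|\bar\lambda\|_{L^\infty(\mathscr X)}\varrho_{T_{\mathrm{hom}}}^n$ via positivity and Assumption~\ref{stability ass}(ii), use $n\geq u/b$, and absorb the factor $n+1$ in the second-order bound by a slightly slower rate; the paper takes $\varrho'=\varrho_{T_{\mathrm{hom}}}^{1/2}$, i.e.\ $c_4=c_2/2$. Two minor remarks: positivity of $T_{\mathrm{hom}}$ needs only $W\geq0$, since $h$ enters only through $\|h\|_{L^1(\mathbb R_+)}$, and no separate treatment of small $u$ is needed because $n\geq u/b$ holds for every $u\geq0$.
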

\begin{proof}
We have seen in the proof of Lemma \ref{couplingdeviationslemma} that $\|\bar\lambda\|_{L^\infty(\mathscr X)}<\infty$. Hence for $n=\lfloor \frac ub+1\rfloor$ it holds that 
\begin{equation}
\mathbb E\left|\tilde N(z+\mathrm du,\mathrm ds)-N(z+\mathrm du,\mathrm ds)\right|/(\mathrm du\,\mathrm ds)\leq  2\|\bar\lambda\|_{L^\infty(\mathscr X)}T_{\mathrm{hom}}^n(\mathrm{id})(s)\leq 2\|\bar\lambda\|_{L^\infty(\mathscr X)}\varrho_{T_{\mathrm{hom}}}^n.
\end{equation}
Set $c_1=2\|\bar\lambda\|_{L^\infty(\mathscr X)}$ and $c_2=-\log(\varrho_{T_{\mathrm{hom}}})/b$, then (\ref{exp 1st order}) follows.

Similarly, for $n=\lfloor \frac ub+1\rfloor$ and $C$ as defined after (\ref{2nd order induction start}) it holds that 
\begin{align}
&\mathbb E\left|\tilde N(\mathrm dt,\mathrm ds_1)\tilde N(z+\mathrm du,\mathrm ds_2)-N(\mathrm dt,\mathrm ds_1)N(z+\mathrm du,\mathrm ds_2)\right|/(\mathrm du\,\mathrm dt\,\mathrm ds_1\,\mathrm ds_2)\\
&\leq C \sum_{i=1}^{n+1}T_{\mathrm{hom}}^{i-1}(\mathrm{id})(s_1)T_{\mathrm{hom}}^{n-i+1}(\mathrm{\bar\lambda})(s_2)\leq 2C\|\bar\lambda\|_{L^\infty(\mathscr X)}(n+1)\varrho_{T_{\mathrm{hom}}}^n\leq 2C\|\bar\lambda\|_{L^\infty(\mathscr X)}(n+1)\exp(-c_2nb).\nonumber
\end{align}
We can find $n_0$ such that for $n\geq n_0$, $$(n+1)\exp(-c_2nb)\leq\exp(-c_2nb/2).$$ Hence for $\lfloor \frac ub+1\rfloor\geq n_0$, 
\begin{equation}
\mathbb E\left|\tilde N(\mathrm dt,\mathrm ds_1)\tilde N(z+\mathrm du,\mathrm ds_2)-N(\mathrm dt,\mathrm ds_1)N(z+\mathrm du,\mathrm ds_2)\right|/(\mathrm du\,\mathrm dt\,\mathrm ds_1\,\mathrm ds_2)\leq 2C\|\bar\lambda\|_{L^\infty(\mathscr X)}\exp(-c_2nb/2).
\end{equation}
Therefore, setting $c_3=2C\|\bar\lambda\|_{L^\infty(\mathscr X)}(n_0+1)$ and $c_4=c_2/2$, (\ref{exp 2nd order}) can be seen to hold for any $u\geq0$.
\end{proof}

\phantom{.}\newline
{\it Proof of Theorem~\ref{concentration ineq}}.
We divide the proof according to the order of the functional. For the first-order functional, we partition the time interval into blocks of length $2\epsilon$ and write $y_1$ as the empirical mean of the resulting block variables $(y_{1,i})$. We then verify the two conditions of \cite{Merlevede2011}, Theorem~1: an exponential tail bound for $y_{1,i}$ and an exponential bound on its temporal dependence coefficient. The latter is obtained by coupling the process with an independent copy. Theorem~1 then yields (\ref{1st order bernstein}).
The second-order case follows the same construction, with block variables $(y_{2,i})$. Here, the tail bound has exponent $1/2$, reflecting the product structure of the second-order functional. The coupling argument again gives exponential decay of the temporal dependence coefficient. Applying \cite{Merlevede2011}, Theorem~1 with this tail exponent yields (\ref{2nd order bernstein}).
The overall argument follows the Bernstein-type strategy of \cite{Cai2024, Chen2019}, but is adapted here to spatiotemporal point processes rather than finite-dimensional multivariate processes.

\emph{First-order functionals}. For a small constant $\epsilon>0$ chosen such that $\frac T{2\epsilon}\in\mathbb N$, we consider the partial empirical averages 
\begin{equation}\label{y_i def}
y_{1,i}:=\frac1{2\epsilon}\int_{[2\epsilon(i-1),2\epsilon i]\times\mathscr X}f_1(t,x)\,N(\mathrm dt,\mathrm dx),\quad 1\leq i\leq \frac T{2\epsilon}.
\end{equation}
Note that $y_1$ can be seen as the sample average over $(y_{1,i})_{1\leq i\leq \frac T{2\epsilon}}$, i.e., 
\begin{equation}\label{sample mean eq}
y_1=\overline{y_{1,\cdot}}=\frac{2\epsilon}T\sum_{i=1}^{T/(2\epsilon)}y_{1,i}.
\end{equation}
To quantify the temporal dependence in the sequence $(y_{1,i})_{1\leq i\leq T/(2\epsilon)}$, we use the $\tau$-dependence coefficient of \cite{DedeckerPrieur2004}. For a gap $l\in\mathbb N$, this coefficient is given by
\begin{equation}\label{temporal dependence}
\tau_y(l)=\sup_{u\in\{0,1,\ldots,T/(2\epsilon)-l\}}\tau\left(\mathcal H_u^y,y_{1,u+l}\right),
\end{equation}
where $\mathcal H_u^y:=\sigma(y_{1,v}:1\leq v\leq u)$. For a random variable $X$ and a $\sigma$-algebra $\mathcal M$, the coefficient $\tau(\mathcal M,X)$ is defined as
$$
\tau(\mathcal M,X):=\mathbb E\left[\sup_{f\in L^\infty:\|f\|_{L^\infty}\leq1}\left|\int f(x)\,\mathbb P_{X|\mathcal M}(\mathrm dx)-\int f(x)\,\mathbb P_X(\mathrm dx)\right|\right].
$$
Moreover, \cite{DedeckerPrieur2004}, Lemma~3, provides the bound
\begin{equation}\label{lemma3DP04}
\tau(\mathcal M,X)\leq\mathbb E|X-Y|,
\end{equation}
where $Y\stackrel{d}{=}X$ is independent of $\mathcal M$.

In this proof, we establish that $(y_{1,i})_{1\leq i\leq T/(2\epsilon)}$ forms a weakly dependent sequence, in the sense that the two conditions in Equations (2.6)–(2.8) of \cite{Merlevede2011} are satisfied. The desired Bernstein-type inequality then follows directly from Theorem 1 of \cite{Merlevede2011}.

First, we prove Equation (2.7) in \cite{Merlevede2011}: we show, for constants $c_1,c_2>0$,
\begin{equation}
\sup_{1\leq i\leq T/(2\epsilon)}\mathbb P(|y_{1,i}|\geq x)\leq c_1\exp(c_2x), \quad x>0.\label{1st order cond1}
\end{equation}
To prove (\ref{1st order cond1}), note that by boundedness of $f_1$ it holds that $$|y_{1,i}|\leq\frac{\|f_1\|_{L^\infty}}{2\epsilon}N([2\epsilon(i-1),2\epsilon i]\times\mathscr X).$$ 
Therefore, we look for an exponential bound on the tail of $N([2\epsilon(i-1),2\epsilon i]\times\mathscr X)$. As $N$ starts from the empty history, it holds that 
\begin{equation}
\mathbb P\left(N([2\epsilon(i-1),2\epsilon i]\times\mathscr X)\geq n \right)\leq\mathbb P_{\mathrm{stat}}\left(N([2\epsilon(i-1),2\epsilon i]\times\mathscr X)\geq n \right),\label{exp bound stationary}
\end{equation}
where
$\mathbb P_{\mathrm{stat}}$ denotes the law of the process $N_{\mathrm{stat}}$ starting in stationarity.
By stationarity, the right-hand side of (\ref{exp bound stationary}) is independent of $i$, hence it suffices to obtain a bound of the form 
\begin{equation}
\mathbb P_{\mathrm{stat}}\left(N([0,2\epsilon ]\times\mathscr X)\geq n \right)\leq c_1\exp(-c_2 n), \quad x>0.\label{1st order exp bound general form}
\end{equation}
Since $\overline\nu<\infty$, our process consists of a stream of immigrants that can be bounded by a stream of immigrants of rate $\overline\nu\mu(\mathscr X)$ with uniform spatial coordinates. In turn, for each of those immigrants, since $\sup_{x\in\mathscr X}|T_{\mathrm{hom}}(\mathrm{id})(x)|\leq\varrho_{T_{\mathrm{hom}}}$, the size of the total offspring for each spatially uniformly distributed immigrant can be bounded in expectation by $({1-\varrho_{T_{\mathrm{hom}}}})^{-1}$. 
This essentially reduces the dimension of the system to $1$ (see, e.g., \cite{Baars2024}, Remark 1 and Section 4.2). 
Existence of exponential moments for the bounding process (and therefore, for $N$) follows by standard results on existence of exponential moments for Hawkes processes or for Galton-Watson branching processes, e.g., \cite{Leblanc2025b,Leblanc2025,KarimLaevenMandjes2025,AL26}. 
Hence, there exists $\theta^*>0$ such that for $0<\theta<\theta^*$
$$
\mathbb E_{\mathrm{stat}}\left[\exp(\theta N([0,2\epsilon ]\times\mathscr X))\right]<\infty. 
$$
By a standard Chernoff bound, (\ref{1st order exp bound general form}) follows.

Second, we prove Equation (2.6) in \cite{Merlevede2011}. 
We show, for $\tau_y(l)$ defined in (\ref{temporal dependence}), for constants $c_3,c_4>0$,
\begin{equation}\label{1st order cond2}
\tau_y(l)\leq c_3\exp(-c_4 l), \quad l\in\mathbb N.
\end{equation}
As in Lemma \ref{couplingdeviationslemma}, let $\tilde N\stackrel d= N$ be the process independent of $\mathcal H_{2\epsilon z}$, the $\sigma$-algebra generated by $N$ up to time $2\epsilon z$. We construct the counterpart $\tilde y_{1,i}$ of $y_{1,i}$ through $\tilde N$:
\begin{equation}\label{tilde y def}
\tilde y_{1,i}=\frac1{2\epsilon}\int_{[2\epsilon(i-1),2\epsilon i]\times\mathscr X}f_1(t,x)\,\tilde N(\mathrm dt,\mathrm dx),\quad 1\leq i\leq \frac T{2\epsilon}.
\end{equation}
Since  $\tilde N\stackrel d= N$, it follows that $(\tilde y_{1,i})_{1\leq i\leq \frac T{2\epsilon}} \stackrel d= (y_{1,i})_{1\leq i\leq \frac T{2\epsilon}}$. Furthermore, since $\tilde N$ is independent of $\mathcal H_{2\epsilon z}$, it follows that $(\tilde y_{1,i})_{1\leq i\leq \frac T{2\epsilon}}$ is independent of $( y_{1,i})_{1\leq i\leq z}$. Furthermore, by Lemma \ref{expdecaylemma} it holds that 
\begin{align}
\mathbb E\left|\tilde y_{1,z+l}-y_{1,z+l}\right|&=\frac1{2\epsilon}\mathbb E\left|\int_{[2\epsilon(z+l-1),2\epsilon (z+l)]\times\mathscr X}f_1(t,x)\left(\tilde N(\mathrm dt,\mathrm dx)-N(\mathrm dt,\mathrm dx)\right)\right|\nonumber\\
&\leq\frac1{2\epsilon}\int_{[2\epsilon(z+l-1),2\epsilon (z+l)]\times\mathscr X}\left|f_1(t,x)\right|\mathbb E\left|\tilde N(\mathrm dt,\mathrm dx)-N(\mathrm dt,\mathrm dx)\right|\nonumber\\
&\leq\frac1{2\epsilon}2\epsilon\mu(\mathscr X)\|f_1\|_{L^\infty}c_5\exp(-c_6(l-1)),
\end{align}
giving an exponential bound of the form (\ref{1st order cond2}).

Using (\ref{1st order cond1}) and (\ref{1st order cond2}), we may apply \cite{Merlevede2011}, Theorem 1, with $\gamma_1=\gamma_2=1$, hence $\gamma=\frac12$. This gives, for $\delta\in(0,1)$, a bound of the form
\begin{align}\label{Merlevede 1st order bound}
\mathbb P\left(\left|\sum_{i=1}^{T/(2\epsilon)}(y_{1,i}-\mathbb E[y_{1,i}]\right|\geq c_7\frac T{2\epsilon}T^{-\delta}\right)\leq q_1 T\exp(-q_2 T^{(1-\delta)/2})+q_3\exp(-q_4T^{1-2\delta}).
\end{align}
For $\delta=\frac25$, using (\ref{sample mean eq}), we obtain the bound (\ref{1st order bernstein}).

\emph{Second-order functionals}. 
As for the first-order statistic $y_1$, take a small constant $\epsilon>0$ chosen such that $\frac T{2\epsilon}\in\mathbb N$, and consider the partial empirical averages 
\begin{equation}\label{y_2i def}
y_{2,i}:=\frac1{2\epsilon}\int_{[2\epsilon(i-1),2\epsilon i]\times\mathscr X}\int_{(0,T)\times\mathscr X}f_2(t-s,x,y)\,N(\mathrm dt,\mathrm dx)\,N(\mathrm ds,\mathrm dy),\quad 1\leq i\leq \frac T{2\epsilon}.
\end{equation}
Note that $y_2$ can be seen as the sample average over $(y_{2,i})_{1\leq i\leq \frac T{2\epsilon}}$, i.e., 
\begin{equation}\label{sample mean eq2}
y_2=\overline{y_{2,\cdot}}=\frac{2\epsilon}T\sum_{i=1}^{T/(2\epsilon)}y_{2,i}.
\end{equation}
In this part of the proof, let $\mathcal H_u^y:=\sigma(y_{1,v}:1\leq v\leq u)$, now measuring the temporal dependence by 
\begin{equation}\label{temporal dependence2}
\tau_y(l)=\sup_{u\in\{0,1,\ldots,T/(2\epsilon)-l\}}\tau\left(\mathcal H_u^y,y_{2,u+l}\right).
\end{equation}
As before, we show that $(y_{2,i})_{1\leq i\leq \frac T{2\epsilon}}$ is a weakly dependent in order to apply \cite{Merlevede2011}, Theorem 1. In order to do this, we need to check the two conditions given in Equations (2.6)-(2.8) in \cite{Merlevede2011}.

First, we prove Equation (2.7) in \cite{Merlevede2011}: we show, for constants $c_8,c_9>0$,
\begin{equation}
\sup_{1\leq i\leq T/(2\epsilon)}\mathbb P(|y_{2,i}|\geq x)\leq c_8\exp(-c_9x^{1/2}), \quad x>0.\label{2nd order cond1}
\end{equation}
We have assumed that $f_2(t,x,y)=0$ for $t\notin B$, for some bounded set $B$. By translating $f_2$, if necessary, we may assume that $B=[0,\kappa]$, for some $\kappa>0$. Then it holds that $$|y_{2,i}|\leq\frac{\|f_2\|_{L^\infty}}{2\epsilon}N([2\epsilon(i-1),2\epsilon i]\times\mathscr X)N([2\epsilon(i-1),2\epsilon i+\kappa]\times\mathscr X).$$ 
As in the proof for the first-order probabilities, $N([2\epsilon(i-1),2\epsilon i+\delta]\times\mathscr X)$, $\delta=0,\kappa$, can be shown to have exponential tails of order $1$. From this, it follows that $y_{2,i}$ has exponential tails of order $\frac12$, i.e., (\ref{2nd order cond1}) holds. Indeed, suppose that $Z_1,Z_2$ are random variables with $\mathbb P(|Z_i|\geq x)\leq c_{1,i}\exp(-c_{2,i}x)$, for $x>0$ and for positive constants $c_{i,j}$, $i,j=1,2$. Then 
$$
\mathbb P\left(|Z_1Z_2|\geq x\right)\leq \mathbb P\left(|Z_1|\geq x^{1/2}\right)+\mathbb P\left(|Z_1|\geq x^{1/2}\right)\leq (c_{1,1}+ c_{1,2})\exp\left(-(c_{2,1}\wedge c_{2,2})x^{1/2}\right).
$$

Second, we prove Equation (2.6) in \cite{Merlevede2011}. We show, for $\tau_y(l)$ defined in (\ref{temporal dependence2}), for constants $c_{10},c_{11}>0$,
\begin{equation}\label{2nd order cond2}
\tau_y(l)\leq c_{10}\exp(-c_{11} l), \quad l\in\mathbb N.
\end{equation}
As in the proof for the first-order functionals, take $\tilde N\stackrel d= N$ with $\tilde N$ independent of $\mathcal H_{2\epsilon z}$. We construct the counterpart $\tilde y_{2,i}$ of $y_{2,i}$ through $\tilde N$:
\begin{equation}\label{tilde y_2 def}
\tilde y_{2,i}=\frac1{2\epsilon}\int_{[2\epsilon(i-1),2\epsilon i]\times\mathscr X}\int_{(0,T)\times\mathscr X}f_2(t-s,x,y)\,\tilde N(\mathrm dt,\mathrm dx)\,\tilde N(\mathrm ds,\mathrm dy),\quad 1\leq i\leq \frac T{2\epsilon}.
\end{equation}
Since  $\tilde N\stackrel d= N$, it follows that $(\tilde y_{2,i})_{1\leq i\leq \frac T{2\epsilon}} \stackrel d= (y_{2,i})_{1\leq i\leq \frac T{2\epsilon}}$. Furthermore, since $\tilde N$ is independent of $\mathcal H_{2\epsilon z}$, it follows that $(\tilde y_{2,i})_{1\leq i\leq \frac T{2\epsilon}}$ is independent of $( y_{2,i})_{1\leq i\leq z}$. Furthermore, by Lemma \ref{expdecaylemma} it holds that 
\begin{align}
&\mathbb E\left|\tilde y_{2,z+l}-y_{2,z+l}\right|\nonumber
\\&=\frac1{2\epsilon}\mathbb E\left|\int_{[2\epsilon(l-1),2\epsilon l]\times\mathscr X}\int_{(0,T)\times\mathscr X}f_2(t-s,x,y)\left(\tilde N(\mathrm dt,\mathrm dx)\tilde N(\mathrm ds,\mathrm dy)- N(\mathrm dt,\mathrm dx) N(\mathrm ds,\mathrm dy)\right)\right|\nonumber\\
&\leq\frac1{2\epsilon}\int_{[2\epsilon(l-1),2\epsilon l]\times\mathscr X}\int_{[t-\kappa,t]\times\mathscr X}\left|f_2(t-s,x,y)\right|\left|\tilde N(\mathrm dt,\mathrm dx)\tilde N(\mathrm ds,\mathrm dy)- N(\mathrm dt,\mathrm dx) N(\mathrm ds,\mathrm dy)\right|\nonumber\\
&\leq\frac{\|f_2\|_{L^\infty}}{2\epsilon}\mu(\mathscr X)^2\kappa c_{12}\exp(-c_{13}(2\epsilon(l-1)-\kappa))
\end{align}
giving an exponential bound of the form (\ref{2nd order cond2}).

Using (\ref{2nd order cond1}) and (\ref{2nd order cond2}), we may apply \cite{Merlevede2011}, Theorem 1, with $\gamma_1=1$, $\gamma_2=\frac12$, hence $\gamma=\frac13$. This gives, for $\delta\in(0,1)$, a bound of the form
\begin{align}\label{Merlevede 1st order bound2}
\mathbb P\left(\left|\sum_{i=1}^{T/(2\epsilon)}(y_{2,i}-\mathbb E[y_{2,i}]\right|\geq c_{14}\frac T{2\epsilon}T^{-\delta}\right)\leq q_5 T\exp(-q_6 T^{(1-\delta)/3})+q_7\exp(-q_8T^{1-2\delta}).
\end{align}
For $\delta=\frac25$, using (\ref{sample mean eq2}), we obtain the bound (\ref{2nd order bernstein}).
\qed

\subsection*{Probabilistic lower bound for smallest restricted eigenvalue of $G$}
A lower bound on the minimum (restricted) eigenvalue of the corresponding Gram matrix is a standard requirement in high-dimensional sparse estimation, ensuring that the design remains sufficiently non-degenerate along sparse directions; see, e.g., \cite{Buhlmann2011}, Chapter 6, and \cite{Cai2024}, Lemma S11.

We now establish an \emph{unrestricted} eigenvalue condition on $G$, where $G$ is the random Gram matrix defined in (\ref{G random matrix}). 
In deriving a lower bound for this eigenvalue, we impose some structure on the orthonormal sequence $(\varphi_l)_{l\in\mathbb N}$ through Assumption \ref{min eigenvalue ass}.

\begin{lemma}\label{lemma min eigenvalue}
Grant Assumption \ref{stability ass}-\ref{ass PSD C0}. Suppose that $MH=o(T^{2/5})$. Consider the random Gram matrix $G$ defined in (\ref{G random matrix}). Then we can find $\Sigma_{\min}>0$ and constants $q_1,q_2>0$ such that when $T$ is sufficiently large, with probability at least $1-q_1T\exp(-q_2T^{1/5})$ it holds that 
$\sigma_{\min}(G)\geq\Sigma_{\min}$.
\end{lemma}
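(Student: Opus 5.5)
The plan is to compare $G$ with its expectation $\bar G:=\mathbb E G$, establish a uniform lower bound on the smallest eigenvalue of $\bar G$, and then control $\|G-\bar G\|$ entrywise via Theorem~\ref{concentration ineq} combined with a union bound. Weyl's inequality gives
\[
\sigma_{\min}(G)\geq\sigma_{\min}(\bar G)-\|G-\bar G\|_{\mathrm{op}}.
\]
It therefore suffices to show two things: first, that $\sigma_{\min}(\bar G)\geq 2\Sigma_{\min}$ for all large $T$; second, that $\|G-\bar G\|_{\mathrm{op}}\leq\Sigma_{\min}$ with the stated probability.

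\emph{Population lower bound.} Fix a unit vector $v=(v_0,(v_{l,m}))\in\mathbb R^{1+MH}$. Set $g(u,y):=\sum_{l,m}v_{l,m}\varphi_l(y)\chi_m(u)$, so that $v^\top Gv=\frac1T\int_0^T(v_0+\int_{(0,t)\times\mathscr X}g(t-s,y)\,N(\mathrm ds,\mathrm dy))^2\,\mathrm dt$. Taking expectations and writing $\mathbb E[NN]$ as $\bar\lambda^{(1)}\otimes\bar\lambda^{(1)}+C^{(0)}$ plus the diagonal term, I obtain
\[
\mathbb E[v^\top Gv]=\frac1T\int_0^T\Big[(v_0+m_t)^2+\iint g\,C^{(0)}\,g+\int_{(0,t)\times\mathscr X}g(t-s,y)^2\,\bar\lambda^{(1)}(s,y)\,\mathrm ds\,\mathrm dy\Big]\mathrm dt,
\]
where $m_t=\int g(t-s,y)\bar\lambda^{(1)}(s,y)\,\mathrm ds\,\mathrm dy$. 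Assumption~\ref{ass PSD C0} removes the covariance term (it is nonnegative). For the diagonal term, I need $\bar\lambda^{(1)}\geq\underline\nu$ (immediate, since $\lambda\geq\nu$), which gives for $t\geq b$ the bound $\underline\nu\int_0^b\int g(u,y)^2\,\mathrm dy\,\mathrm du=\underline\nu\sum_m c_m\sum_l v_{l,m}^2$ by orthogonality. This only controls the $(l,m)$-block and may be small in $v_0$ when $\sum v_{l,m}^2$ is small. I therefore combine it with the first term by a case split. If $\sum_{l,m}v_{l,m}^2\geq\epsilon$, the diagonal term alone gives the bound $\underline\nu\min_m c_m\,\epsilon$; here I need $c_m$ bounded below, and I would normalise the $\chi_m$ to achieve this. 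Otherwise $v_0^2\geq1-\epsilon$ and $|m_t|\leq\|\bar\lambda^{(1)}\|_\infty\int|g|\leq c\sqrt{\mu(\mathscr X)b}\,\sqrt\epsilon$ by Cauchy--Schwarz and Lemma~\ref{step3Alemma}, so $(v_0+m_t)^2\geq\frac14$ once $\epsilon$ is small. The boundary interval $t<b$ contributes only $O(1/T)$. Together these give $2\Sigma_{\min}>0$, independent of $M,H,T$.

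\emph{Concentration.} The entries $G_{0,(l,m)}$ are first-order functionals $y_1$ with $f_1(s,y)=\varphi_l(y)\frac1T\int_s^T\chi_m(t-s)\,\mathrm dt$. The entries $G_{(l,m),(l',m')}$ are second-order functionals $y_2$ with $f_2(s-s',y,y')=\varphi_l(y)\varphi_{l'}(y')\int\chi_m(t-s)\chi_{m'}(t-s')\,\mathrm dt$, up to boundary effects, and these have support in $|s-s'|\leq b$, as Theorem~\ref{concentration ineq} requires. Their sup norms scale as $\|\varphi_l\|_\infty\|\varphi_{l'}\|_\infty\mathfrak h_1\mathfrak h_\infty\lesssim M$ by Assumption~\ref{min eigenvalue ass}. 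Because the constants of Theorem~\ref{concentration ineq} are linear in $\|f\|_\infty$ (this should be checked in its proof), rescaling gives each entry deviation at most $cMT^{-2/5}$ with probability at least $1-qT e^{-q'T^{1/5}}$. A union bound over the $(1+MH)^2$ entries only multiplies the failure probability by a polynomial factor, which the $e^{-q'T^{1/5}}$ term absorbs after adjusting $q_1,q_2$.

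\emph{Main obstacle.} The main difficulty is converting the entrywise bound into an operator-norm bound. The crude Frobenius bound gives $\|G-\bar G\|_{\mathrm{op}}\leq(1+MH)\,cMT^{-2/5}$, which contains an extra factor of $M$ beyond $MH=o(T^{2/5})$. I would try to remove this factor by applying the concentration inequality directly to the quadratic form $v^\top(G-\bar G)v$, using $f_2=g\otimes g$ with $\|g\|_\infty\leq\sqrt{cM}\,\mathfrak h_\infty\|v\|_{\ell^1}\leq\sqrt{cMH}\,\mathfrak h_\infty$. Equivalently, I would bound $\|\sum_l v_{l,m}\varphi_l\|_\infty$ more carefully, followed by an $\epsilon$-net over the sphere. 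The net has cardinality $e^{O(MH)}$, which is again absorbed since $MH=o(T^{1/5})$ is not assumed. If the net's size cannot be absorbed, I would fall back to the entrywise route and accept a condition of the form $M^2H=o(T^{2/5})$, which is the regime actually used in Theorem~\ref{consistency thm}.
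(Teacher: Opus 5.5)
Your population bound is sound and is in substance the paper's Step~1. It uses the same split of $\mathbb E[v^\top Gv]$ into three parts: the squared-mean term, the covariance term (nonnegative by Assumption~\ref{ass PSD C0}), and the diagonal term (bounded below through $\underline\nu$ and orthogonality). Your case split on the size of $\sum_{l,m}v_{l,m}^2$ replaces the paper's absorption $|B_2|\le\varsigma B_1+\varsigma^{-1}B_3$ with $B_3\le C\|\Delta^{(MH)}\|_{\ell^2}^2$; either works.

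The genuine gap is the one you flag yourself. Nothing in the proposal bounds $\sup_{\|v\|_{\ell^2}=1}|v^\top(G-\mathbb E G)v|$ under the stated hypothesis $MH=o(T^{2/5})$, and neither of your fallback routes reaches it.
\begin{itemize}
\item A $\tfrac14$-net of the unit sphere of $\mathbb R^{1+MH}$ has up to $9^{1+MH}$ points. A union bound against $qT\exp(-q'T^{1/5})$ therefore needs $MH\le cT^{1/5}$ with $c$ small, so under the stated hypothesis the net is not absorbed.
\item The entrywise/Frobenius route needs $M^2H=o(T^{2/5})$. Contrary to your last sentence, this is not the regime of Theorem~\ref{consistency thm}. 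Its conditions $M^2H\log(T)/\sqrt T=o(1)$ and $MH=o(T^{2/5})$ allow, e.g., $H=1$ and $M=T^{0.22}$, for which $M^2H=T^{0.44}$.
\item Your per-direction estimate loses a factor. For a unit vector $\|v\|_{\ell^1}\le\sqrt{1+MH}$, so $\sqrt{cM}\,\mathfrak h_\infty\|v\|_{\ell^1}$ is of order $M\sqrt H$, not $\sqrt{MH}$.
\item After using orthogonality in time, the sup norm of the quadratic-form kernel is governed by $\sup_y\sum_{l\le M}\varphi_l(y)^2$. Assumption~\ref{min eigenvalue ass}(i) only bounds this by $cM^2$, and that is sharp: it equals $M^2$ at $y=1$ for the Legendre basis, though it is of order $M$ for Haar wavelets.
\end{itemize}
As written, your argument proves the lemma only under $M^2H=o(T^{2/5})$. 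There the entrywise route is complete, provided the constants of Theorem~\ref{concentration ineq} scale linearly in $\|f\|_{L^\infty}$ uniformly over the family of kernels; the paper also uses this tacitly.

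You will not find the missing step in the paper either. Its Step~2 bypasses entries and applies Theorem~\ref{concentration ineq} directly to $\Delta^\top(G-\mathbb E[G])\Delta$, with kernels $f_1,f_2,f_3$ built from $\Delta$. It asserts $\|f_i\|_{L^\infty}\le\mathfrak h_2^2\|\Delta^{(MH)}\|_{\ell^2}^2\max_i\|\varphi_i\|_{L^\infty}^2$, which is of order $M$, deduces a deviation of $o(\|\Delta\|_{\ell^2}^2)$, and then invokes the Rayleigh quotient. Two problems follow:
\begin{itemize}
\item No net or union bound over $\Delta$ is taken, so the high-probability event depends on $\Delta$. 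The uniform conclusion $\sigma_{\min}(G)\ge\Sigma_{\min}$ therefore does not follow as written.
\item That sup-norm bound is exactly the one that fails when $\sup_y\sum_l\varphi_l(y)^2$ is of order $M^2$.
\end{itemize}
So the obstacle you isolated is real and is not resolved in the paper. Proving the lemma under $MH=o(T^{2/5})$ needs an ingredient neither argument contains, or else the growth condition must be strengthened. One candidate is a one-sided lower-tail argument exploiting that $G=\frac1T\int_0^T\Psi(t)\Psi(t)^\top\,\mathrm dt$, with $\Psi(t):=(\Psi_n(t))_n$, is an average of positive semidefinite matrices.

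Two smaller remarks:
\begin{itemize}
\item You are right that the diagonal term needs $\inf_m c_m>0$; the paper uses $\mathfrak h_2$, a supremum, at that point. You need not renormalise, which could clash with the equal-$L^1$-norm convention of Assumption~\ref{min eigenvalue ass}(ii). Since $\chi_m$ lives on $[0,b]$, Cauchy--Schwarz already gives $c_m\ge\mathfrak h_1^2/b$.
\item The kernel for $G_{0,(l,m)}$ is $\varphi_l(y)\int_s^T\chi_m(t-s)\,\mathrm dt$, without your extra factor $1/T$, because the normalisation is already inside $y_1$.
\end{itemize}
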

\begin{proof}
The proof consists of two steps. In the first step, we bound the minimum restricted eigenvalue $\Delta^\top \mathbb E[G]\Delta$ of the population matrix $\mathbb E[G]$; in the second step, we bound the difference between $\Delta^\top G\Delta$ and $\Delta^\top \mathbb E[G]\Delta$. 

In this proof, we employ the Rayleigh quotient characterization of the minimal eigenvalue,
$$\sigma_{\min}(G)=\min_{\Delta\in\mathbb R^{1+MH}\setminus\{0\}}\frac{\Delta^\top G\Delta}{\|\Delta\|_{\ell^2}^2}.$$ This implies that if we can bound $\Delta^\top G\Delta\geq\Sigma_{\min}\|\Delta\|_{\ell^2}^2$ for any $\Delta\in\mathbb R^{1+MH}$, then $\sigma_{\min}(G)\geq\Sigma_{\min}$.

\emph{Step 1}. 
 Write $\Delta=\begin{bmatrix}\Delta_0\\\Delta^{(MH)}\end{bmatrix}\in\mathbb R^{1+MH}$, where $\Delta_0\in\mathbb R$ denotes the first element of $\Delta$, and where the remaining $MH$ elements are captured by the vector $\Delta^{(MH)}\in\mathbb R^{MH}$. For $l\in[M],m\in[H]$, let $$\varpi_{l,m}(t):=\int_{(0,t)\times\mathscr X}\varphi_l(y)\chi_m(t-s)\bar\lambda^{(1)}(s,y)\, \mathrm ds\,\mathrm dy.$$ 
 Using the definitions \eqref{1st order mean intensity}--\eqref{covariance def}, note that the second moment measure admits the decomposition
\begin{align}\label{second moment decomposition}
\mathbb E\left[N(\mathrm ds,\mathrm dy)N(\mathrm ds',\mathrm dy')\right]
&=
\left(\bar\lambda^{(1)}(s,y)\bar\lambda^{(1)}(s',y')
+C^{(0)}(s,y,s',y')\right)\, \mathrm ds\, \mathrm dy \, \mathrm ds'\,\mathrm dy'\nonumber\\
&\quad+\mathbf{1}_{{(s,y)=(s',y')}}\bar\lambda^{(1)}(s,y)\, \mathrm ds\, \mathrm dy.
\end{align}
 By expanding $$
\frac1{T}\int_{(0,T)}\int_{(0,t)\times\mathscr X}\int_{(0,t)\times\mathscr X}\varphi_l(y)\chi_m(t-s)\varphi_{l'}(y')\chi_{m'}(t-s')\,\mathbb E\left[N(\mathrm ds,\mathrm dy)N(\mathrm ds',\mathrm dy')\right]\mathrm dt,
$$ 
we obtain 
\begin{equation}\label{decomposition}
\Delta^\top\mathbb E[G]\Delta
=
\sum_{i=1}^5 B_i,
\end{equation}
with $B_1
:= \Delta_0^2$ and
\begin{align}
B_2
&:= \frac1{T}\int_{(0,T)}
2\Delta_0
\sum_{\substack{l\in[M]\\m\in[H]}}
\Delta_{l,m}\varpi_{l,m}(t)\,\mathrm dt, \\
B_3
&:= \sum_{\substack{l,l'\in[M]\\m,m'\in[H]}}
\Delta_{l,m}
\frac1{T}\int_{(0,T)}
\varpi_{l,m}(t)\varpi_{l',m'}(t)\,\mathrm dt\, 
\Delta_{l',m'}, \\
B_4
&:= \sum_{\substack{l,l'\in[M]\\m,m'\in[H]}}
\Delta_{l,m}
\frac1{T}\int_{(0,T)}
\int_{(0,t)\times\mathscr X}
\int_{(0,t)\times\mathscr X}\nonumber\\
&\qquad\qquad\qquad\varphi_l(y)\,\chi_m(t-s)\,\varphi_{l'}(y')\,\chi_{m'}(t-s')\,C^{(0)}(s,y,s',y')
\,\mathrm ds\,\mathrm dy\,\mathrm ds'\,\mathrm dy'\,\mathrm dt\, 
\Delta_{l',m'}, \\
B_5
&:= \sum_{\substack{l,l'\in[M]\\m,m'\in[H]}}
\Delta_{l,m}
\frac1{T}\int_{(0,T)}
\int_{(0,t)\times\mathscr X}
\varphi_l(y)\varphi_{l'}(y)
\chi_m(t-s)\chi_{m'}(t-s)
\bar\lambda^{(1)}(s,y)
\,\mathrm ds\,\mathrm dy\,\mathrm dt\, 
\Delta_{l',m'}.
\end{align}
Here, Assumption \ref{ass PSD C0} implies that that $B_4\geq0$. Next, we prove that $B_5\geq\Gamma_{\min}\|\Delta^{(M)}\|_{\ell^2}^2$, where $\Gamma_{\min}:=\mathfrak h_2^2\underline\nu/2$, with $\mathfrak h_2$ as in Assumption \ref{min eigenvalue ass}(iii).
This follows from 
\begin{align}
B_5&=\frac1{T}\int_{(0,T)}\int_{(0,t)\times\mathscr X}\xi_\Delta(t-s,y)^2\bar\lambda^{(1)}(s,y)\, \mathrm ds\, \mathrm dy\, \mathrm dt\nonumber\\
&\geq \frac1{T}\int_{(0,T)}\int_{(0,t)\times\mathscr X}\xi_\Delta(t-s,y)^2\underline\nu\, \mathrm ds\, \mathrm dy\, \mathrm dt\nonumber\\
&\geq\frac12\underline\nu\sum_{\substack{l,l'\in[M]\\m,m'\in[H]}}\Delta_{l,m}\Delta_{l',m'}\varphi_l(y)\varphi_{l'}(y)\, \mathrm dy\int_{(0,b)}\chi_m(t)\chi_{m'}(t)\, \mathrm dt\nonumber\\
&\geq\frac12\mathfrak h_2^2\,\underline\nu \|\Delta^{(MH)}\|_{\ell^2}^2=\Gamma_{\min}\|\Delta^{(MH)}\|_{\ell^2}^2,
\end{align}
where $\xi_\Delta(t,y):=\sum_{l\in[M],m\in[H]}\Delta_{l,m}\varphi_l(y)\chi_m(t)$. The second inequality holds for $T\geq 2b$. In the final inequality, we use that $\chi_m$ is a.e.\ positive for all $m\in\mathbb N$.

Next, for $\varsigma>0$ we can bound $B_2\leq\varsigma B_1+\varsigma^{-1}B_3$, since for any $x,y\in\mathbb R$, $|2xy|\leq\varsigma x^2+\varsigma^{-1}y^2$. This implies that 
\begin{equation}\label{before B3 bound}
\Delta^\top\mathbb E[G]\Delta\geq(1-\varsigma)\Delta_0^2+(1-\varsigma^{-1})B_3+\Gamma_{\min}\|\Delta^{(MH)}\|_{\ell^2}^2.
\end{equation}
To bound (\ref{before B3 bound}), we find an upper bound on $|B_3|$. 
Write $B_3$ as
\begin{equation}\label{B3 1}B_3=\frac1{T}\int_{(0,T)}\left(\int_{((t-b)\vee0,t)\times\mathscr X}\xi_{\Delta}(t-s,y)\bar\lambda^{(1)}(s,y)\, \mathrm ds\, \mathrm dy\right)^2\mathrm dt.\end{equation}
Let $I_t:=\int_{((t-b)\vee0,t)\times\mathscr X}\bar\lambda^{(1)}(s,y)\,\mathrm dy<\infty$, so that $\bar\lambda^{(1)}(s,y)/I_t$ becomes a probability measure on $(0,t)\times\mathscr X$. Note that by coupling the process starting in the empty history to the process $N_{\mathrm{stat}}$ starting in stationarity $$\bar\lambda^{(1)}(s,y)=\mathbb E\left|\mathrm dN(s,y)\right|/(\mathrm ds\, \mathrm dy)\leq\mathbb E\left|\mathrm dN_{\mathrm{stat}}(s,y)\right|/(\mathrm ds\, \mathrm dy)=\bar\lambda(y).$$ Therefore, we can bound $\sup_{t\in[0,T]}I_t\leq b\|\bar\lambda\|_{L^\infty(\mathscr X)}\mu(\mathscr X)$, where $\|\bar\lambda\|_{L^\infty(\mathscr X)}<\infty$ by Lemma \ref{step3Alemma} from Appendix~\hyperref[app A]{A}, while $\mu(\mathscr X)<\infty$ by assumption. 
We apply Jensen's inequality to the integrand of (\ref{B3 1}) to obtain 
\begin{align}
&\left(\int_{(0,t)\times\mathscr X}\xi_\Delta(t-s,y)\bar\lambda^{(1)}(s,y)\, \mathrm ds\, \mathrm dy\right)^2
\leq\int_{(0,t)\times\mathscr X}(\xi_\Delta(t-s,y))^2\bar\lambda^{(1)}(s,y)/I_t\, \mathrm ds\, \mathrm dy\, I_t^2\nonumber\\
&\leq\int_{(0,t)\times\mathscr X}(\xi_\Delta(t-s,y))^2\, \mathrm ds\, \mathrm dy\, \|\bar\lambda\|_{L^\infty(\mathscr X)}I_t
\leq \|\Delta^{(HM)}\|_{\ell^2}^2\mathfrak h_2^2\|\bar\lambda\|_{L^\infty(\mathscr X)}^2\mu(\mathscr X),\label{B3 2}
\end{align}
where we use Cauchy-Schwarz, orthonormality of $(\varphi_l)_{l\in[M]}$, and orthogonality of $(\chi_m)_{m\in\mathbb N}$.
Now define $C:=\mathfrak h_2^2\|\bar\lambda\|_{L^\infty(\mathscr X)}^2\mu(\mathscr X)$. This allows us to write $B_3\leq C\|\Delta^{(MH)}\|_{\ell^2}^2$. Hence, for $\varsigma\in(0,1)$, so for $(1-\varsigma^{-1})<0$, this gives 
\begin{equation}\label{after B3 bound}
\Delta^\top\mathbb E[G]\Delta\geq(1-\varsigma)\Delta_0^2+(1-\varsigma^{-1})C\|\Delta^{(MH)}\|_{\ell^2}^2+\Gamma_{\min}\|\Delta^{(MH)}\|_{\ell^2}^2.
\end{equation}
In particular, for $\varsigma:=\frac{2C}{2C+\Gamma_{\min}}$, the bound (\ref{after B3 bound}) reads 
\begin{equation}\label{after B3 final bound}
\Delta^\top\mathbb E[G]\Delta\geq\frac{\Gamma_{\min}}{2C+\Gamma_{\min}}\Delta_0^2-\frac{\Gamma_{\min}}{2C}C\|\Delta^{(M)}\|_{\ell^2}^2+\Gamma_{\min}\|\Delta^{(M)}\|_{\ell^2}^2\geq2\Sigma_{\min}\|\Delta\|_{\ell^2}^2,
\end{equation}
where $\Sigma_{\min}:=\frac12\min\{\frac{\Gamma_{\min}}{2C+\Gamma_{\min}},\frac{\Gamma_{\min}}2\}>0$.

\emph{Step 2}. We proceed by identifying an upper bound on $\Delta^\top(G-\mathbb E[G])\Delta$. Since $G_{0,0}$ is deterministic, we expand 
\begin{equation}\label{diff G E[G]}
\Delta^\top(G-\mathbb E[G])\Delta=(\Delta^{(MH)})^\top(G^{(MH)}-\mathbb E[G^{(MH)}])\Delta^{(MH)}+2\Delta_0\sum_{\substack{l\in[M]\\m\in[H]}}(G_{0,(l,m)}-\mathbb E[G_{0,(l,m)}])\Delta_{l,m}.
\end{equation}
We first bound the contribution of the first term of (\ref{diff G E[G]}). Let $\otimes$ denote the tensor product. Next, let $\varphi^{(M)}(y)=(\varphi_1(y),\ldots,\varphi_M(y))^\top$ and $\chi^{(H)}(t)=(\chi_1(t),\ldots,\chi_H(t))^\top$, and define
\begin{align}
f_1(s,y)&:=(\Delta^{(MH)})^\top\left(\left(\int_s^T\chi^{(H)}(t-s)(\chi^{(H)}(t-s))^\top\,\mathrm dt\right)\otimes\left(\varphi^{(M)}(y)\left(\varphi^{(M)}(y)\right)^\top\right)\right) \Delta^{(MH)};  \\
f_2(s,s',y,y')&:=(\Delta^{(MH)})^\top\bigg(\left(\int_{\max\{0,s'-s\}}^{b+\min\{0,s'-s\}}h(t)^{(H)}(\chi^{(H)}(t+s-s'))^\top\,\mathrm dt\right)\nonumber\\
&\otimes\left(\varphi^{(M)}(y)\left(\varphi^{(M)}(y)\right)^\top \right)\bigg) \,\Delta^{(MH)}.
\end{align}
By a change of variables and using  Theorem \ref{concentration ineq},
\begin{align}
&(\Delta^{(MH)})^\top(G^{(MH)}-\mathbb E[G^{(MH)}])\Delta^{(MH)}\nonumber\\
&=\frac1T\int_{(0,T)\times\mathscr X}f_1(s,y)\left\{N(\mathrm ds,\mathrm dy)-\mathbb E[N(\mathrm ds,\mathrm dy)]\right\}\nonumber\\
&+\frac1T\int_{(0,T)\times\mathscr X}\int_{(0,T)\times\mathscr X}f_2(s,s',y,y')\mathbf1\{s\neq s'\}\left\{N(\mathrm ds,\mathrm dy)N(\mathrm ds',\mathrm dy')-\mathbb E[N(\mathrm ds,\mathrm dy)N(\mathrm ds',\mathrm dy')]\right\}\nonumber\\
&\leq \|f_1\|_{L^\infty}c_{1}T^{-2/5}+\|f_2\|_{L^\infty}c_{2}T^{-2/5},\label{T -2/5 bound QF}
\end{align}
with probability at least $1- q_1T\exp(-q_2T^{1/5})-q_3T\exp(-q_4T^{1/5})$, where the constants $c_i,q_j>0$ come from Theorem \ref{concentration ineq}. By further bounding $\|f_i\|_{L^\infty}\leq \mathfrak h_2^2\|\Delta^{(MH)}\|_{\ell^2}^2\max_{i\in[M]}\|\varphi_i\|_{L^\infty(\mathscr X)}^2$ we conclude that we can find a constant $c_3$ such that with probability at least $1- q_1T\exp(-q_2T^{1/5})-q_3T\exp(-q_4T^{1/5})$
\begin{equation} (\Delta^{(MH)})^\top (G^{(MH)}-\mathbb E[G^{(MH)}])\Delta^{(MH)}\leq c_3MT^{-2/5}\|\Delta^{(MH)}\|_{\ell^2}^2=o(\|\Delta\|_{\ell^2}^2),\label{bound T1/3}\end{equation} 
using $M=o(T^{2/5})$.  Here we employ Assumption \ref{min eigenvalue ass}, which guarantees  that we can find a constant $c_4$ such that $\max_{l\in[M]}\|\varphi_l\|_{L^\infty(\mathscr X)}^2\leq c_4 M$.

We now bound the contribution of the second term of (\ref{diff G E[G]}). Let \begin{equation}f_3(s,y):=2\Delta_0\sum_{\substack{l\in[M]\\m\in[H]}}\varphi_l(y)\int_{(s,T)}\chi_m(t-s)\,\mathrm dt\,\Delta_{l,m}.\end{equation} Again, by a change of variables and using  Theorem \ref{concentration ineq}, with probability at least
$1- q_5T\exp(-q_6T^{1/5})$,
\begin{align}
2\Delta_0\sum_{\substack{l\in[M]\\m\in[H]}}(G_{0,(l,m)}-\mathbb E[G_{0,(l,m)}])\Delta_{l,m}&=\frac1T\int_{(0,T)\times\mathscr X}f_3(s,y)\left\{N(\mathrm ds,\mathrm dy)-\mathbb E[N(\mathrm ds,\mathrm dy)]\right\}\nonumber\\
&\leq \|f_3\|_{L^\infty}c_5T^{-2/5},
\end{align}
where, using $2xy\leq x^2+y^2$ and setting $c_6:=1+c_4\mathfrak h_1^2$,
\begin{align}
\|f_3\|_{L^\infty}&\leq\sum_{\substack{l\in[M]\\m\in[H]}}\left(\Delta_0^2+\Delta_{l,m}^2\varphi_l(y)^2\left(\int_{(0,T)}\chi_m(t-s)\, \mathrm ds\right)^2\right)\nonumber\\
&\leq MH\Delta_0^2+\max_{l\in[M]}\|\varphi_l\|_{L^\infty(\mathscr X)}^2\mathfrak h_1^2\|\Delta^{(MH)}\|_{\ell^2}^2\leq c_6MH\|\Delta\|_{\ell^2}^2.
\end{align}
Using $MH=o(T^{2/5})$, we can find an event of probability at least $1- q_5T\exp(-q_6T^{1/5})$ such that \begin{equation}2\Delta_0\sum_{\substack{l\in[M]\\m\in[H]}}(G_{0,(l,m)}-\mathbb E[G_{0,(l,m)}])\Delta_{l,m}=o(\|\Delta\|_{\ell^2}^2).\label{bound T1/3 2}\end{equation}

Combining (\ref{diff G E[G]}), (\ref{bound T1/3}) and (\ref{bound T1/3 2}), we can find an event of probability at least 
$$
1- q_1T\exp(-q_2T^{1/5})-q_3T\exp(-q_4T^{1/5})-q_5T\exp(-q_6T^{1/5})
$$
such 
 that $\Delta^\top(G-\mathbb E[G])\Delta\leq\Sigma_{\min}\|\Delta\|_{\ell^2}^2$. For such $T$, it holds that 
\begin{equation}
\Delta^\top G\Delta \geq \Delta^\top \mathbb E[G]\Delta  - \Delta^\top(G-\mathbb E[G])\Delta\geq \Sigma_{\min}\|\Delta\|_{\ell^2}^2.
\end{equation}
Now the result follows from the Rayleigh quotient characterization of the minimal eigenvalue.
\end{proof}

For later reference, we summarize the result of Equations  (\ref{T -2/5 bound QF})--(\ref{bound T1/3}) in a lemma.

\begin{lemma}\label{119120 lemma}
Grant Assumption \ref{stability ass}-\ref{ass PSD C0}. Suppose that $MH=o(T^{2/5})$. Consider the random Gram matrix $G$ defined in (\ref{G random matrix}). For any $\Delta\in\mathbb R^{1+MH}$, write $\Delta=[\Delta_0\ \Delta^{(MH)}]^\top$, where $\Delta_0\in\mathbb R,\Delta^{(MH)}\in\mathbb R^{MH}$. Then it holds that  \begin{equation} (\Delta^{(MH)})^\top (G^{(MH)}-\mathbb E[G^{(MH)}])\Delta^{(MH)}\leq c_3MT^{-2/5}\|\Delta^{(MH)}\|_{\ell^2}^2=o(\|\Delta\|_{\ell^2}^2).\label{bound T1/3 B}\end{equation} 
\end{lemma}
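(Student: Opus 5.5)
This lemma only restates the bound obtained in Step 2 of the proof of Lemma~\ref{lemma min eigenvalue}, so the plan follows that step directly. Fix $\Delta=[\Delta_0\ \Delta^{(MH)}]^\top$. Since $G^{(MH)}$ is the lower-right $MH\times MH$ block of $G$, and $G_{0,0}$ and $G_{0,(l,m)}$ play no role, we work only with the quadratic form $(\Delta^{(MH)})^\top G^{(MH)}\Delta^{(MH)}=\frac1T\int_{(0,T)}\xi_\Delta(t)^2\,\mathrm dt$. Here $\xi_\Delta(t):=\sum_{l,m}\Delta_{l,m}\Psi_{l,m}(t)$, which is a double stochastic integral against $N(\mathrm ds,\mathrm dy)N(\mathrm ds',\mathrm dy')$.

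The first step splits the double integral into its diagonal part $\{(s,y)=(s',y')\}$ and its off-diagonal part, using Fubini to move the $\mathrm dt$ integration inside. The diagonal part is a first-order functional $\frac1T\int f_1(s,y)\,N(\mathrm ds,\mathrm dy)$, with $f_1$ as defined in the proof of Lemma~\ref{lemma min eigenvalue}. The off-diagonal part is a second-order functional $\frac1T\iint f_2\,\mathbf 1\{s\neq s'\}\,N N$. The kernel $f_2$ depends on $(s,s')$ only through $s'-s$, and it vanishes when $|s-s'|>b$ because every $\chi_m$ is supported in $[0,b]$. So $f_2$ has bounded support in its temporal variable, as Theorem~\ref{concentration ineq} requires.

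The second step is to bound the sup norms. For $f_1$, Cauchy–Schwarz gives $|\xi|^2\le\|\Delta^{(MH)}\|^2_{\ell^2}\sum_{l,m}\varphi_l(y)^2(\cdot)$. Orthogonality of the $\chi_m$, with $\int\chi_m^2\le\mathfrak h_2^2$, controls the temporal factor, and Assumption~\ref{min eigenvalue ass}(i) gives $\max_l\|\varphi_l\|^2_{L^\infty}\le cM$. Together these yield $\|f_i\|_{L^\infty}\le c\,\mathfrak h_2^2 M\|\Delta^{(MH)}\|_{\ell^2}^2$ for $i=1,2$. The third step applies Theorem~\ref{concentration ineq} to $f_1/\|f_1\|_{L^\infty}$ and $f_2/\|f_2\|_{L^\infty}$ and rescales. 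On an event of probability at least $1-q_1T\exp(-q_2T^{1/5})-q_3T\exp(-q_4T^{1/5})$, this gives the deviation bound $c_3MT^{-2/5}\|\Delta^{(MH)}\|^2_{\ell^2}$. Finally, $MH=o(T^{2/5})$ gives $M=o(T^{2/5})$, so the bound is $o(\|\Delta^{(MH)}\|^2_{\ell^2})$ and hence $o(\|\Delta\|^2_{\ell^2})$.

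The main obstacle is uniformity in $\Delta$. Theorem~\ref{concentration ineq} is stated for fixed bounded functions, while the lemma is written "for any $\Delta$". After normalizing by $\|\Delta^{(MH)}\|^2_{\ell^2}$, the first thing to try is to apply the theorem entrywise to $G^{(MH)}-\mathbb E G^{(MH)}$, which needs only $(MH)^2$ events, and then take a union bound. The cost is an extra factor $(MH)^2$ in front of $T\exp(-q_2T^{1/5})$, which is absorbed by polynomial growth. This entrywise route controls the operator norm only up to a factor $MH$, however. To keep the rate $M$, I would instead use an $\epsilon$-net of the unit sphere in $\mathbb R^{MH}$, of size $e^{cMH}$, which is still dominated by $\exp(-q_2T^{1/5})$ since $MH=o(T^{2/5})$ is not enough by itself. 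If that balance fails, I would state the bound for each fixed $\Delta$, which is how it is used later.
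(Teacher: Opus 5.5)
Your main line is exactly Step~2 of the proof of Lemma~\ref{lemma min eigenvalue}, which is all the paper gives for this lemma: you split $\frac1T\int_0^T\xi_\Delta(t)^2\,\mathrm dt$ into a diagonal first-order functional with kernel $f_1$ and an off-diagonal second-order functional with kernel $f_2$ of temporal range $b$, bound the sup norms, and apply Theorem~\ref{concentration ineq} to the normalized kernels.

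\textbf{The sup-norm bound.} This is the step that fails. Your Cauchy--Schwarz inequality leaves the factor $\sum_{l\le M}\varphi_l(y)^2$. Assumption~\ref{min eigenvalue ass}(i) only gives $\sum_{l\le M}\varphi_l(y)^2\le M\cdot cM=cM^2$, not $cM$. Applying Cauchy--Schwarz jointly in $(l,m)$ also costs a factor $H$. You avoid this by integrating in $t$ first, using orthogonality of the $\chi_m$ for $f_1$ and Cauchy--Schwarz in $t$ for $f_2$. That yields $\|f_i\|_{L^\infty}\le\mathfrak h_2^2K_M\|\Delta^{(MH)}\|_{\ell^2}^2$ with $K_M:=\sup_y\sum_{l\le M}\varphi_l(y)^2$.

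The loss relative to $M$ is genuine. For the Legendre basis, $\varphi_{n+1}(1)=\sqrt{2n+1}$, so $K_M=M^2$ while $\max_{l\le M}\|\varphi_l\|_{L^\infty}^2=2M-1$. Take $\Delta_{l,1}=\varphi_l(1)/M$ and all other entries zero. Then $\|\Delta^{(MH)}\|_{\ell^2}=1$, but $f_1(s,1)=\|\chi_1\|_{L^2}^2M^2$ for $s\le T-b$.

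The paper asserts $\|f_i\|_{L^\infty}\le\mathfrak h_2^2\|\Delta^{(MH)}\|_{\ell^2}^2\max_l\|\varphi_l\|_{L^\infty}^2$ without justification, so this gap is already in its proof. As written, though, your argument only gives $cM^2T^{-2/5}\|\Delta^{(MH)}\|_{\ell^2}^2$. The $o(\|\Delta\|_{\ell^2}^2)$ conclusion then needs $M^2=o(T^{2/5})$, which $MH=o(T^{2/5})$ does not imply. Getting the factor $M$ requires the extra condition $K_M\le cM$, which holds for Haar wavelets but not for Legendre polynomials.

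\textbf{Uniformity in $\Delta$.} You are right that the paper passes over this. The kernels fed into Theorem~\ref{concentration ineq} depend on $\Delta$, so both arguments only give, for each fixed $\Delta$, an exceptional event that itself depends on $\Delta$.

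Your $\epsilon$-net sentence contradicts itself. A union bound over $e^{cMH}$ net points against $T\exp(-q_2T^{1/5})$ requires $MH=o(T^{1/5})$, which $MH=o(T^{2/5})$ does not provide. Under $MH=o(T^{1/5})$ the net does work: since $M\le MH$, also $M^2=o(T^{2/5})$. So on a single event you get $\|G^{(MH)}-\mathbb E[G^{(MH)}]\|_{\mathrm{op}}\le cK_MT^{-2/5}\le cM^2T^{-2/5}=o(1)$, which is what Lemma~\ref{lemma min eigenvalue} actually needs.

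Your fallback is not ``how it is used later''. Lemma~\ref{lemma min eigenvalue} plugs the bound into a Rayleigh-quotient minimization over all $\Delta$, so it needs one event valid for every $\Delta$. Step~3B of the proof of Theorem~\ref{recovery thm} applies it to the data-dependent vector $e_l^{(M)}\otimes\hat\gamma$.

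For that second use, your entrywise union bound is the right tool and keeps the factor $M$ even for Legendre. Set $D:=G^{(MH)}-\mathbb E[G^{(MH)}]$. Each entry $D_{(l,m),(l,m')}$ has kernels bounded by $\mathfrak h_2^2\|\varphi_l\|_{L^\infty}^2\le c\,\mathfrak h_2^2M$. Since $\|\hat\gamma\|_{\ell^1}=1$, you get $|(e_l^{(M)}\otimes\hat\gamma)^\top D(e_l^{(M)}\otimes\hat\gamma)|\le\max_{m,m'}|D_{(l,m),(l,m')}|$, with only polynomially many events in the union.

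For the first use, neither your proposal nor the paper establishes the required uniformity under $MH=o(T^{2/5})$ alone.
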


\section*{Appendix B: Relegated proofs of Sections~\ref{setup section}--\ref{sparsistency section}}\label{app B}

{\it Proof of Lemma~\ref{integrated diff and SDP}}.
Set $\pi:=\beta\otimes\gamma$, so $\Delta\pi:=\Delta (\beta\otimes\gamma)=\tilde\beta\otimes\tilde\gamma-\hat\beta\otimes\hat\gamma$.  Then $\Delta\theta^\otimes_k=(\Delta\alpha_k,\Delta\pi_{k,\cdot,\cdot})$, where $\pi_{k,l,m}:=\beta_{k,l}\gamma_m$.
We have 
\begin{align}
\lambda_{\tilde \theta}(t,x)-\lambda_{\hat \theta}(t,x)&=\sum_{k\in[M]}\Delta\alpha_k\varphi_k(x)+\int_{(0,t)\times\mathscr X}\sum_{\substack{k,l\in[M]\\m\in[H]}}\Delta \pi_{k,l,m}\varphi_k(x)\varphi_l(y)\chi_m(t-s)\,N(\mathrm ds,\mathrm dy)\nonumber\\
&=\sum_{k\in[M]}\varphi_k(x)\left(\Delta\alpha_k+\int_{(0,t)\times\mathscr X}\sum_{\substack{l\in[M]\\m\in[H]}}\Delta \pi_{k,l,m}\varphi_l(y)\chi_m(t-s)\,N(\mathrm ds,\mathrm dy)\right).
\end{align}
Since the sequence $(\varphi_k)_{k\in\mathbb N}$ is orthonormal, we have
\begin{align}
&\frac1{T}\int_{(0,T)\times\mathscr X}(\lambda_{\tilde \theta}(t,x)-\lambda_{\hat \theta}(t,x))^2\, \mathrm dt\,\mathrm dx\nonumber\\
&=\sum_{k\in[M]}\frac1T\int_{(0,T)}\left(\Delta\alpha_k+\int_{(0,t)\times\mathscr X}\sum_{\substack{l\in[M]\\m\in[H]}}\Delta \pi_{k,l,m}\varphi_l(y)\chi_m(t-s)\,N(\mathrm ds,\mathrm dy)\right)^2\mathrm dt=\sum_{k\in[M]}\left(\Delta \theta_k^\otimes\right)^\top G\Delta\theta^\otimes_k.\label{semidefinite product}
\end{align}
 By the Rayleigh quotient characterization of the minimal eigenvalue,
\begin{equation}
\sum_{k\in[M]}\left(\Delta \theta_k^\otimes\right)^\top G\Delta\theta^\otimes_k\geq\sum_{k\in[M]}\sigma_{\min}(G)\|\Delta\theta_k^\otimes\|_2^2\geq\sigma_{\min}(G)\|\Delta\theta^\otimes\|_2^2\geq\sigma_{\min}(G)\frac{\|\Delta\theta^\otimes\|_{\ell^1}^2}{\mathscr S},\label{Frob norm bound}
\end{equation}
which finishes the proof.\qed\newline

{\it Proof of Theorem~\ref{consistency thm}}.
The proof proceeds by comparing the estimator $\hat\theta$ with the approximating parameter $\tilde\theta$ from Assumption \ref{ass approx} through their penalized least-squares criteria. In Step 1, we decompose the resulting excess risk into an approximation term, a martingale term, and a penalty term. In Step 2, the martingale term is controlled via a truncation argument: on a high-probability event, local event counts are logarithmically bounded, which yields uniform bounds on the jumps and predictable quadratic variation of the associated martingale; an exponential martingale inequality then gives the required deviation bound. In Step 3, the penalty term is controlled through the empirical Gram matrix and its minimum eigenvalue. Combining these bounds reduces the analysis to a deterministic comparison inequality, which we resolve in Step 4 by considering the three possible relative magnitudes of the estimation, approximation, and estimator--approximant errors. In each case, this yields the stated rate.

\emph{Step 1: Comparing least-squares estimator to ``good'' approximating parameter}. 
Since $\hat \theta$ is a minimizer of the optimization problem in (\ref{lasso LS contrast}), and since $\tilde \theta$ is eligible as well, it holds that 
\begin{align}
&\frac1T\left(-\int_{(0,T)\times\mathscr X}\lambda_{\hat \theta}(t,x)\,N(\mathrm dt,\mathrm dx)+\frac12\int_{(0,T)\times\mathscr X}\lambda_{\hat \theta}(t,x)^2\, \mathrm dt\, \mathrm dx\right)+\eta\|\hat \beta\otimes\hat\gamma\|_{\ell^1}\nonumber\\
&\leq\frac1T\left(-\int_{(0,T)\times\mathscr X}\lambda_{\tilde \theta}(t,x)\,N(\mathrm dt,\mathrm dx)+\frac12\int_{(0,T)\times\mathscr X}\lambda_{\tilde \theta}(t,x)^2\, \mathrm dt\, \mathrm dx\right)+\eta\|\tilde \beta\otimes\tilde\gamma\|_{\ell^1}.\label{optimal solution ineq}
\end{align}
Adding 
$$
\frac1{2T}\int_{(0,T)\times\mathscr X}\lambda(t,x)^2\, \mathrm dt\, \mathrm dx-\frac1T\int_{(0,T)\times\mathscr X}\lambda(t,x)\lambda_{\hat \theta}(t,x)\, \mathrm dt\, \mathrm dx
$$
to both sides of (\ref{optimal solution ineq}) and rewriting gives us, for any minimizer $\hat\theta$ of \eqref{lasso LS contrast},
\begin{align}
\frac1{2T}\int_{(0,T)\times\mathscr X}(\lambda_{\hat \theta}(t,x)-\lambda(t,x))^2\, \mathrm dt\, \mathrm dx
&\leq I_1+I_2+I_3,\label{I123eq}\\
\end{align}
where
\begin{align}
I_1
&:=\frac1{2T}\int_{(0,T)\times\mathscr X}(\lambda_{\tilde \theta}(t,x)-\lambda(t,x))^2\, \mathrm dt\, \mathrm dx,\\
I_2
&:=\frac1T\int_{(0,T)\times\mathscr X}(\lambda_{\tilde \theta}(t,x)-\lambda_{\hat \theta}(t,x))
\bigl(\lambda(t,x)\, \mathrm dt\, \mathrm dx-N(\mathrm dt,\mathrm dx)\bigr),\\
I_3
&:=\eta\bigl(\|\tilde \beta\otimes\tilde\gamma\|_{\ell^1}-\|\hat \beta\otimes\hat\gamma\|_{\ell^1}\bigr).
\end{align}

In this proof, we can immediately bound $I_1$, using Assumption  \ref{ass approx}. In Steps 2 and 3, we bound both $I_2$ and $I_3$ in terms of $$\frac1{T}\int_{(0,T)\times\mathscr X}(\lambda_{\tilde \theta}(t,x)-\lambda_{\hat \theta}(t,x))^2\,\mathrm dt\,\mathrm dx,$$ Next, in Step 4, we rewrite the resulting bound in three (partially overlapping) cases. Case (i) specified in Step 4 allows for a clever way of rewriting \eqref{I123eq}. Case (ii) makes the bounding straightforward, by Assumption  \ref{ass approx}. In Case (iii) we derive an inequality that can be treated similarly as (\ref{I123eq}) in Case (i). 

\emph{Step 2: Probabilistic bound for $I_2$}. 
Consider the unbounded martingale
\begin{equation}\label{martingale Y_T}
Y_T := \int_{(0,T)\times\mathscr X} (\lambda_{\hat \theta}(t,x) - \lambda_{\tilde \theta}(t,x))(N(\mathrm dt,\mathrm dx) - \lambda(t,x)\, \mathrm dt\, \mathrm dx).
\end{equation}
Its predictable quadratic variation satisfies 
\begin{equation}\label{martingale Y_T QV}
\langle Y,Y \rangle_T= \int_{(0,T)\times\mathscr X} (\lambda_{\hat \theta}(t,x) - \lambda_{\tilde \theta}(t,x))^2 \lambda(t,x)\,  \mathrm dt\, \mathrm dx. 
\end{equation}
We use the martingale structure of $Y_T$ to bound it using an exponential inequality for \emph{bounded} martingales. Since $Y_T$ is unbounded, we apply a truncation argument.


\emph{Step 2A: Truncation}. 
For $s<u$ and $B>0$ consider the event
\begin{equation}\label{A event eq}
A_{s,u;B}:=\left\{\omega:\sup_{t\in[s,u]}N([t-b,t]\times\mathscr X)\leq B \right\},
\end{equation}
where $b$ is the constant from Assumption \ref{stability ass}(vi).

We will apply  \cite{vandeGeer1995}, Lemma 2.1, which says that for a martingale $(X_t)_{t\geq0}$ having predictable quadratic variation $\langle X,X\rangle_t$ that if $|\Delta X_t|\leq K$ for all $t>0$, for some $K\in[0,\infty)$, then for each $a,b>0$ it holds that
\begin{equation}\label{lemma2.1 ineq}
\mathbb P(X_T\geq a,\langle X,X\rangle_T\leq b^2)\leq\exp\left(-\frac{a^2}{2(aK+b^2)}\right).
\end{equation}
We construct a martingale from $Y_t$ that satisfies the conditions of \cite{vandeGeer1995}, Lemma 2.1. To this end, consider the stopping time
\begin{equation}\label{stopping time}
\tau=\inf\left\{t\geq 0\ \Big|\ \sup_{u \in [0,t]} N\left([u-b,u]\times\mathscr X\right) > B\right\} \wedge T.
\end{equation}
Then, consider the stopped martingale $X_t=Y_{t\wedge\tau}$. 
Note that  $\omega \in A_{0,T;B}$ implies that $X_t=Y_t$ for all $t\in[0,T]$. We bound $I_2$ in probability by conditioning on $A_{0,T;B}$: for $L>0$,
\begin{align}
\mathbb P(I_2 > L)&=\mathbb P(Y_T> TL)\leq\mathbb P(Y_T> TL\ |\  A_{0,T;B})+\mathbb P( A_{0,T;B}^{\mathsf{c}})=\mathbb P(X_T> TL\ |\  A_{0,T;B})+\mathbb P( A_{0,T;B}^{\mathsf{c}}).\label{I_2 prob bound}
\end{align}

We first bound $\mathbb P( A_{0,T;B}^{\mathsf{c}})$
Note that by Assumption \ref{stability ass}(ii), (iii), (iv), $t\mapsto\int_{\mathscr X}\lambda(t,x)\mathrm dx$ can be bounded by
\begin{equation}\label{intensity uni Hawkes}
\lambda_{1D}(t)=\overline\nu+\sum_{t_i<t}\frac{\varrho_{T_{\mathrm{hom}}}}{\mu(\mathscr X)\|h\|_{L^1(\mathbb R_+)}}h(t-t_i),
\end{equation}
which corresponds to the intensity of a spatiotemporal Hawkes process $N_{1D}$ with i.i.d.\ uniform spatial coordinates. By a straightforward coupling using the standard Picard iteration construction of the graphon Hawkes process (cf. the proof of Lemma \ref{couplingdeviationslemma}), $$N((s,t]\times\mathscr X)\leq N_{1D}((s,t]\times\mathscr X).$$ Since $N_{1D}$ has i.i.d. uniform spatial coordinates, it can be treated as a univariate Hawkes process $\tilde N_{1D}$ with intensity 
\begin{equation}\label{intensity uni Hawkes1d}
\tilde\lambda_{1D}(t)=\overline\nu\mu(\mathscr X)+\sum_{t_i<t}\frac{\varrho_{T_{\mathrm{hom}}}}{\|h\|_{L^1(\mathbb R_+)}}h(t-t_i),
\end{equation} where to each particle, we attach a uniform coordinate in $\mathscr X$. 
By the existence of exponential moments for non-Markovian Hawkes processes \cite{Leblanc2025b} and by the Chernoff bound, we can find $q_1,q_2>0$ such that
\begin{align}
&\mathbb P\left(\sup_{t\in[0,T]}N((t-b,t]\times \mathscr X)>B\right)\leq \sum_{i=0}^{\lceil T/b\rceil}\mathbb P\left(N((i-1)b,ib]\times\mathscr X)\geq B/2\right)\nonumber\\
&\leq\sum_{i=0}^{\lceil T/b\rceil}\mathbb P\left(\tilde N_{1D}((i-1)b,ib])\geq B/2\right)
\label{LDP uni Hawkes}\leq\frac{2T}{b}q_1\exp(-q_2 B). 
\end{align}
From now on, we set $B:=\frac2{q_2}\log(T)$. It follows that 
\begin{equation}\label{P3 bound}
\mathbb P(A_{0,T;B}^{\mathsf{c}})\leq \frac{2q_1}bT\exp(-2\log(T))=\frac{2q_1}{bT}.
\end{equation}

\emph{Step 2B: Jumps of the martingale}. 
We move on to the first term in the right-hand side of (\ref{I_2 prob bound}).
Note that $|\Delta X_t|\leq |\Delta Y_t|$ for all $t\geq0$. 
We bound $$|\Delta Y_t|\leq\sup_{t\in[0,T]}\sup_{x\in\mathscr X}|\lambda_{\hat \theta}(t,x) - \lambda_{\tilde \theta}(t,x)|.$$ 
Denote the difference in coefficients by $\Delta\alpha=\tilde\alpha-\hat\alpha$ and $\Delta (\beta\otimes\gamma)=\tilde\beta\otimes\tilde\gamma-\hat\beta\otimes\hat\gamma$.
Using Assumption \ref{stability ass}(vi), we know that $\sup_{x\in\mathscr X}\left|\lambda_{\tilde \theta}(t,x)-\lambda_{\hat \theta}(t,x)\right|$ is only influenced by events for $N$ that occur in the time interval $[t-b,t]$. Furthermore, using Assumption  \ref{min eigenvalue ass}, for each $t$, the contribution to $\sup_{x\in\mathscr X}\left|\lambda_{\tilde \theta}(t,x)-\lambda_{\hat \theta}(t,x)\right|$ of each of those events can be bounded by $c_1\mathfrak h_\infty\|\Delta (\beta\otimes\gamma)\|_{\ell^1} M$.
On $A_{0,T;B}$, we know that the number of events in $[t-b,t]$ is bounded by $B$. Therefore, on $A_{0,T;B}$,  
\begin{equation}|\Delta X_t|\leq\sqrt {c_1}\|\Delta\alpha\|_{\ell^1}\sqrt M+ c_1\mathfrak h_\infty\|\Delta (\beta\otimes\gamma)\|_{\ell^1}MB\leq c_2(\|\Delta\alpha\|_{\ell^1}+\|\Delta (\beta\otimes\gamma)\|_{\ell^1})MB,
\end{equation}
where $c_2:=\max\{\sqrt{c_1},c_1\mathfrak h_\infty\}$. 
By Lemmas \ref{integrated diff and SDP} and  \ref{lemma min eigenvalue} from Appendix~\hyperref[app A]{A},  it holds with probability at least $1-q_3T\exp(-q_4T^{1/5})$ that
\begin{align}
|\Delta X_t|&\leq c_2MB\left(\frac{\mathscr S}{\sigma_{\min}(G)}\frac1{T}\int_{(0,T)\times\mathscr X}(\lambda_{\tilde \theta}(t,x)-\lambda_{\hat \theta}(t,x))^2\, \mathrm dt\, \mathrm dx\right)^{1/2}\nonumber\\
&\leq c_3M\sqrt{\mathscr S}\,\frac{\log(T)}{\sqrt T}\left(\int_{(0,T)\times\mathscr X} (\lambda_{\hat \theta}(t,x) - \lambda_{\tilde \theta}(t,x))^2 \, \mathrm dt\, \mathrm dx\right)^{1/2},
\end{align}
 where $\mathscr S\leq2 M^2H$ is the number of nonzero elements of $\Delta \alpha$ and $\Delta (\beta\otimes\gamma)$, where $c_3:=2\sqrt 2c_2/(q_2\sqrt{\Sigma_{\min}})$, and where $\Sigma_{\min}$ is specified in  Lemma \ref{lemma min eigenvalue} from Appendix~\hyperref[app A]{A}.
 
\emph{Step 2C: Quadratic variation of the martingale}. 
Next, we bound $\langle X,X\rangle_T$ on $A_{0,T;B}$. 
To this end, we bound $\sup_{t\in[0,T]}\sup_{x\in\mathscr X}\lambda(t,x)$ for $\omega\in A_{0,T;B}$. By Assumption \ref{stability ass}(vi), we know that $\lambda(t,x)$ is only influenced by events in $[t-b,t]$. On $A_{0,T;B}$, we know that the number of events in $[t-b,t]$ is bounded by $B$. Letting $c_4:=\overline\nu+C_W\|h\|_{L^\infty(\mathbb R_+)}<\infty$ by Assumption \ref{stability ass}, we infer that $\sup_{t\in[0,T]}\sup_{x\in\mathscr X}\lambda(t,x)\leq c_4 B$. Therefore, on $A_{0,T;B}$
\begin{equation}\label{quadratic var bound}
\langle X,X\rangle_T\leq c_5 \log(T)\int_{(0,T)\times\mathscr X} (\lambda_{\hat \theta}(t,x) - \lambda_{\tilde \theta}(t,x))^2\, \mathrm dt\, \mathrm dx,
\end{equation}
where $ c_5=2c_4/q_2$.

\emph{Step 2D: Exponential inequality for the martingale}. 
In (\ref{I_2 prob bound}), set $$L:=2\sqrt{c_5}\frac{\log(T)}{T}\left(\int_{(0,T)\times\mathscr X} (\lambda_{\hat \theta}(t,x) - \lambda_{\tilde \theta}(t,x))^2 \, \mathrm dt\, \mathrm dx\right)^{1/2}.$$ Then by (\ref{lemma2.1 ineq}), we obtain
\begin{align}
\mathbb P(X_T> TL\ |\  A_{0,T;B})\leq\exp\left(-\frac{4c_5\log(T)^2}{2(2 c_3\sqrt{c_5}\sqrt{\mathscr S}M\frac{\log(T)}{\sqrt T}\log(T)+c_5 \log(T))}\right).
\end{align}
Using $\sqrt{\mathscr S} M\log(T)/\sqrt T=o(1)$, which is guaranteed since $\mathscr S\leq 2M^2H$ (being the number of nonzero entries of $\Delta\alpha$ and $\Delta (\beta\otimes\gamma)$; see (\ref{semidefinite product})--(\ref{Frob norm bound}) below), select $T$ sufficiently large such that $2 c_3\sqrt{\mathscr S}M\log(T)/\sqrt T\leq \sqrt{c_5}$. For such $T$, it holds that 
\begin{equation}\label{P1 bound}
\mathbb P(X_T> TL\ |\  A_{0,T;B})\leq\frac1T.
\end{equation}
Combining (\ref{I_2 prob bound}), (\ref{P3 bound}) and (\ref{P1 bound}), with $q_5:=\frac{2q_1}b+1$ and $c_6:=2\sqrt{c_5}$ it follows that with probability at least
$1-q_3T\exp(-q_4T^{1/5})-\frac{q_5}T$,
\begin{equation}\label{I2 final bound}
I_2\leq c_6\frac{\log(T)}{T}\left(\int_{(0,T)\times\mathscr X} (\lambda_{\hat \theta}(t,x) - \lambda_{\tilde \theta}(t,x))^2\,  \mathrm dt\, \mathrm dx\right)^{1/2}.
\end{equation}

\emph{Step 3: Probabilistic bound for $I_3$}. 
Next, we bound $I_3$ by bounding $\|\Delta (\beta\otimes\gamma)\|_{\ell^1}$ in terms of 
\begin{equation}\label{bound i3}
\frac1{T}\int_{(0,T)\times\mathscr X}(\lambda_{\tilde \theta}(t,x)-\lambda_{\hat \theta}(t,x))^2\mathrm dt\mathrm dx,
\end{equation}
We rewrite (\ref{bound i3}) in terms of a certain quadratic form, for which we need a bounded eigenvalue condition. 
By Lemma \ref{integrated diff and SDP} and using the reverse triangle inequality, we can bound $I_3$ by 
\begin{equation}\eta\left(\frac{\mathscr S}{\sigma_{\min}(G)}\frac1{T}\int_{(0,T)\times\mathscr X}(\lambda_{\tilde \theta}(t,x)-\lambda_{\hat \theta}(t,x))^2\, \mathrm dt\, \mathrm dx\right)^{1/2}.\label{eq I3 bound}
\end{equation}
To make use of this bound, we use Lemma \ref{lemma min eigenvalue} from Appendix~\hyperref[app A]{A}, giving a  probabilistic lower bound on $\sigma_{\min}(G)$. 
Note that in the bounding of $I_2$, we already applied Lemma \ref{lemma min eigenvalue} to condition on the event $\{\omega:\sigma_{\min}(G)\geq\Sigma_{\min}\}$, which has probability at least $1-q_3T\exp(-q_4T^{1/5})$.

\emph{Step 4: Combining the bounds}. 
By (\ref{I2 final bound}) and (\ref{eq I3 bound}), on an event of probability at least $1-q_3T\exp(-q_4T^{1/5})-\frac{q_5}T$, 
(\ref{I123eq}) reduces to 
\begin{align}
&\frac1{2T}\int_{(0,T)\times\mathscr X}(\lambda_{\hat \theta}(t,x)-\lambda(t,x))^2\, \mathrm dt\, \mathrm dx\nonumber\\
&\leq\frac1{2T}\int_{(0,T)\times\mathscr X}(\lambda_{\tilde \theta}(t,x)-\lambda(t,x))^2\, \mathrm dt\, \mathrm dx\nonumber\\
&+\left(c_6\frac{\log(T)}{\sqrt T}+\frac1{\sqrt{\Sigma_{\min}}}\eta\sqrt{\mathscr S}\right)\left(\frac1{T}\int_{(0,T)\times\mathscr X}(\lambda_{\tilde \theta}(t,x)-\lambda_{\hat \theta}(t,x))^2\, \mathrm dt\, \mathrm dx\right)^{1/2}.\label{after I3 bound}
\end{align}
Set $\eta:=c_7\sqrt{\Sigma_{\min}}\log(T)/\sqrt T$ for some constant $c_7>0$. Then, with $c_8=4(c_6+c_7)$, (\ref{after I3 bound}) can be rewritten to  
\begin{align}
&\frac1{T}\int_{(0,T)\times\mathscr X}(\lambda_{\hat \theta}(t,x)-\lambda(t,x))^2\, \mathrm dt\, \mathrm dx\nonumber\\
&\leq\frac1{T}\int_{(0,T)\times\mathscr X}(\lambda_{\tilde \theta}(t,x)-\lambda(t,x))^2\, \mathrm dt\, \mathrm dx+\frac{c_8}2\frac{\log(T)\sqrt {\mathscr S}}{\sqrt T}\left(\frac1{T}\int_{(0,T)\times\mathscr X}(\lambda_{\tilde \theta}(t,x)-\lambda_{\hat \theta}(t,x))^2\, \mathrm dt\, \mathrm dx\right)^{1/2}.\label{after I3 bound2}
\end{align}

We now consider three cases:
\begin{enumerate}[label=(\roman*)]
\item $\displaystyle\int_{(0,T)\times\mathscr X}(\lambda_{\hat \theta}(t,x)-\lambda(t,x))^2\, \mathrm dt\, \mathrm dx\geq\int_{(0,T)\times\mathscr X}(\lambda_{\tilde \theta}(t,x)-\lambda(t,x))^2\, \mathrm dt\, \mathrm dx\quad$ and \\$\displaystyle\int_{(0,T)\times\mathscr X}(\lambda_{\hat \theta}(t,x)-\lambda(t,x))^2\, \mathrm dt\, \mathrm dx\geq\int_{(0,T)\times\mathscr X}(\lambda_{\tilde \theta}(t,x)-\lambda_{\hat \theta}(t,x))^2\, \mathrm dt\, \mathrm dx$;
\item $\displaystyle\int_{(0,T)\times\mathscr X}(\lambda_{\hat \theta}(t,x)-\lambda(t,x))^2\, \mathrm dt\, \mathrm dx<\int_{(0,T)\times\mathscr X}(\lambda_{\tilde \theta}(t,x)-\lambda(t,x))^2\, \mathrm dt\, \mathrm dx$;
\item $\displaystyle\int_{(0,T)\times\mathscr X}(\lambda_{\hat \theta}(t,x)-\lambda(t,x))^2\, \mathrm dt\, \mathrm dx<\int_{(0,T)\times\mathscr X}(\lambda_{\tilde \theta}(t,x)-\lambda_{\hat \theta}(t,x))^2\, \mathrm dt\, \mathrm dx$.
\end{enumerate}
For those three cases, we will bound $\frac1{T}\int_{(0,T)\times\mathscr X}(\lambda_{\hat \theta}(t,x)-\lambda(t,x))^2\, \mathrm dt\, \mathrm dx$.

\emph{Step 4A: Case (i)}. 
We can rewrite (\ref{after I3 bound2}) to 
\begin{align}
&\frac1{2T}\int_{(0,T)\times\mathscr X}(\lambda_{\hat \theta}(t,x)-\lambda(t,x))^2\, \mathrm dt\, \mathrm dx\nonumber\\
&\leq\frac1{2T}\left(\int_{(0,T)\times\mathscr X}(\lambda_{\tilde \theta}(t,x)-\lambda(t,x))^2\, \mathrm dt\, \mathrm dx\int_{(0,T)\times\mathscr X}(\lambda_{\hat \theta}(t,x)-\lambda(t,x))^2\, \mathrm dt\, \mathrm dx\right)^{1/2}\nonumber\\
&+\frac{c_8}2\frac{\log(T)\sqrt{\mathscr S}}{\sqrt T}\left(\frac1{T}\int_{(0,T)\times\mathscr X}(\lambda_{\hat \theta}(t,x)-\lambda(t,x))^2\, \mathrm dt\, \mathrm dx\right)^{1/2}.
\end{align}
We divide both sides by $\left(\frac1{T}\int_{(0,T)\times\mathscr X}(\lambda_{\hat \theta}(t,x)-\lambda(t,x))^2\, \mathrm dt\, \mathrm dx\right)^{1/2}$, apply Assumption \ref{ass approx}, and square both sides of the inequality, using that $(x+y)^2\leq2x^2+2y^2$. This gives 
\begin{equation}
\frac1{T}\int_{(0,T)\times\mathscr X}(\lambda_{\hat \theta}(t,x)-\lambda(t,x))^2\, \mathrm dt\, \mathrm dx\leq 2c_9 M^{-2d}H^{-d}+2c_8^2\frac{\log(T)^2\mathscr S}{T}
\label{after I3 bound3}
\end{equation}
on an event of probability at least 
$1-q_3T\exp(-q_4T^{1/5})-\frac{q_5}T-q_6T\exp(-q_7T^{1/5})$, 
where the constants $q_6,q_7$ and $c_9$ come from Assumption \ref{ass approx}. 

\emph{Step 4B: Case (ii)}. With different constants, we directly have (\ref{after I3 bound3}) with probability at least $1-q_8T\exp(q_9T^{1/5})$, by Assumption \ref{ass approx}.

\emph{Step 4C: Case (iii)}. We start from (\ref{optimal solution ineq}), but now add $$
\frac1{2T}\int_{(0,T)\times\mathscr X}\lambda_{\tilde \theta}(t,x)^2\, \mathrm dt\, \mathrm dx-\frac1T\int_{(0,T)\times\mathscr X}\lambda_{\tilde \theta}(t,x)\lambda_{\hat \theta}(t,x)\, \mathrm dt\, \mathrm dx
$$ 
to both sides of the inequality, which gives us 
\begin{align}
\frac1{2T}\int_{(0,T)\times\mathscr X}(\lambda_{\hat \theta}(t,x)-\lambda_{\tilde \theta}(t,x))^2\, \mathrm dt\, \mathrm dx&\leq\underbrace{\frac1T\int_{(0,T)\times\mathscr X}(\lambda_{\tilde \theta}(t,x)-\lambda_{\hat \theta}(t,x))(\lambda_{\tilde \theta}(t,x)-\lambda(t,x))\, \mathrm dt\, \mathrm dx}_{=:J_1}\nonumber\\
&+\underbrace{\frac1{T}\int_{(0,T)\times\mathscr X}(\lambda_{\tilde \theta}(t,x)-\lambda_{\hat \theta}(t,x))(\lambda(t,x)\, \mathrm dt\, \mathrm dx-N(\mathrm dt,\mathrm dx))}_{=:J_2}\nonumber\\
&+\underbrace{\eta(\|\tilde \beta\otimes\tilde\gamma\|_{\ell^1}-\|\hat \beta\otimes\hat\gamma\|_{\ell^1})}_{=:J_3}.\label{I123eq2}
\end{align}
By Cauchy-Schwarz, $J_1$ can be bounded by 
$$
\left(\frac1T\int_{(0,T)\times\mathscr X}(\lambda_{\tilde \theta}(t,x)-\lambda_{\hat \theta}(t,x))^2\, \mathrm dt\, \mathrm dx\right)^{1/2}\left(\frac1T\int_{(0,T)\times\mathscr X}(\lambda_{\tilde \theta}(t,x)-\lambda(t,x))^2\, \mathrm dt\, \mathrm dx\right)^{1/2},
$$ 
after which we may bound the second factor using Assumption \ref{ass approx}. Next, note that $J_2=I_2$, for which we apply the same bounding as before. Furthermore, $\|\Delta (\beta\otimes\gamma)\|_{\ell^1}$ may be bounded as before. 
After applying those bounds, and after  dividing both sides of the remaining equation by \begin{equation}\label{int hat tilde}
\left(\frac1T\int_{(0,T)\times\mathscr X}(\lambda_{\tilde \theta}(t,x)-\lambda_{\hat \theta}(t,x))^2\, \mathrm dt\, \mathrm dx\right)^{1/2},
\end{equation}
we obtain an inequality for (\ref{int hat tilde}) of the same shape as (\ref{after I3 bound2}). A similar argument as in Case (i) gives that (for different constants) on an event of probability at least 
$1-q_{10}T\exp(-q_{11}T^{1/5})-\frac{q_{12}}T-q_{13}T\exp(-q_{14}T^{1/5})$, 
\begin{equation}
\frac1{T}\int_{(0,T)\times\mathscr X}(\lambda_{\hat \theta}(t,x)-\lambda_{\tilde \theta}(t,x))^2\, \mathrm dt\, \mathrm dx\leq c_{10} M^{-2d}H^{-d}+c_{11}\frac{\log(T)^2\mathscr S}{T}.
\label{case 3 bound}
\end{equation}
The theorem follows since we have \begin{equation*}\int_{(0,T)\times\mathscr X}(\lambda_{\hat \theta}(t,x)-\lambda(t,x))^2\, \mathrm dt\, \mathrm dx<\int_{(0,T)\times\mathscr X}(\lambda_{\tilde \theta}(t,x)-\lambda_{\hat \theta}(t,x))^2\, \mathrm dt\, \mathrm dx
\end{equation*}
under Case (iii).
\qed\newline

{\it Proof of Theorem~\ref{recovery thm}}.
We begin by constructing a solution to a restricted version of (\ref{lasso LS contrast}) that satisfies the corresponding KKT conditions. By construction, this solution ensures that $\mathcal E \supset \hat{\mathcal E}$. Next, we establish $\mathcal E \subset \hat{\mathcal E}$ by showing that $\Delta\beta$ is sufficiently small and invoking Assumption \ref{minimal signal}. Consequently, $\mathcal E = \hat{\mathcal E}$. Finally, we verify the strict dual feasibility of the constructed solution, thereby confirming that the restricted solution also solves the original problem, i.e., that we have successfully ``witnessed'' the solution.

\emph{Step 1}. We construct an estimator $\hat\theta=(\hat\alpha,\hat\beta,\hat\gamma)\in\mathbb R^{M}\times\mathbb R^{M\times M}\times\mathbb R^H$ with $\|\hat\gamma\|_{\ell^1}=1$ satisfying the KKT conditions for optimizing (\ref{lasso LS contrast}). As in (\ref{expansion basis finite}), let $h_\gamma=\sum_{m\in[H]}\gamma_m\chi_m$. By taking (weak) derivatives with respect to $\beta_{k,l}$ in (\ref{lasso LS contrast}), we are looking for a primal-dual pair $(\beta_{k,l},g_{k,l})_{k,l\in[M]}$ satisfying
\begin{align}
&-\frac1T\int_{(0,T)\times\mathscr X}\int_{(0,t)\times\mathscr X}\varphi_k(x)\varphi_l(y)h_{\hat\gamma}(t-s)\,N(\mathrm ds,\mathrm dy)\,N(\mathrm dt,\mathrm dx)\nonumber\\
&+\frac1T\int_{(0,T)\times\mathscr X}\int_{(0,t)\times\mathscr X}\varphi_k(x)\varphi_l(y)h_{\hat\gamma}(t-s)\,N(\mathrm ds,\mathrm dy)\,\lambda_{\hat\theta}(t,x)\,\mathrm dt\,\mathrm dx+\eta \hat g_{k,l}=0,\label{dual equation}
\end{align}
where \begin{align}
\hat g_{k,l}&= \mathrm{sign}(\hat\beta_{k,l}),&&\text{if }\hat\beta_{k,l}\neq0;\label{g nonzero}\\
|\hat g_{k,l}|&\leq1,&&\text{if }\hat\beta_{k,l}=0.\label{g zero}
\end{align}
Note that by  Assumption \ref{ass approx}, $\tilde\beta_{k,l}=0$ for $(k,l)\in\mathcal E^{\mathsf{c}}$. First, we construct a primal solution. Only for $(k,l)\in\mathcal E$, we allow $\hat\beta_{k,l}$ to be nonzero: i.e., we 
compute $\hat \beta_{|\mathcal E}$, the restriction of $\hat\beta$ to $\mathcal E$, as 
\begin{align}
&(\hat\alpha,(\hat\beta_{|\mathcal E},\hat\beta_{|\mathcal E^{\mathsf{c}}}),\hat\gamma)=\hat \theta\label{lasso LS contrast PDW}\\
&\in\argmin_{\substack{\theta\in\mathbb R^{M}\times(\mathbb R^{\mathcal E}\times\{0\}^{\mathcal E^{\mathsf{c}}})\times\mathbb R^H\\\|\gamma\|_{\ell^1}=1}}\left\{\frac1T\left(-\int_{(0,T)\times\mathscr X}\lambda_\theta(t,x)\,N(\mathrm dt,\mathrm dx)+\frac12\int_{(0,T)\times\mathscr X}\lambda_\theta(t,x)^2\,\mathrm dt\, \mathrm dx\right)+\eta\|\beta\otimes\gamma\|_{\ell^1}\right\}.\nonumber
\end{align} 

Next, we construct the corresponding dual solution $\hat g$: if $\hat\beta_{k,l}\neq0$, define $\hat g_{k,l}$ through (\ref{g nonzero}); if $\hat\beta_{k,l}=0$, define $\hat g_{k,l}$ by solving (\ref{dual equation}).

The primal-dual pair solves the restriction (\ref{lasso LS contrast PDW}) of (\ref{lasso LS contrast}). By construction, we have  $\mathcal E\supset\hat{\mathcal E}$. Hence, it remains to show that $\mathcal E\subset\hat{\mathcal E}$ and that $\hat\theta$ solves the original problem (\ref{lasso LS contrast}) as well.

\emph{Step 2}. We show that $\mathcal E\subset\hat{\mathcal E}$.
Let $\Delta (\beta\otimes\gamma)=\tilde\beta\otimes\tilde\gamma-\hat\beta\otimes\hat\gamma$.
By Assumption \ref{ass approx} and by Lemma \ref{integrated diff and SDP}, we can find $c_1,c_2,q_1,q_2,q_3>0$ such that with probability at least  $1-q_1T\exp(-q_2T^{1/5})-q_3/T$ it holds that 
\begin{align}
\|\Delta (\beta\otimes\gamma)\|_2^2&\leq\frac1{\sigma_{\min}(G)}\sum_{k\in[M]}\left(\Delta \theta^\otimes_{k}\right)^\top G\Delta \theta^\otimes_{k}\leq\frac1{\Sigma_{\min}}\frac1{T}\int_{(0,T)\times\mathscr X}(\lambda_{\tilde \theta}(t,x)-\lambda_{\hat \theta}(t,x))^2\,\mathrm dt\,\mathrm dx\nonumber\\
&\leq\frac2{\Sigma_{\min}}\left(\frac1{T}\int_{(0,T)\times\mathscr X}(\lambda_{\hat \theta}(t,x)-\lambda_{\theta}(t,x))^2\,\mathrm dt\,\mathrm dx+\frac1{T}\int_{(0,T)\times\mathscr X}(\lambda_{\tilde \theta}(t,x)-\lambda_{\theta}(t,x))^2\,\mathrm dt\,\mathrm dx\right)\nonumber\\
&\leq c_1M^{-2d}+c_2\frac{\log(T)^2M^2H}{T},\label{delta beta bound}
\end{align}
using $(x+y)^2\leq2x^2+2y^2$. The right-hand side of (\ref{delta beta bound}) tends to $0$ as $T\to\infty$. Hence, for $T$ sufficiently large, $\max_{k,l\in[M],m\in[H]}|\Delta(\beta\otimes\gamma)_{k,l,m}|\leq\beta_{\min}\gamma_{\max}/2$. Note that $\min_{(k,l)\in\mathcal E}|\tilde\beta_{k,l}|\geq\beta_{\min}$ and $\max_{m\in[H]}|\tilde\gamma_m|\geq\gamma_{\max}$ by Assumption \ref{minimal signal}. 
From this, we infer that for all $(k,l)\in\mathcal E$, there exists $m\in[H]$ such that $|\hat\beta_{k,l}\hat\gamma_m|\geq\beta_{\min}\gamma_{\max}/2>0$. In particular, $\hat\beta_{k,l}\neq0$ for all $(k,l)\in\mathcal E$. In other words, $\mathcal E\subset\hat{\mathcal E}$ on an event of probability at least  $1-q_1T\exp(-q_2T^{1/5})-q_3/T$.

\emph{Step 3}.
By construction, $\mathcal E\supset\hat{\mathcal E}$, hence $\mathcal E=\hat{\mathcal E}$ on an event of probability at least  $1-q_1T\exp(-q_2T^{1/5})-q_3/T$. It remains to show that the solution $\hat\theta$ to the restricted problem (\ref{lasso LS contrast PDW})  solves the original problem (\ref{lasso LS contrast}) as well. In order to show this, we verify the \emph{strict} dual feasibility: $|\hat g_{k,l}|<1$ for all $(k,l)\in\mathcal E^{\mathsf{c}}$.  To prove this condition, we expand (\ref{dual equation}) into
\begin{align}
&-\eta \hat g_{k,l}\label{dual nonactive}=\underbrace{\frac1T\int_{(0,T)\times\mathscr X}\varphi_k(x)\Psi_{l,\hat\gamma}(t)(\lambda_{\hat\theta}(t,x)-\lambda_{\tilde\theta}(t,x))\,\mathrm dt\,\mathrm dx}_{=:D_1}\\
&+\underbrace{\frac1T\int_{(0,T)\times\mathscr X}\varphi_k(x)\Psi_{l,\hat\gamma}(t)(\lambda_{\tilde\theta}(t,x)-\lambda(t,x))\,\mathrm dt\,\mathrm dx}_{=:D_2}+\underbrace{\frac1T\int_{(0,T)\times\mathscr X}\varphi_k(x)\Psi_{l,\hat\gamma}(t)(\lambda(t,x)\,\mathrm dt\,\mathrm dx-N(\mathrm dt,\mathrm dx))}_{=:D_3},\nonumber
\end{align}
where $\Psi_{l,\gamma}(t):=\int_{(0,t)\times\mathscr X}\varphi_l(y)h_\gamma(t-s)\,N(\mathrm ds,\mathrm dy)=\sum_{m\in[H]}\gamma_m\Psi_{l,m}$, where $\Psi_{l,m}$ is defined in (\ref{psi def}).
We show that $|\hat g_{k,l}|<1$ for $T$ sufficiently large by showing that $|D_i/\eta|\leq1/3$ for $i=1,2,3$.  

\emph{Step 3A}. We first consider $D_1$. As in the proof of Lemma \ref{integrated diff and SDP}, 
set $\pi:=\beta\otimes\gamma$, so $\Delta\pi:=\Delta (\beta\otimes\gamma)=\tilde\beta\otimes\tilde\gamma-\hat\beta\otimes\hat\gamma$.  
Then $\Delta\theta^\otimes_k=(\Delta\alpha_k,\Delta\pi_{k,\cdot,\cdot})$, where $\pi_{k,l,m}:=\beta_{k,l}\gamma_m$.
Note that we can write 
\begin{align}
\lambda_{\hat\theta}(t,x)-\lambda_{\tilde\theta}(t,x)&=\sum_{k'\in[M]}\Delta\alpha_{k'}\varphi_{k'}(x)+\sum_{\substack{k',l'\in[M]\\m'\in[H]}}\Delta\pi_{k',l',m'}\varphi_{k'}(x)\Psi_{l',m'}(t)\nonumber\\
&=\sum_{k'\in[M]}\Delta\alpha_{k'}\varphi_{k'}(x)+\sum_{\substack{(k',l')\in\mathcal E\\m'\in[H]}}\Delta\pi_{k',l',m'}\varphi_{k'}(x)\Psi_{l',m'}(t)\label{diff lambda phi psi}
\end{align}
since $\hat\beta_{k',l'}=\tilde\beta_{k',l'}=0$ for $(k',l')\in\mathcal E$. Plug (\ref{diff lambda phi psi}) into the expression for $D_1$, use $\langle\varphi_k,\varphi_k\rangle=1$, and apply Cauchy-Schwarz. This gives 
\begin{align}
D_1&=\sum_{m\in[H]}\hat\gamma_m\left(\Delta\alpha_kG_{(l,m),0}+\sum_{\substack{l'\in\mathcal E_k\\m'\in[H]}}\Delta\pi_{k,l',m'}G_{(l,m),(l',m')}\right)\nonumber\\
&\leq\sum_{m\in[H]}\hat\gamma_m\|\Delta\theta^\otimes_k\|_2\left(G_{(l,m),0}^2+\sum_{\substack{l'\in\mathcal E_k\\m'\in[H]}}G_{(l,m),(l',m')}^2\right)^{1/2}. 
\end{align}
We may bound $\|\Delta\theta^\otimes_k\|_2$ as in the proof of Theorem \ref{consistency thm}: in equation (\ref{Frob norm bound}), the bound can be improved by replacing $\mathscr S\leq M^2H$ in Lemma \ref{integrated diff and SDP}   by $\mathcal SH$. As in (\ref{delta beta bound}), we can find $c_3,c_4,q_4,q_5,q_6>0$ such that
\begin{equation}
\|\Delta\theta^\otimes_k\|_2\leq\left(c_3M^{-2d}+c_4\frac{\log(T)^2\mathcal SH}{T}\right)^{1/2}\leq\sqrt{c_3}M^{-d}+\sqrt{c_4}\frac{\log(T)\sqrt{\mathcal S}\sqrt H}{\sqrt T}
\end{equation}
on an event of probability at least
$1-q_4T\exp(-q_5T^{1/5})-q_6/T$. The second inequality follows from $\sqrt{x+y}\leq\sqrt x+\sqrt y$ for $x,y\geq0$.
Since $\|\hat\gamma\|_{\ell^1}$=1, 
we can bound 
\begin{equation}
D_1\leq \left(\sqrt{c_3}M^{-d}+\sqrt{c_4}\frac{\log(T)\sqrt{\mathcal S}\sqrt H}{\sqrt T}\right)\max_{\substack{l\in \mathcal E_k^{\mathsf{c}}\\ m\in[H]}}\left(G_{(l,m),0}^2+\sum_{l'\in \mathcal E_k}G_{(l,m),(l',m')}^2\right)^{1/2}.
\end{equation}
By an application of Assumption \ref{irrepresentable cond}, using $q_6$ from the previous bound, we can find $c_5,c_6,q_7,q_8>0$ such that \begin{equation}|D_1/\eta|\leq \left(c_5 M^{-d}/\sqrt{\mathcal S}+c_6\frac{\log(T)}{\sqrt T}\right)\Big/\eta\end{equation}
 on an event of probability at least $1-q_7T\exp(-q_8T^{1/5})-q_6/T$. 
Since we set $\sqrt T/(M^d\sqrt{\mathcal S}\log(T))=\mathcal O(1)$, it holds  for  sufficiently large $T$  that $|D_1/\eta|<1/3$ if we take $\eta:=c_7\log(T)/\sqrt T$ for $c_7>0$ sufficiently large. 

\emph{Step 3B}. We move on to $D_2$.
Let 
\begin{equation}
G_{(l,\hat\gamma),(l,\hat\gamma)}:=\sum_{m,m'\in[H]}\hat\gamma_m\hat\gamma_{m'}G_{(l,m),(l,m')}.
\end{equation}
 By Cauchy-Schwarz, Assumption \ref{ass approx} and since $\langle \varphi_k,\varphi_k\rangle=1$, it follows that 
\begin{equation}\label{first bound D2}
D_2\leq\sqrt{G_{(l,\hat\gamma),(l,\hat\gamma)}}\left(\frac1T\int_{(0,T)\times\mathscr X}(\lambda_{\tilde \theta}(t,x)-\lambda(t,x))^2\,\mathrm dt\,\mathrm dx\right)^{1/2}\leq c_8\sqrt{G_{(l,\hat\gamma),(l,\hat\gamma)}}M^{-d}
\end{equation}
with probability at least  $1-q_9T\exp(-q_{10}T^{1/5})$, for some constants $q_9,q_{10}>0$. We give a probabilistic upper on bound $G_{(l,m),(l,m')}$, indendent of $m,m'\in[H]$. First, we bound
\begin{align}
&\mathbb E[G_{(l,\hat\gamma),(l,\hat\gamma)}]\nonumber\\
&\leq\frac1T\int_{(0,T)\times\mathscr X}\int_{(0,t)\times\mathscr X}\int_{(0,t)\times\mathscr X}\varphi_l(y_1)\varphi_l(y_2)h_{\hat\gamma}(t-s_1)h_{\hat\gamma}(t-s_2)\,\mathbb E[N(\mathrm ds_1,\mathrm dy_1)N(\mathrm ds_2,\mathrm dy_2)]\,\mathrm dt\nonumber\\
&\leq c_9\frac1T\int_{(0,T)\times\mathscr X}\int_{(0,t)\times\mathscr X}\int_{(0,t)\times\mathscr X}\varphi_l(y_1)\varphi_l(y_2)h_{\hat\gamma}(t-s_1)h_{\hat\gamma}(t-s_2)\,\mathrm ds_1\,\mathrm dy_1\,\mathrm ds_2\,\mathrm dy_2\,\mathrm dt\nonumber\\
&\leq c_9 \mathfrak h_1^2\langle \varphi_l,1\rangle^2.\label{EG_ll upper}
\end{align}
where the second inequality holds for some constant $c_9>0$, by Lemma \ref{step3Alemma} from Appendix~\hyperref[app A]{A}, and where the third inequality uses that $\|\hat\gamma\|_{\ell^1}=1$. Note that $\mu(\mathscr X)<\infty$, hence $1\in L^2(\mathscr X)$. Hence, by Bessel's inequality we infer that $\langle \varphi_l,1\rangle$ is uniformly bounded over $l\in\mathbb N$. Hence, we can find a constant $c_{10}>0$ such that $\mathbb E[G_{(l,\hat\gamma),(l,\hat\gamma)}]\leq c_{10}$ for all $l\in\mathbb N$. 

Next, we apply Lemma \ref{119120 lemma} from Appendix~\hyperref[app A]{A} with $\Delta^{(MH)}=e_l^{(M)}\otimes\hat\gamma$, where $e_l^{(M)}$ is the $l$'th basis unit vector in $\mathbb R^{M}$. Note that since $|\hat\gamma_m|\leq1$ for each $m\in[H]$,  $\|\Delta^{(MH)}\|_{\ell^2}^2=\|\hat\gamma\|_2\leq\|\hat\gamma\|_1=1$. This implies that we can find constants $q_{11},q_{12}>0$ such that for $c_{11}=c_{10}/2$ and for $T$ sufficiently large, $ G_{(l,\hat\gamma),(l,\hat\gamma)}\leq c_{11}$ on an event of probability at least $1-q_{11}T\exp(-q_{12}T^{1/5})$. Hence, for $c_{12}:=c_8\sqrt{c_{11}}$, we can find $q_{13},q_{14}>0$ such that
\begin{equation}\label{second bound D2}
D_2\leq c_{12} M^{-d}
\end{equation}
on an event of probability at least $1-q_{13}T\exp(-q_{14}T^{1/5})$. Since $\sqrt T/(M^d\log(T))=\mathcal O(1)$ for  sufficiently large $T$ it holds that $|D_2/\eta|<1/3$ if we take $\eta:=c_7\log(T)/\sqrt T$ for  sufficiently large $c_7>0$. 

\emph{Step 3C}. Finally, to obtain a bound for $D_3$, we employ a truncation argument together with an exponential inequality for martingales, conditional on the event $A_{0,T;B}$ defined in (\ref{A event eq}). This approach parallels the one used to bound $I_2$ in the proof of Theorem \ref{consistency thm}, and is needed here as well since we work with unbounded martingales, while the exponential inequality assumes a bounded martingale. To avoid repeating the entire argument, we only sketch the estimates for the relevant quantities.

Take $B=2\log(T)/q_2^*$, as before, where $q_2^*>0$ is the constant defined in the proof of Theorem \ref{consistency thm}. Consider the martingale 
\begin{equation}\label{martingale Y2_T}
Y_T:=\int_{(0,T)\times\mathscr X}\varphi_k(x)\Psi_{l,\hat\gamma}(t)(\lambda(t,x)\,\mathrm dt\,\mathrm dx-N(\mathrm dt,\mathrm dx))
\end{equation}
having predictable quadratic variation
\begin{equation}\label{martingale Y2_T QV}
\langle Y,Y \rangle_T= \int_{(0,T)\times\mathscr X} (\varphi_k(x)\Psi_{l,\hat\gamma}(t))^2 \lambda(t,x)\, \mathrm dt \,\mathrm dx.
\end{equation}
Let $\tau$ be as in (\ref{stopping time}), and consider the stopped martingale $X_t=Y_{t\wedge\tau}$. As in the proof of Theorem \ref{consistency thm}, on $A_{0,T;B}$ there are at most $B$ events in any interval $[t-b,t]$, for $t\in[0,T]$. Assumptions \ref{stability ass} and  \ref{min eigenvalue ass} give that 
\begin{equation}|\Delta X_t|\leq c_{13}M\log(T)\end{equation} for some $c_{13}>0$, where we use that $\|\hat\gamma\|_{\ell^1}=1$, which holds by construction. Furthermore, in the bounding of $D_2$ we found that on an event of probability at least $1-q_{13}T\exp(-q_{14}T^{1/5})$ it holds that 
\begin{equation}\int_{(0,T)}\Psi_{l,\hat\gamma}(t)^2\,\mathrm dt=TG_{(l,\hat\gamma),(l,\hat\gamma)}\leq T c_{11},\end{equation} while $\langle\varphi_k,\varphi_k\rangle=1$. This gives the  bound  \begin{equation}\langle X,X \rangle_T\leq c_{14}T\log(T)\end{equation} for some $c_{14}>0$. We apply inequality (\ref{lemma2.1 ineq}) with $a=\sqrt{c_{14}}\sqrt T\log(T)$. Using (\ref{P3 bound}), this implies that we can find $q_{15},q_{16},q_{17}>0$ such that $D_3\leq \log T/\sqrt T$ on an event of probability at least $1-q_{15}T\exp(-q_{16}T^{1/5})-q_{17}/T$. Therefore, for  sufficiently large $T$ it holds that $|D_3/\eta|<1/3$ if we take $\eta:=c_7\log(T)/\sqrt T$ for $c_7>0$ sufficiently large. 

Combining the bounds for $D_1,D_2$ and $D_3$, it follows that $|\hat g_{k,l}|<1$ for $T$ sufficiently large, if we take $\eta:=c_7\log(T)/\sqrt T$ for $c_7>0$ sufficiently large. Since the bounds for $D_1,D_2$ and $D_3$ do not depend on $(k,l)\in\mathcal E$, the theorem follows.
\qed

\section*{Appendix C: Relegated simulation results of Section~\ref{simulation study section}}\label{app C}
In seven different setups, we sampled $100$ realization of a graphon Hawkes process on the time interval $[0,5\cdot10^5]$, to which we applied our estimation procedure. Here, we report mean and standard deviation (between brackets) of the estimated coefficients.

For the first unperturbed example, we obtain
\small
\begin{equation}
\begin{array}{c}\hat\beta\\(\mathrm{sd}(\hat\beta))\end{array} 
=
\renewcommand{\arraystretch}{1.3}
\setlength{\arraycolsep}{5pt}
\begin{pmatrix}
\substack{.7421\\[2pt](.0394)} &
\substack{.2044\\[2pt](.0146)} &
\substack{.0000\\[2pt](.0002)} &
\substack{-.0000\\[2pt](.0003)} &
\substack{.0002\\[2pt](.0011)} &
\substack{.0000\\[2pt](.0004)} &
\substack{-.0004\\[2pt](.0047)} &
\substack{-.0001\\[2pt](.0009)}
\\
\substack{-.0001\\[2pt](.0006)} &
\substack{-.0000\\[2pt](.0003)} &
\substack{-.0000\\[2pt](.0000)} &
\substack{-.0000\\[2pt](.0000)} &
\substack{-.0000\\[2pt](.0007)} &
\substack{.0001\\[2pt](.0014)} &
\substack{.0000\\[2pt](.0002)} &
\substack{-.0000\\[2pt](.0002)}
\\
\substack{.0000\\[2pt](.0006)} &
\substack{.0000\\[2pt](.0005)} &
\substack{-.0000\\[2pt](.0001)} &
\substack{.0000\\[2pt](.0002)} &
\substack{.0000\\[2pt](.0007)} &
\substack{-.0001\\[2pt](.0015)} &
\substack{-.0001\\[2pt](.0004)} &
\substack{-.0002\\[2pt](.0026)}
\\
\substack{-.0001\\[2pt](.0003)} &
\substack{.0001\\[2pt](.0013)} &
\substack{.0001\\[2pt](.0004)} &
\substack{-.0001\\[2pt](.0009)} &
\substack{-.0002\\[2pt](.0018)} &
\substack{-.0001\\[2pt](.0007)} &
\substack{-.0001\\[2pt](.0010)} &
\substack{-.0000\\[2pt](.0004)}
\\
\substack{.0001\\[2pt](.0006)} &
\substack{.0001\\[2pt](.0004)} &
\substack{-.0000\\[2pt](.0002)} &
\substack{-.0002\\[2pt](.0009)} &
\substack{.0001\\[2pt](.0009)} &
\substack{.0001\\[2pt](.0009)} &
\substack{.0001\\[2pt](.0016)} &
\substack{.0000\\[2pt](.0003)}
\\
\substack{.0000\\[2pt](.0002)} &
\substack{-.0000\\[2pt](.0004)} &
\substack{.0980\\[2pt](.0136)} &
\substack{.0000\\[2pt](.0003)} &
\substack{.0000\\[2pt](.0002)} &
\substack{.0000\\[2pt](.0001)} &
\substack{-.0001\\[2pt](.0012)} &
\substack{.0000\\[2pt](.0003)}
\\
\substack{.0002\\[2pt](.0010)} &
\substack{-.0000\\[2pt](.0000)} &
\substack{.0000\\[2pt](.0005)} &
\substack{-.0002\\[2pt](.0024)} &
\substack{-.0001\\[2pt](.0008)} &
\substack{.0001\\[2pt](.0005)} &
\substack{.0000\\[2pt](.0014)} &
\substack{.0001\\[2pt](.0007)}
\\
\substack{.0000\\[2pt](.0002)} &
\substack{-.0000\\[2pt](.0001)} &
\substack{-.1001\\[2pt](.0134)} &
\substack{.0000\\[2pt](.0002)} &
\substack{.0000\\[2pt](.0002)} &
\substack{.0001\\[2pt](.0009)} &
\substack{.0000\\[2pt](.0002)} &
\substack{.0000\\[2pt](.0004)}
\end{pmatrix}
\end{equation}
\normalsize

For the second unperturbed example, we obtain
\small
\begin{equation}
\begin{array}{c}\hat\beta\\(\mathrm{sd}(\hat\beta))\end{array} 
=
\renewcommand{\arraystretch}{1.3}
\setlength{\arraycolsep}{5pt}
\begin{pmatrix}
\substack{.6810\\[2pt](.0300)} &
\substack{-.0000\\[2pt](.0004)} &
\substack{-.0000\\[2pt](.0000)} &
\substack{.1949\\[2pt](.0143)} &
\substack{-.0000\\[2pt](.0003)} &
\substack{-.0001\\[2pt](.0011)} &
\substack{.0002\\[2pt](.0027)} &
\substack{-.0002\\[2pt](.0030)}
\\
\substack{-.0000\\[2pt](.0002)} &
\substack{-.0000\\[2pt](.0002)} &
\substack{-.0005\\[2pt](.0046)} &
\substack{.0000\\[2pt](.0022)} &
\substack{.0000\\[2pt](.0012)} &
\substack{.0000\\[2pt](.0009)} &
\substack{-.0000\\[2pt](.0002)} &
\substack{-.0000\\[2pt](.0002)}
\\
\substack{-.0000\\[2pt](.0004)} &
\substack{.0001\\[2pt](.0006)} &
\substack{-.0001\\[2pt](.0010)} &
\substack{.0001\\[2pt](.0005)} &
\substack{-.0003\\[2pt](.0026)} &
\substack{-.0001\\[2pt](.0011)} &
\substack{.0000\\[2pt](.0004)} &
\substack{.0001\\[2pt](.0009)}
\\
\substack{-.0000\\[2pt](.0001)} &
\substack{.0000\\[2pt](.0019)} &
\substack{.0000\\[2pt](.0001)} &
\substack{.0000\\[2pt](.0001)} &
\substack{-.0001\\[2pt](.0005)} &
\substack{-.0000\\[2pt](.0001)} &
\substack{-.0002\\[2pt](.0015)} &
\substack{.0000\\[2pt](.0001)}
\\
\substack{.0001\\[2pt](.0005)} &
\substack{-.1098\\[2pt](.0140)} &
\substack{-.0000\\[2pt](.0004)} &
\substack{.0002\\[2pt](.0016)} &
\substack{-.0001\\[2pt](.0009)} &
\substack{.0005\\[2pt](.0045)} &
\substack{-.0001\\[2pt](.0011)} &
\substack{.0001\\[2pt](.0007)}
\\
\substack{.0001\\[2pt](.0005)} &
\substack{-.1061\\[2pt](.0153)} &
\substack{-.0002\\[2pt](.0027)} &
\substack{.0000\\[2pt](.0002)} &
\substack{.0001\\[2pt](.0035)} &
\substack{.0004\\[2pt](.0023)} &
\substack{.0001\\[2pt](.0008)} &
\substack{.0000\\[2pt](.0003)}
\\
\substack{.0003\\[2pt](.0009)} &
\substack{-.0996\\[2pt](.0127)} &
\substack{-.0004\\[2pt](.0041)} &
\substack{.0002\\[2pt](.0010)} &
\substack{-.0004\\[2pt](.0020)} &
\substack{.0000\\[2pt](.0002)} &
\substack{.0001\\[2pt](.0009)} &
\substack{-.0008\\[2pt](.0060)}
\\
\substack{.0002\\[2pt](.0006)} &
\substack{-.1010\\[2pt](.0156)} &
\substack{.0000\\[2pt](.0002)} &
\substack{.0003\\[2pt](.0021)} &
\substack{-.0001\\[2pt](.0008)} &
\substack{.0002\\[2pt](.0014)} &
\substack{.0000\\[2pt](.0000)} &
\substack{-.0000\\[2pt](.0000)}
\end{pmatrix}
\end{equation}
\normalsize

For the third unperturbed example, we obtain
\small
\begin{equation}
\begin{array}{c}\hat\beta\\(\mathrm{sd}(\hat\beta))\end{array} 
=
\renewcommand{\arraystretch}{1.3}
\setlength{\arraycolsep}{5pt}
\begin{pmatrix}
\substack{.7290\\[2pt](.0143)} &
\substack{.0001\\[2pt](.0010)} &
\substack{-.0001\\[2pt](.0011)} &
\substack{.1231\\[2pt](.0132)} &
\substack{.0001\\[2pt](.0021)} &
\substack{.0002\\[2pt](.0010)} &
\substack{.0001\\[2pt](.0004)} &
\substack{.0001\\[2pt](.0017)}
\\
\substack{-.0001\\[2pt](.0008)} &
\substack{.0003\\[2pt](.0018)} &
\substack{.0001\\[2pt](.0007)} &
\substack{.0002\\[2pt](.0009)} &
\substack{-.0000\\[2pt](.0007)} &
\substack{.0001\\[2pt](.0007)} &
\substack{.0001\\[2pt](.0009)} &
\substack{.0000\\[2pt](.0005)}
\\
\substack{.0001\\[2pt](.0007)} &
\substack{-.0002\\[2pt](.0025)} &
\substack{-.0001\\[2pt](.0010)} &
\substack{.0002\\[2pt](.0022)} &
\substack{-.0001\\[2pt](.0014)} &
\substack{.0002\\[2pt](.0017)} &
\substack{-.0001\\[2pt](.0009)} &
\substack{.0000\\[2pt](.0005)}
\\
\substack{-.0000\\[2pt](.0004)} &
\substack{.0001\\[2pt](.0006)} &
\substack{-.0000\\[2pt](.0009)} &
\substack{-.0000\\[2pt](.0017)} &
\substack{-.0003\\[2pt](.0023)} &
\substack{.0000\\[2pt](.0010)} &
\substack{-.0001\\[2pt](.0007)} &
\substack{.0000\\[2pt](.0006)}
\\
\substack{.0002\\[2pt](.0007)} &
\substack{-.1027\\[2pt](.0118)} &
\substack{-.0002\\[2pt](.0018)} &
\substack{-.0000\\[2pt](.0006)} &
\substack{.0341\\[2pt](.0111)} &
\substack{.0008\\[2pt](.0062)} &
\substack{.0001\\[2pt](.0012)} &
\substack{.0002\\[2pt](.0012)}
\\
\substack{.0000\\[2pt](.0006)} &
\substack{-.0975\\[2pt](.0131)} &
\substack{-.0001\\[2pt](.0006)} &
\substack{.0000\\[2pt](.0006)} &
\substack{-.0001\\[2pt](.0008)} &
\substack{.0002\\[2pt](.0008)} &
\substack{.0001\\[2pt](.0005)} &
\substack{.0001\\[2pt](.0013)}
\\
\substack{.0007\\[2pt](.0014)} &
\substack{-.0984\\[2pt](.0150)} &
\substack{.0001\\[2pt](.0010)} &
\substack{-.0001\\[2pt](.0010)} &
\substack{-.0002\\[2pt](.0012)} &
\substack{.0003\\[2pt](.0018)} &
\substack{.0323\\[2pt](.0113)} &
\substack{.0001\\[2pt](.0008)}
\\
\substack{.0002\\[2pt](.0009)} &
\substack{-.0942\\[2pt](.0158)} &
\substack{.0001\\[2pt](.0008)} &
\substack{-.0001\\[2pt](.0010)} &
\substack{-.0001\\[2pt](.0009)} &
\substack{-.0001\\[2pt](.0014)} &
\substack{-.0000\\[2pt](.0010)} &
\substack{.0001\\[2pt](.0005)}
\end{pmatrix}
\end{equation}
\normalsize

For the fourth unperturbed example, we obtain
\small
\begin{equation}
\begin{array}{c}\hat\beta\\(\mathrm{sd}(\hat\beta))\end{array} 
=
\renewcommand{\arraystretch}{1.3}
\setlength{\arraycolsep}{5pt}
\begin{pmatrix}
\substack{.7508\\[2pt](.0383)} &
\substack{.0355\\[2pt](.0090)} &
\substack{-.0360\\[2pt](.0078)} &
\substack{.0347\\[2pt](.0097)} &
\substack{.0000\\[2pt](.0007)} &
\substack{-.0001\\[2pt](.0013)} &
\substack{-.0000\\[2pt](.0014)} &
\substack{-.0001\\[2pt](.0010)}
\\
\substack{.0001\\[2pt](.0012)} &
\substack{-.0001\\[2pt](.0015)} &
\substack{-.0004\\[2pt](.0036)} &
\substack{.0003\\[2pt](.0019)} &
\substack{.0004\\[2pt](.0043)} &
\substack{.0001\\[2pt](.0008)} &
\substack{-.0002\\[2pt](.0017)} &
\substack{-.0000\\[2pt](.0012)}
\\
\substack{.0000\\[2pt](.0004)} &
\substack{-.0001\\[2pt](.0007)} &
\substack{.0006\\[2pt](.0050)} &
\substack{-.0000\\[2pt](.0006)} &
\substack{-.0001\\[2pt](.0010)} &
\substack{.0002\\[2pt](.0017)} &
\substack{.0000\\[2pt](.0005)} &
\substack{.0002\\[2pt](.0012)}
\\
\substack{-.0000\\[2pt](.0004)} &
\substack{-.0001\\[2pt](.0011)} &
\substack{.0001\\[2pt](.0004)} &
\substack{-.0001\\[2pt](.0011)} &
\substack{.0003\\[2pt](.0030)} &
\substack{-.0000\\[2pt](.0010)} &
\substack{.0006\\[2pt](.0047)} &
\substack{-.0001\\[2pt](.0007)}
\\
\substack{.0001\\[2pt](.0008)} &
\substack{-.0337\\[2pt](.0093)} &
\substack{-.0297\\[2pt](.0093)} &
\substack{-.0001\\[2pt](.0009)} &
\substack{-.0118\\[2pt](.0097)} &
\substack{.0339\\[2pt](.0087)} &
\substack{.0260\\[2pt](.0101)} &
\substack{.0170\\[2pt](.0087)}
\\
\substack{.0001\\[2pt](.0009)} &
\substack{-.0298\\[2pt](.0090)} &
\substack{-.0275\\[2pt](.0101)} &
\substack{-.0000\\[2pt](.0014)} &
\substack{-.0133\\[2pt](.0095)} &
\substack{.0391\\[2pt](.0096)} &
\substack{.0222\\[2pt](.0085)} &
\substack{.0199\\[2pt](.0086)}
\\
\substack{.0001\\[2pt](.0005)} &
\substack{-.0181\\[2pt](.0073)} &
\substack{-.0243\\[2pt](.0088)} &
\substack{.0001\\[2pt](.0006)} &
\substack{.0217\\[2pt](.0092)} &
\substack{-.0310\\[2pt](.0106)} &
\substack{-.0223\\[2pt](.0082)} &
\substack{-.0188\\[2pt](.0087)}
\\
\substack{-.0000\\[2pt](.0008)} &
\substack{-.0191\\[2pt](.0090)} &
\substack{-.0234\\[2pt](.0093)} &
\substack{.0000\\[2pt](.0004)} &
\substack{.0225\\[2pt](.0104)} &
\substack{-.0282\\[2pt](.0076)} &
\substack{-.0235\\[2pt](.0090)} &
\substack{-.0188\\[2pt](.0112)}
\end{pmatrix}
\end{equation}
\normalsize

For the first perturbed example, we obtain
\small
\begin{equation}
\begin{array}{c}\hat\beta\\(\mathrm{sd}(\hat\beta))\end{array} 
=
\renewcommand{\arraystretch}{1.3}
\setlength{\arraycolsep}{5pt}
\begin{pmatrix}
\substack{.7511\\[2pt](.0555)} &
\substack{.2083\\[2pt](.0150)} &
\substack{.0001\\[2pt](.0008)} &
\substack{.0000\\[2pt](.0006)} &
\substack{.0000\\[2pt](.0027)} &
\substack{-.0001\\[2pt](.0013)} &
\substack{.0009\\[2pt](.0053)} &
\substack{.0001\\[2pt](.0012)}
\\
\substack{-.0004\\[2pt](.0010)} &
\substack{.0001\\[2pt](.0026)} &
\substack{-.0002\\[2pt](.0055)} &
\substack{-.0011\\[2pt](.0041)} &
\substack{-.0005\\[2pt](.0024)} &
\substack{-.0003\\[2pt](.0012)} &
\substack{-.0008\\[2pt](.0023)} &
\substack{-.0003\\[2pt](.0008)}
\\
\substack{.0000\\[2pt](.0011)} &
\substack{.0003\\[2pt](.0016)} &
\substack{-.0002\\[2pt](.0019)} &
\substack{-.0001\\[2pt](.0013)} &
\substack{.0004\\[2pt](.0022)} &
\substack{.0001\\[2pt](.0011)} &
\substack{.0001\\[2pt](.0012)} &
\substack{.0002\\[2pt](.0019)}
\\
\substack{-.0010\\[2pt](.0028)} &
\substack{.0002\\[2pt](.0029)} &
\substack{-.0003\\[2pt](.0013)} &
\substack{-.0005\\[2pt](.0019)} &
\substack{-.0010\\[2pt](.0044)} &
\substack{-.0003\\[2pt](.0015)} &
\substack{-.0002\\[2pt](.0008)} &
\substack{-.0006\\[2pt](.0037)}
\\
\substack{.0003\\[2pt](.0017)} &
\substack{.0001\\[2pt](.0008)} &
\substack{.0002\\[2pt](.0022)} &
\substack{-.0001\\[2pt](.0015)} &
\substack{-.0002\\[2pt](.0021)} &
\substack{-.0003\\[2pt](.0034)} &
\substack{.0003\\[2pt](.0018)} &
\substack{-.0003\\[2pt](.0029)}
\\
\substack{-.0006\\[2pt](.0016)} &
\substack{-.0004\\[2pt](.0017)} &
\substack{.0746\\[2pt](.0123)} &
\substack{-.0007\\[2pt](.0037)} &
\substack{-.0017\\[2pt](.0047)} &
\substack{-.0001\\[2pt](.0031)} &
\substack{-.0009\\[2pt](.0030)} &
\substack{-.0007\\[2pt](.0034)}
\\
\substack{.0016\\[2pt](.0027)} &
\substack{-.0001\\[2pt](.0041)} &
\substack{.0016\\[2pt](.0054)} &
\substack{.0001\\[2pt](.0016)} &
\substack{.0007\\[2pt](.0022)} &
\substack{.0004\\[2pt](.0026)} &
\substack{-.0000\\[2pt](.0027)} &
\substack{.0001\\[2pt](.0015)}
\\
\substack{-.0005\\[2pt](.0017)} &
\substack{-.0004\\[2pt](.0018)} &
\substack{-.1027\\[2pt](.0135)} &
\substack{-.0005\\[2pt](.0024)} &
\substack{-.0007\\[2pt](.0026)} &
\substack{-.0002\\[2pt](.0036)} &
\substack{-.0023\\[2pt](.0048)} &
\substack{-.0004\\[2pt](.0035)}
\end{pmatrix}
\end{equation}
\normalsize

For the second perturbed example, we obtain
\small
\begin{equation}
\begin{array}{c}\hat\beta\\(\mathrm{sd}(\hat\beta))\end{array} 
=
\renewcommand{\arraystretch}{1.3}
\setlength{\arraycolsep}{5pt}
\begin{pmatrix}
\substack{.7426\\[2pt](.0420)} &
\substack{.2060\\[2pt](.0172)} &
\substack{-.0002\\[2pt](.0032)} &
\substack{-.0000\\[2pt](.0004)} &
\substack{-.0000\\[2pt](.0002)} &
\substack{-.0000\\[2pt](.0006)} &
\substack{-.0000\\[2pt](.0003)} &
\substack{-.0001\\[2pt](.0007)}
\\
\substack{-.0001\\[2pt](.0003)} &
\substack{.0000\\[2pt](.0003)} &
\substack{-.0000\\[2pt](.0001)} &
\substack{.0001\\[2pt](.0008)} &
\substack{.0000\\[2pt](.0006)} &
\substack{.0001\\[2pt](.0007)} &
\substack{.0003\\[2pt](.0019)} &
\substack{.0001\\[2pt](.0007)}
\\
\substack{.0001\\[2pt](.0004)} &
\substack{-.0000\\[2pt](.0002)} &
\substack{-.0001\\[2pt](.0006)} &
\substack{-.0001\\[2pt](.0006)} &
\substack{-.0001\\[2pt](.0005)} &
\substack{.0000\\[2pt](.0004)} &
\substack{-.0003\\[2pt](.0015)} &
\substack{-.0002\\[2pt](.0015)}
\\
\substack{-.0001\\[2pt](.0004)} &
\substack{-.0003\\[2pt](.0041)} &
\substack{-.0000\\[2pt](.0000)} &
\substack{-.0002\\[2pt](.0028)} &
\substack{.0001\\[2pt](.0015)} &
\substack{-.0002\\[2pt](.0018)} &
\substack{.0001\\[2pt](.0014)} &
\substack{-.0000\\[2pt](.0003)}
\\
\substack{.0004\\[2pt](.0009)} &
\substack{-.0001\\[2pt](.0008)} &
\substack{.0001\\[2pt](.0013)} &
\substack{.0001\\[2pt](.0020)} &
\substack{.0001\\[2pt](.0024)} &
\substack{-.0008\\[2pt](.0038)} &
\substack{-.0001\\[2pt](.0006)} &
\substack{.0000\\[2pt](.0003)}
\\
\substack{.0001\\[2pt](.0006)} &
\substack{-.0000\\[2pt](.0005)} &
\substack{.1028\\[2pt](.0151)} &
\substack{-.0002\\[2pt](.0007)} &
\substack{.0000\\[2pt](.0000)} &
\substack{-.0001\\[2pt](.0004)} &
\substack{.0001\\[2pt](.0015)} &
\substack{.0001\\[2pt](.0010)}
\\
\substack{.0002\\[2pt](.0006)} &
\substack{-.0000\\[2pt](.0001)} &
\substack{.0002\\[2pt](.0015)} &
\substack{.0001\\[2pt](.0024)} &
\substack{-.0001\\[2pt](.0006)} &
\substack{-.0000\\[2pt](.0014)} &
\substack{-.0001\\[2pt](.0009)} &
\substack{.0000\\[2pt](.0002)}
\\
\substack{.0002\\[2pt](.0005)} &
\substack{-.0006\\[2pt](.0019)} &
\substack{-.0833\\[2pt](.0117)} &
\substack{-.0030\\[2pt](.0042)} &
\substack{.0002\\[2pt](.0012)} &
\substack{-.0023\\[2pt](.0034)} &
\substack{-.0009\\[2pt](.0030)} &
\substack{.0001\\[2pt](.0007)}
\end{pmatrix}
\end{equation}
\normalsize

For the third perturbed example, we obtain
\small
\begin{equation}
\begin{array}{c}\hat\beta\\(\mathrm{sd}(\hat\beta))\end{array} 
=
\renewcommand{\arraystretch}{1.3}
\setlength{\arraycolsep}{5pt}
\begin{pmatrix}
\substack{.7358\\[2pt](.0252)} &
\substack{.2034\\[2pt](.0126)} &
\substack{.0002\\[2pt](.0018)} &
\substack{.0002\\[2pt](.0019)} &
\substack{-.0000\\[2pt](.0014)} &
\substack{.0001\\[2pt](.0009)} &
\substack{-.0000\\[2pt](.0005)} &
\substack{.0001\\[2pt](.0031)}
\\
\substack{-.0001\\[2pt](.0008)} &
\substack{.0001\\[2pt](.0010)} &
\substack{-.0001\\[2pt](.0014)} &
\substack{-.0000\\[2pt](.0007)} &
\substack{.0001\\[2pt](.0006)} &
\substack{.0002\\[2pt](.0016)} &
\substack{.0003\\[2pt](.0015)} &
\substack{.0000\\[2pt](.0011)}
\\
\substack{.0002\\[2pt](.0010)} &
\substack{-.0001\\[2pt](.0010)} &
\substack{.0002\\[2pt](.0021)} &
\substack{-.0002\\[2pt](.0023)} &
\substack{.0003\\[2pt](.0016)} &
\substack{-.0001\\[2pt](.0006)} &
\substack{-.0002\\[2pt](.0013)} &
\substack{.0001\\[2pt](.0030)}
\\
\substack{-.0001\\[2pt](.0007)} &
\substack{.0003\\[2pt](.0028)} &
\substack{.0001\\[2pt](.0020)} &
\substack{.0002\\[2pt](.0017)} &
\substack{.0003\\[2pt](.0024)} &
\substack{.0001\\[2pt](.0007)} &
\substack{.0002\\[2pt](.0012)} &
\substack{-.0000\\[2pt](.0003)}
\\
\substack{.0003\\[2pt](.0012)} &
\substack{.0002\\[2pt](.0023)} &
\substack{-.0002\\[2pt](.0013)} &
\substack{.0000\\[2pt](.0007)} &
\substack{-.0002\\[2pt](.0010)} &
\substack{-.0000\\[2pt](.0005)} &
\substack{.0002\\[2pt](.0012)} &
\substack{-.0001\\[2pt](.0007)}
\\
\substack{.0001\\[2pt](.0008)} &
\substack{-.0003\\[2pt](.0018)} &
\substack{.0969\\[2pt](.0147)} &
\substack{.0002\\[2pt](.0014)} &
\substack{.0001\\[2pt](.0012)} &
\substack{-.0001\\[2pt](.0012)} &
\substack{-.0000\\[2pt](.0005)} &
\substack{.0000\\[2pt](.0009)}
\\
\substack{.0003\\[2pt](.0009)} &
\substack{-.0000\\[2pt](.0010)} &
\substack{.0004\\[2pt](.0025)} &
\substack{-.0003\\[2pt](.0017)} &
\substack{-.0001\\[2pt](.0007)} &
\substack{.0000\\[2pt](.0005)} &
\substack{.0000\\[2pt](.0009)} &
\substack{.0002\\[2pt](.0020)}
\\
\substack{-.0000\\[2pt](.0002)} &
\substack{-.0001\\[2pt](.0006)} &
\substack{-.0966\\[2pt](.0119)} &
\substack{-.0001\\[2pt](.0007)} &
\substack{-.0002\\[2pt](.0012)} &
\substack{-.0002\\[2pt](.0014)} &
\substack{-.0001\\[2pt](.0020)} &
\substack{.0002\\[2pt](.0013)}
\end{pmatrix}
\end{equation}
\normalsize

\end{document}